\newtheorem{theorem}{Theorem}[section]
\newtheorem{lemma}[theorem]{Lemma}
\newtheorem{proposition}[theorem]{Proposition}
\newtheorem{corollary}[theorem]{Corollary}
\newtheorem{remark}[theorem]{Remark}
\begin{document}

\title[Kirchhoff-Schr\"odinger equation with exponential critical growth]{Kirchhoff-Schr\"odinger equations in $\mathbb{R}^2$ with critical exponential growth and indefinite potential}

\author[M.F. Furtado]{Marcelo F. Furtado}
 \address{Universidade de Brasília, Departamento de Matem\'atica}
 \email{mfurtado@unb.br}

\author[H.R. Zanata]{Henrique R. Zanata}
 \address{Universidade Federal Rural do Semi-árido, Campus Caraúbas-RN, Brazil}
 \email{zanata@ufersa.edu.br}

 \keywords{Kirchhoff equation; Schr\"odinger equation; Trudinger-Moser inequality; indefinite potential}
 \subjclass{Primary 35J60; Secondary 35J50}

\begin{abstract}
We obtain the existence of ground state solution for the nonlocal problem
$$
m\left(\int_{\mathbb{R}^2}(|\nabla u|^2 + b(x)u^2) \textrm{d}x\right)(-\Delta u + b(x)u) = A(x)f(u) \ \ \ \textrm{in} \ \ \ \mathbb{R}^2, 
$$
where $m$ is a Kirchoff-type function, $b$ may be negative and noncoercive, $A$ is locally bounded and the function $f$ has critical exponential growth. We also obtain new results for the  classical Schr\"odinger equation, namely the local case $m\equiv 1$. In the proofs we apply Variational Methods beside a new Trudinger-Moser type inequality.
\end{abstract}

\maketitle

\section{Introduction} 

We study the problem
$$
m\left(\int_{\mathbb{R}^2}(|\nabla u|^2 + b(x)u^2) \textrm{d}x\right)(-\Delta u + b(x)u) = A(x)f(u) \ \ \ \textrm{in} \ \ \ \mathbb{R}^2, \leqno{(P)}
$$
where $m : [0,\infty) \rightarrow (0,\infty)$ and $f : \mathbb{R} \rightarrow [0,\infty)$ are continuous functions and $b,A \in L_{\textrm{loc}}^{\infty}(\mathbb{R}^2)$. The potential $b$ may vanish on sets of positive measure or even be negative and the nonlinearity $f$ has critical growth. We look for solutions in the subspace of $W^{1,2}(\mathbb{R}^2)$ given by $$H := \left\{u \in W^{1,2}(\mathbb{R}^2) : \int_{\mathbb{R}^2} b(x)u^2 \textrm{d}x < \infty\right\}.$$

Due to the presence of the term $m(\int_{\mathbb{R}^2}(|\nabla u|^2 + b(x)u^2) \textrm{d}x)$ the equation is $(P)$ no longer a pointwise identity and therefore the problem is called nonlocal. In \cite{Kir}, G. Kirchhoff presented his study on transverse vibrations of elastic strings and proposed a hyperbolic equation of the type 
\begin{equation}\label{eq. orig. Kirchhoff}
\dfrac{\partial^2u}{\partial t^2} - \left(k_1 + k_2\int_0^L\left|\dfrac{\partial u}{\partial x}\right|^2 \textrm{d}x\right)\dfrac{\partial^2u}{\partial x^2} = 0,
\end{equation}
where $k_1$, $k_2$ e $L$ are positive constants. This extend the classical D'Alembert wave equation by considering the effects of
the changes in the length of the strings during the vibrations. So,  more general versions of \eqref{eq. orig. Kirchhoff} and the corresponding stationary equations have been called Kirchhoff equations and became subject of intense research mainly after the works of S.I. Pohozaev \cite{Poh} and J.-L. Lions \cite{JLio}. Variational Methods have been used by many authors to obtain results of existence and multiplicity of solutions for stationary Kirchhoff equations since the pioneering work of C.O. Alves et al. \cite{AlvCorMa}.

In order to present the conditions on the nonlocal term $m$ we first define $M(t):=\int_0^t m(\tau) \textrm{d}\tau$, $t \geq 0$. The hypotheses on $m : [0,\infty) \rightarrow (0,\infty)$ are:
\begin{itemize} 
\item[$(m_1)$] $\displaystyle m_0 := \inf_{t \geq 0} m(t) > 0$;
\item[$(m_2)$] for any $t_1,t_2 \geq 0$, it holds $$M(t_1+t_2) \geq M(t_1) + M(t_2);$$
\item[$(m_3)$] $\dfrac{m(t)}{t}$ is decreasing in $(0,\infty)$.
\end{itemize}
Condition $(m_2)$ is valid, for example, if $m$ is nondecreasing. The typical example of function satisfying $(m_1)-(m_3)$ is $m(t) = \alpha + \beta t$, with $\alpha>0$ and $\beta \geq 0$. Other examples are $m(t) = \alpha + \beta t^{\delta}$, with $\delta \in (0,1)$, $m(t) = \alpha(1+\log(1+t))$ or $m(t) = \alpha + \beta e^{-t}$. 

Concerning the potential  $b \in L_{\textrm{loc}}^{\infty}(\mathbb{R}^2)$ we set
$$\lambda_1^b := \inf\left\{\int_{\mathbb{R}^2}(|\nabla u|^2 + b(x)u^2) \textrm{d}x : u \in H \mbox{ and } \left\|u\right\|_{L^2(\mathbb{R}^2)} = 1\right\}$$
and, for each $\Omega \subset \mathbb{R}^2$ open and nonempty,
$$\nu_b(\Omega) := \inf\left\{\int_{\mathbb{R}^2}(|\nabla u|^2 + b(x)u^2) \textrm{d}x : u \in W_0^{1,2}(\Omega) \mbox{ and } \left\|u\right\|_{L^2(\Omega)} = 1\right\}
$$ 
and $\nu_b(\varnothing) = \infty$. The hypotheses on $b$ are:
\begin{itemize} 
\item[$(b_1)$] $\lambda_1^b > 0$;
\item[$(b_2)$] $\displaystyle \lim_{r\rightarrow\infty}\nu_b\left(\mathbb{R}^2 \backslash \overline{\{x \in \mathbb{R}^2 : |x|<r\}}\right) = \infty$;
\item[$(b_3)$] there exists $B_0 > 0$ such that $$b(x) \geq -B_0, \quad \forall\,  x \in \mathbb{R}^2.$$  
\end{itemize}

For the function $A \in L_{\textrm{loc}}^{\infty}(\mathbb{R}^2)$ we suppose that
\begin{itemize} 
\item[$(A_1)$] $A(x) \geq 1$ for any $x \in \mathbb{R}^2$;  
\item[$(A_2)$] there exists $\beta_0 > 1$, $C_0 > 0$ and $R_0 > 0$ such that $$A(x) \leq C_0 \left[ 1 + (b^+(x))^{1/\beta_0}\right], \quad \forall \, x \in \mathbb{R}^2 \setminus B_{R_0}(0),$$ where $b^+(x) := \max\{0,b(x)\}$. 
\end{itemize}

Conditions $(b_1)-(b_3)$ and $(A_1)-(A_2)$ were first considered by B. Sirakov \cite{Sir} in the study of a class of subcritical Schr\"odinger equations in dimension $N \geq 3$. These hypotheses ensure that $H$ is a Hilbert space with inner product given by
$$\left\langle u,v \right\rangle_H = \int_{\mathbb{R}^2}(\nabla u \cdot \nabla v + b(x)uv) \textrm{d}x,\quad \forall \, u,\, v \in H,$$
and norm $\left\|u\right\|_H = \sqrt{\left\langle u,u \right\rangle_H}$. Moreover $H$ is continuously embedded into $W^{1,2}(\mathbb{R}^2)$ and, for every $p \geq 2$, compactly embedded into the weighted Lebesgue space 
$$L_A^p(\mathbb{R}^2) := \left\{u:\mathbb{R}^2\rightarrow\mathbb{R} \ \textrm{measurable} \ : \ \int_{\mathbb{R}^2} A(x)|u|^p \textrm{d}x < \infty\right\},$$
which is a Banach space when endowed with the norm
$$\left\|u\right\|_{L_A^p(\mathbb{R}^2)} = \left(\int_{\mathbb{R}^2} A(x)|u|^p \textrm{d}x\right)^{1/p}.$$ 
For the proof of these embeddings, see \cite[Sections 2 and 3]{Sir}. By $(A_1)$, $L_A^p(\mathbb{R}^2) \hookrightarrow L^p(\mathbb{R}^2)$ and, consequently, the embedding $H \hookrightarrow L^p(\mathbb{R}^2)$ is also compact. In order to guarantee the compacity of this last embedding, one normally use the conditions $b(x) \geq b_0 > 0$ and
\begin{equation}\label{b coercivo} 
\lim_{|x|\rightarrow \infty}b(x) = \infty, \ \textrm{or} \ 1/b \in L^1(\mathbb{R}^2), \ \textrm{or} \ \mbox{meas}(\Omega_{b,K})<\infty \ \forall \, K>0,
\end{equation}
where $\Omega_{b,K} := \{x \in \mathbb{R}^2 : b(x)<K\}$. A weaker geometric condition that implies on $(b_2)$ is (see \cite[Theorem 1.4]{Sir}): for any $K>0$, any $r>0$ and any sequence $(x_n)\subset\mathbb{R}^2$ with $\displaystyle \lim_{n\rightarrow\infty}|x_n|=\infty$, we have
\begin{equation}\label{cond. geom. sobre b}
\lim_{n\rightarrow\infty} \mbox{meas}(\Omega_{b,K} \cap B_r(x_n)) = 0.
\end{equation}
A potential satisfying the above condition is $b(x)=b(x_1,x_2)=|x_1x_2|$. Since $(b_2)$ and $(b_3)$ are sufficient conditions for $\lambda_1^b$ to be achieved (see \cite[Proposition 2.2]{Sir}),  it is easy to see that this potential also satisfies $(b_1)$. Moreover, since for any constant $C \in \mathbb{R}$ we have $\Omega_{b-C,K} = \Omega_{b,K+C}$ and $\lambda_1^{b-C} = \lambda_1^b - C$, other potential satisfying $(b_1)-(b_3)$ is $b(x)=|x_1x_2| - C$, for certain values of $C$. Notice these two examples do not satisfy \eqref{b coercivo}.

Embedding $H \hookrightarrow W^{1,2}(\mathbb{R}^2)$ implies that, for some constant $\zeta>0$,
\begin{equation}\label{imersao H em D^(1,2)}
\left\|u\right\|_H \geq \zeta\left\|\nabla u\right\|_{L^2(\mathbb{R}^2)}, \ \ \forall \ u \in H.
\end{equation}
If $b \leq 0$ on some set with positive measure, then we cannot have $\zeta>1$. However, we can consider $\zeta=1$ if
\begin{itemize}
\item[$(\widehat{b_3})$] $b(x) \geq 0$ for any $x \in \mathbb{R}^2$.
\end{itemize} 

Concerning the nonlinearity $f : \mathbb{R} \rightarrow [0,\infty)$, we first suppose that $f(s) = 0$, for any $s \leq 0$, and define $F(s) := \int_0^s f(\tau) \textrm{d}\tau$, $s \in \mathbb{R}$. The main hypotheses on $f$ are:  
\begin{itemize}  
\item[$(f_1)$] there exists $\alpha_0 > 0$ such that $$\lim_{s \rightarrow \infty} \dfrac{f(s)}{e^{\alpha s^2}} = \left\{ \begin{array}{ll} 0, & \textrm{if} \ \alpha > \alpha_0, \\ \infty ,& \textrm{if} \ \alpha < \alpha_0; \end{array}  \right.$$ 

\item[$(f_2)$] there exists $s_0, K_0 > 0$ such that 
$$
F(s) \leq K_0f(s), \quad \forall \ s \geq s_0;
$$

\item[$(f_3)$] there exists $\theta_0 > 4$ such that 
$$
\theta_0 F(s) \leq sf(s), \quad \forall \ s > 0;
$$

\item[$(f_4)$] $\dfrac{f(s)}{s^3}$ is positive and nondecreasing  in $(0,\infty)$.
\end{itemize}
If $\theta>4$, an example of function $f$ satisfying $(f_1)-(f_4)$ is
$$f(s) = \dfrac{d}{ds}\left(\dfrac{s^{\theta}}{\theta}(e^{s^2}-1)\right) = s^{\theta - 1}(e^{s^2}-1) + \dfrac{2s^{\theta+1}}{\theta}e^{s^2}.$$

According to $(f_1)$ we are dealing with a function with critical growth. This notion of criticality was originally motivated by the Trudinger-Moser inequality (see \cite{Mos,Tru}), which states that $W_0^{1,2}(\Omega)$ is continuously embedded into the Orlicz space $L_{\phi_\alpha}(\Omega)$ associated with the function $\phi_\alpha(t) := e^{\alpha t^2}-1$, $t \in \mathbb{R}$, for $0< \alpha \leq 4\pi$ and any bounded domain $\Omega \subset \mathbb{R}^2$. This result has been generalized in many ways (see \cite{Cao,DoO,Ruf,AdiSan,LiRuf,SouDoO1,SouDoO2} and references therein). 
Here, we prove a version of that result for functions belonging to the  space $H$ (see Lemma \ref{Lema 3}).

The main difficulty in dealing with critical growth is the lack of compactness from the embeddings
of the Sobolev spaces into Orlicz spaces $L_{\phi_\alpha}$. In \cite[subsection I.7]{PLio}, P.-L. Lions proved a concentration-compactness result that allow us to overcome this trouble in $W_0^{1,2}(\Omega)$, $\Omega \subset \mathbb{R}^2$ bounded domain. This result have had many generalizations and applications in recent years (see \cite{LamLu,Yang1,Yang2,CerCiaHen,DoOSouMedSev} and references therein). Corollary \ref{Corolario 1} in  next section is a version of the result of P.-L. Lions for the space $H$.

Before stating our results, we need to fix some notations:
\begin{equation} \label{def_sp}
S_p := \inf_{u \in H\backslash\{0\}} \dfrac{\left\|u\right\|_H}{\left\|u\right\|_{L^p(\mathbb{R}^2)}}, \ \ p \geq 2,
\end{equation}
$$
C_p := \inf\left\{C>0 : pM(t^2S_p^2) - 2Ct^p \leq pM\left(\dfrac{4\pi\zeta^2}{\alpha_0}\right), \ \forall \ t > 0\right\}, \ \ p>4.
$$
The values $S_p$ and $C_p$ are finite, for $p \geq 2$ and $p>4$ respectively, due to the embedding $H \hookrightarrow L^p(\mathbb{R}^2)$ and the hypothesis $(m_3)$, which implies that $m(t)<m(1)t$ for any $t>1$.

Our main results for the problem $(P)$ can be stated as follows:

\begin{theorem}\label{Teorema exist. sol. caso zeta<1}
Suppose that $(m_1)-(m_3)$, $(b_1)-(b_3)$, $(A_1)-(A_2)$ and $(f_1)-(f_4)$ are satisfied. Suppose also that
\begin{itemize}
\item[$(f_5)$] there exists $p_0>4$ such that $$f(s) > C_{p_0}s^{p_0-1}, \ \ \forall \ s > 0.$$
\end{itemize}
Then problem $(P)$ has a nonnegative \textit{ground state} solution. 
\end{theorem}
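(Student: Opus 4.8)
The plan is to use the mountain pass theorem together with the new Trudinger--Moser inequality (Lemma~\ref{Lema 3}) and its concentration--compactness companion (Corollary~\ref{Corolario 1}). First I would introduce the energy functional associated with $(P)$,
\[
I(u) = \frac{1}{2}M\!\left(\|u\|_H^2\right) - \int_{\mathbb{R}^2} A(x)F(u)\,\mathrm{d}x,
\]
on the Hilbert space $H$, and check that it is well defined and of class $C^1$: here $(m_1)$ and $(m_3)$ control $M$, while $(f_1)$ combined with Lemma~\ref{Lema 3} controls the integral term provided one restricts to a ball of $H$ on which the exponential integrals are uniformly bounded. Its critical points are weak solutions of $(P)$, and since $f(s)=0$ for $s\le 0$, testing with $u^-$ and using $(m_1)$ and $(b_1)$ forces any nontrivial critical point to be nonnegative. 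I would then verify the mountain pass geometry: the origin is a strict local minimum (using $(f_5)$ only mildly here, together with the superquadratic estimate $M(t)\ge m_0 t$ from $(m_1)$ and the subcritical Sobolev-type estimate below the critical threshold), and $(f_3)$ with $\theta_0>4$ gives that $M(\|u\|_H^2)$ grows at most like $\|u\|_H^2\log(\cdot)$ while $F$ grows faster, so $I(t u_0)\to -\infty$ along a fixed direction. Let $c$ be the mountain pass level.

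The decisive step is to locate $c$ below the critical level dictated by the Trudinger--Moser constant, namely to show
\[
c < \frac{1}{2}M\!\left(\frac{4\pi\zeta^2}{\alpha_0}\right).
\]
This is where $(f_5)$ and the definition of $C_{p_0}$ are used in an essential way: estimating $I$ along the ray $t\mapsto t u$ where $u$ is a normalized minimizer for $S_{p_0}$ (or, if that is not attained, a near-minimizer), one bounds $\max_{t>0} I(tu)$ using $F(s)\ge \tfrac{1}{p_0}f'$-type lower bounds coming from $(f_5)$, i.e. $F(s) > \tfrac{C_{p_0}}{p_0}s^{p_0}$; the very definition of $C_{p_0}$ as the infimum making $p_0 M(t^2 S_{p_0}^2) - 2C t^{p_0} \le p_0 M(4\pi\zeta^2/\alpha_0)$ for all $t>0$ then yields the strict inequality after checking the inequality at $C=C_{p_0}$ is actually strict for the near-minimizer (this uses $(m_3)$ and the strictness built into $(f_5)$, $f(s)>C_{p_0}s^{p_0-1}$, not $\ge$). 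I expect this comparison — making sure the strict sign survives and handling the case where $S_{p_0}$ is not attained by a limiting argument — to be the main technical obstacle.

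Next I would run the Palais--Smale analysis at level $c$. Take a $(PS)_c$ sequence $(u_n)$; the Ambrosetti--Rabinowitz-type condition $(f_3)$ together with $(m_2)$ (which gives $M(t)\ge \tfrac{1}{4}m(t)t$ — more precisely $(m_2)$ and $(m_3)$ combine to compare $\tfrac14 m(s)s$ with $M(s)$) bounds $(u_n)$ in $H$. Passing to a subsequence, $u_n\rightharpoonup u$ in $H$; by the compact embeddings $H\hookrightarrow L_A^q(\mathbb{R}^2)$ stated in the introduction (which remove the loss of compactness at infinity thanks to $(b_2)$, $(b_3)$, $(A_1)$, $(A_2)$), the subcritical part passes to the limit, and Corollary~\ref{Corolario 1} — the $H$-version of the Lions concentration--compactness lemma — gives that $\int A(x)F(u_n)\to\int A(x)F(u)$ and $\int A(x)f(u_n)u_n\to\int A(x)f(u)u$ provided the level $c$ stays below the threshold just established. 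A standard Brezis--Lieb / Lebesgue argument then upgrades weak to strong convergence $u_n\to u$ in $H$ (here $(m_1)$ and $(m_3)$ ensure the map $u\mapsto m(\|u\|_H^2)u$ is of the right monotone type to conclude), so $u$ is a nontrivial critical point at level $c$.

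Finally, to promote $u$ to a \emph{ground state} I would show $c$ equals the least-energy level on the Nehari-type manifold $\mathcal{N} = \{v\in H\setminus\{0\} : I'(v)v = 0\}$. Hypotheses $(m_3)$ and $(f_4)$ — the monotonicity of $m(t)/t$ and of $f(s)/s^3$ — are precisely the conditions guaranteeing that for each $v\neq 0$ the function $t\mapsto I(tv)$ has a unique maximum point $t_v>0$, with $t_v v\in\mathcal{N}$, and that $c = \inf_{v\neq 0}\max_{t>0} I(tv) = \inf_{\mathcal{N}} I$. Since the critical point $u$ found above lies on $\mathcal{N}$ with $I(u)=c$, it is a ground state; nonnegativity was already noted, and elliptic regularity (or simply the maximum-principle structure of $(P)$ together with $f\ge 0$) confirms it is a genuine nonnegative solution. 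The only subtlety I anticipate here is verifying the existence and uniqueness of the fibering maximum $t_v$ under the weak Kirchhoff hypotheses $(m_1)$--$(m_3)$ rather than the usual $m(t)=\alpha+\beta t$; $(m_3)$ is tailored exactly for this.
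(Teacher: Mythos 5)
Your overall architecture coincides with the paper's: mountain pass geometry, the minimax estimate $c^* < \frac{1}{2}M(4\pi\zeta^2/\alpha_0)$ obtained from $(f_5)$ and the extremal function for $S_{p_0}$ (which is attained because the embedding $H\hookrightarrow L^{p_0}(\mathbb{R}^2)$ is compact, so no near-minimizer limiting argument is needed), a Lions-type concentration--compactness step via Corollary \ref{Corolario 1}, and the Nehari comparison $c^*\le d^*$ coming from the monotonicity hypotheses $(m_3)$ and $(f_4)$.

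The genuine gap is in the compactness step. You assert that Corollary \ref{Corolario 1} yields $\int A(x) f(u_n)u_n \to \int A(x) f(u)u$ ``provided the level stays below the threshold'', but Corollary \ref{Corolario 1} only controls integrals of the form $\int A(x)|\omega_n|^q(e^{pv_n^2}-1)$ for $p < 4\pi\zeta^2/(1-\|v\|^2)$, where $v_n = u_n/\|u_n\|\rightharpoonup v = u_0/\rho_0$ and $\rho_0=\lim_n\|u_n\|$. To apply it with $p$ slightly above $\alpha_0\rho_0^2$ one must first show $\rho_0^2 < 4\pi\zeta^2/\alpha_0 + \|u_0\|^2$, and this does \emph{not} follow from $c^* < \frac{1}{2}M(4\pi\zeta^2/\alpha_0)$ alone. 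The paper supplies two ingredients that are absent from your outline: (i) a preliminary contradiction argument, using Lemma \ref{Lema 9}, Fatou's lemma and weak lower semicontinuity, showing $I(u_0)\ge 0$; and (ii) the identity $M(\rho_0^2) = 2c^* + M(\|u_0\|^2) - 2I(u_0) < M(4\pi\zeta^2/\alpha_0) + M(\|u_0\|^2)$, which is then bounded by $M(4\pi\zeta^2/\alpha_0 + \|u_0\|^2)$ using the superadditivity hypothesis $(m_2)$, whence the needed inequality since $M$ is increasing. In your proposal $(m_2)$ is never deployed for this purpose --- you invoke it instead for the boundedness of Palais--Smale sequences, which in fact follows from Lemma \ref{Lema 9}$(i)$, i.e.\ from $(m_3)$. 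Without (i) and (ii) the application of Corollary \ref{Corolario 1} is unjustified and the strong convergence $u_n\to u_0$ does not follow. Two minor slips: $M$ grows at most quadratically (from $(m_3)$ one gets $m(t)\le m(1)t$ for $t\ge 1$), not like $t\log t$, though the conclusion $I(tv)\to-\infty$ still holds because $\theta_0>4$; and the convergence $\int A(x)F(u_n)\to\int A(x)F(u)$ comes from Lemma \ref{Lema 10} plus a tail estimate (Proposition \ref{Proposicao seq. de Palais-Smale}), not from Corollary \ref{Corolario 1}, and requires no level restriction.
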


\begin{theorem}\label{Teorema exist. sol. caso zeta=1}
Suppose that $(m_1)-(m_3)$, $(b_1)-(b_2)$, $(\widehat{b_3})$, $(A_1)-(A_2)$ and $(f_1)-(f_4)$ are satisfied. Suppose also that 
\begin{itemize}
\item[$(f_6)$] there exists $\gamma_0 > 0$ such that
$$\liminf_{s \rightarrow \infty} \dfrac{sf(s)}{e^{\alpha_0 s^2}} \geq \gamma_0 > 4\alpha_0^{-1}m\left(\dfrac{4\pi}{\alpha_0}\right)\inf_{R>0}\left\{R^{-2}e^{R^2M_R/2}\right\},$$
where $M_R := \left\|b\right\|_{L^{\infty}(B_R(0))}$.
\end{itemize}
Then problem $(P)$ has a nonnegative \textit{ground state} solution. 
\end{theorem}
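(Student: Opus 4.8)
The plan is to realize the ground state as a mountain--pass critical point of the energy functional
\[
I(u)=\tfrac12 M\!\left(\|u\|_H^2\right)-\int_{\mathbb{R}^2}A(x)F(u)\,\textrm{d}x,\qquad u\in H,
\]
which is well defined and of class $C^1$ on $H$ thanks to the Trudinger--Moser inequality of Lemma~\ref{Lema 3}, the compact embeddings $H\hookrightarrow L_A^p(\mathbb{R}^2)$ ($p\geq2$) and $(f_1)$, and whose critical points are precisely the weak solutions of $(P)$. Since $(\widehat{b_3})$ gives $\zeta=1$, the relevant compactness threshold is $\tfrac12 M(4\pi/\alpha_0)$. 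First I would verify the mountain--pass geometry: $I(0)=0$; using $(m_1)$ (so $M(t)\geq m_0t$), $(f_1)$ and Lemma~\ref{Lema 3} one estimates $\int A F(u)$ by a quantity that is $o(\|u\|_H^2)$ near the origin and obtains $I(u)\geq\delta>0$ on a small sphere $\|u\|_H=\rho$; and $(f_3)$ (which forces $F(s)\geq c_1s^{\theta_0}-c_2$ with $\theta_0>4$, whereas $(m_3)$ makes $M(t^2)$ grow at most like $t^4$) gives $I(te_0)\to-\infty$ along some ray. Let $c$ be the associated minimax level.

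Next I would record the Nehari description. By $(m_3)$ and $(f_4)$, for every $u\in H$ with $u^+\neq0$ the map $t\mapsto I(tu)$, $t>0$, has a unique maximum point $t_u>0$, and $t_uu$ lies on the Nehari set $\mathcal{N}:=\{v\in H\setminus\{0\}:I'(v)v=0\}$; the standard argument then gives $c=\inf_{\mathcal{N}}I=\inf\{I(v):v\neq0,\ I'(v)=0\}$. Hence it suffices to produce one nontrivial critical point at level $c$; such a point is automatically a ground state, and testing the equation with $u^-$ (recall $f\equiv0$ on $(-\infty,0]$ and $m\geq m_0>0$) shows it is nonnegative.

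The heart of the matter — and the step I expect to be the main obstacle — is the strict estimate $c<\tfrac12 M(4\pi/\alpha_0)$. Using $(f_6)$, fix $R>0$ with $\gamma_0>4\alpha_0^{-1}m(4\pi/\alpha_0)R^{-2}e^{R^2M_R/2}$, and take the Moser functions $\omega_n$ supported in $B_R(0)$, normalized so $\|\nabla\omega_n\|_{L^2(\mathbb{R}^2)}^2=1$ (thus $\omega_n\equiv(2\pi)^{-1/2}\sqrt{\log n}$ on $B_{R/n}(0)$); note $\|\omega_n\|_H^2=1+\int b\,\omega_n^2$ with $\log n\int b\,\omega_n^2\leq M_R\log n\int\omega_n^2\to M_RR^2/4$, and set $v_n=\omega_n/\|\omega_n\|_H$. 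Arguing by contradiction, suppose $\max_{t\geq0}I(tv_n)\geq\tfrac12 M(4\pi/\alpha_0)$ for all $n$ and let $t_n>0$ be a maximizer, so $m(t_n^2)t_n^2=\int A(x)f(t_nv_n)t_nv_n\,\textrm{d}x$ and $M(t_n^2)\geq M(4\pi/\alpha_0)$, i.e. $t_n^2\geq4\pi/\alpha_0$. On $B_{R/n}(0)$ we have $t_nv_n\equiv\sigma_n:=t_n\sqrt{\log n}/(\sqrt{2\pi}\,\|\omega_n\|_H)\to\infty$, so $(f_6)$ and $A\geq1$ yield
\[
m(t_n^2)t_n^2\ \geq\ (\gamma_0-\ep)\,\pi R^2\,\exp\!\Big(\big(\tfrac{\alpha_0t_n^2}{2\pi\|\omega_n\|_H^2}-2\big)\log n\Big).
\]
Now $(m_3)$ (giving $m(t)\leq m(1)t$ for $t\geq1$) rules out $t_n^2\to\infty$, and boundedness of the left side rules out $t_n^2\to L>4\pi/\alpha_0$; hence $t_n^2\to4\pi/\alpha_0$ and the left side tends to $4\pi\alpha_0^{-1}m(4\pi/\alpha_0)$. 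Since $t_n^2\geq4\pi/\alpha_0$, the exponent is $\geq(2\|\omega_n\|_H^{-2}-2)\log n$, whose $\liminf$ is at least $-M_RR^2/2$; passing to the limit and letting $\ep\to0$ gives $\gamma_0\leq4\alpha_0^{-1}m(4\pi/\alpha_0)R^{-2}e^{R^2M_R/2}$, contradicting the choice of $R$. Therefore $\max_{t\geq0}I(tv_n)<\tfrac12 M(4\pi/\alpha_0)$ for some $n$, and since the ray through $v_n$ (extended until $I<0$) is an admissible path, $c<\tfrac12 M(4\pi/\alpha_0)$.

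Finally I would carry out the compactness analysis. A Palais--Smale sequence $(u_n)$ at level $c$ is bounded in $H$: from $I(u_n)-\theta_0^{-1}I'(u_n)u_n$ one uses $(f_3)$ (so $\theta_0^{-1}f(s)s-F(s)\geq0$) and $(m_3)$ (so $t\mapsto\tfrac12 M(t)-\theta_0^{-1}m(t)t$ is nondecreasing and $\geq(\tfrac14-\theta_0^{-1})m_0t$). Thus $u_n\rightharpoonup u$ in $H$, $u_n\to u$ in every $L_A^p(\mathbb{R}^2)$ by compactness, and $u_n\to u$ a.e. The bound $c<\tfrac12 M(4\pi/\alpha_0)$, together with the Lions-type concentration--compactness of Corollary~\ref{Corolario 1}, excludes concentration of $(u_n)$, which permits passing to the limit in the nonlinear terms — here $(f_2)$ enters for the convergence $\int AF(u_n)\to\int AF(u)$ and $(m_2)$ for the energy splitting — so that $u$ is a critical point of $I$ with $I(u)=c$. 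If $u\equiv0$, the nonlinear terms vanish in the limit, forcing $\|u_n\|_H\to0$ and $c=0$, contradicting $c\geq\delta>0$; hence $u\neq0$, and by the Nehari characterization it is a nonnegative ground state. The delicate points throughout are the Moser-function estimate of the previous paragraph and the exclusion of concentration, both of which hinge on the sharp level bound and on tracking the weight $b$ through the factor $e^{R^2M_R/2}$.
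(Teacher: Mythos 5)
Your overall strategy coincides with the paper's: mountain-pass geometry, boundedness of Palais--Smale sequences via $I(u_n)-\theta_0^{-1}I'(u_n)u_n$ and Lemma~\ref{Lema 9}, the level estimate $c^*<\tfrac12 M(4\pi/\alpha_0)$ through $H$-normalized Moser functions and $(f_6)$, recovery of compactness via Corollary~\ref{Corolario 1} together with $(m_2)$, and the Nehari comparison $c^*\le d^*$ to upgrade the solution to a ground state. In the minimax estimate you in fact streamline the paper's Proposition~\ref{estim. nivel PM}: you extract the constant $\pi R^2 e^{-R^2M_R/2}$ from the inner ball $B_{R/n}(0)$ alone, whereas the paper first proves $t_n^2\to 4\pi/\alpha_0$ from the inner ball and then runs a second estimate over all of $B_R(0)$ with the splitting $D_{n,\delta}\cup E_{n,\delta}$ and Lemma~\ref{Lema 7}; since the annulus contribution $\pi R^2$ is exactly cancelled by the $E_{n,\delta}$ correction, both routes land on the same bound $\gamma_0\le 4\alpha_0^{-1}m(4\pi/\alpha_0)R^{-2}e^{R^2M_R/2}$, and your shorter computation is correct.

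The one place where your argument is genuinely incomplete is the final compactness step, which you compress into ``excludes concentration \dots\ so that $u$ is a critical point of $I$ with $I(u)=c$.'' In the Kirchhoff setting this does not follow from weak convergence plus the level bound alone. The mechanism in the paper is: with $\rho_0=\lim\|u_n\|$ and assuming $\|u_0\|<\rho_0$, one writes $M(\rho_0^2)=2c^*+2\int A(x)F(u_0)=2c^*+M(\|u_0\|^2)-2I(u_0)$ and needs $I(u_0)\ge 0$ to conclude $M(\rho_0^2)<M(4\pi/\alpha_0)+M(\|u_0\|^2)\le M(4\pi/\alpha_0+\|u_0\|^2)$, which is what puts $\alpha_0\rho_0^2$ below the threshold $4\pi/(1-\|v_0\|^2)$ required by Corollary~\ref{Corolario 1}. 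But $I(u_0)\ge 0$ is not free: at this stage $u_0$ is not known to be a critical point (the weak limit of $I'(u_n)\phi$ carries the coefficient $m(\rho_0^2)$, not $m(\|u_0\|^2)$, so one cannot test with $u_0$ and use $(f_3)$ as in the local case). The paper proves $I(u_0)\ge 0$ by a separate contradiction argument on the fibering map $h(t)=I(tu_0)$, using an interior maximum $t_0\in(0,1)$, the monotonicity statements of Lemma~\ref{Lema 9} (which is where $(m_3)$ and $(f_4)$ enter in full strength), weak lower semicontinuity and Fatou's lemma. Once that is supplied, the rest of your outline (the estimate of $\int A(x)f(u_n)(u_n-u_0)$ via Corollary~\ref{Corolario 1}, the conclusion $m(\rho_0^2)(\rho_0^2-\|u_0\|^2)=0$, strong convergence, and Lemma~\ref{Lema 8}) goes through exactly as in the paper.
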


Hypotheses $(m_3)$ and $(f_4)$ ensure that the solutions given by Theorems \ref{Teorema exist. sol. caso zeta<1} and \ref{Teorema exist. sol. caso zeta=1} are ground state solutions. However, as we will see in the proofs, we still obtain nonnegative nontrivial solution for the problem $(P)$, not necessarily ground state, if we replace $(m_3)$ and $(f_4)$ by weaker conditions, namely: 
\begin{itemize}
\item[$(m_3^*)$] there exist constants $a_1 > 0$ and $T>0$ such that $$m(t) \leq a_1t, \ \ \forall \ t \geq T;$$
\item[$(f_4^*)$] $\displaystyle \lim_{s \rightarrow 0^+} \dfrac{f(s)}{s} = 0$
\end{itemize}
and the conditions of monotonicity  given in the conclusion of  Lemma \ref{Lema 9} in the next section. Specifically in the case of Theorem \ref{Teorema exist. sol. caso zeta=1}, this replacement allow us to consider functions $f$ that vanish on some neighborhood of origin.

The ideias used here permit us to obtain new results even in the local case. Actually, if $m \equiv 1$, equation in $(P)$ is reduced to the Schr\"odinger equation
$$
-\Delta u + b(x)u = A(x)f(u) \ \ \ \textrm{in} \ \ \ \mathbb{R}^2.
\leqno{(\widehat{P})}
$$
In this case, instead of $(f_3)$ and $(f_4)$, we consider the hypotheses
\begin{itemize}  
\item[$(\widehat{f_3})$] there exists $\widehat{\theta_0} > 2$ such that $$\widehat{\theta_0}F(s) \leq sf(s), \quad \forall\, \ s > 0;$$
\item[$(\widehat{f_4})$] $\dfrac{f(s)}{s}$ is positive and nondecreasing in $(0,\infty)$. 
\end{itemize}
In contrast to $(f_4)$, hypothesis $(\widehat{f_4})$ does not imply on $(f_4^*)$. Setting, for $q > 2$,
$$
\widehat{C}_q := \inf\left\{C>0 : qS_q^2t^2 - 2Ct^q \leq \dfrac{4\pi q\zeta^2}{\alpha_0}, \ \forall \ t > 0\right\} = S_q^q\left(\dfrac{\alpha_0(q-2)}{4\pi q\zeta^2}\right)^{(q-2)/2},
$$
the main results for problem $(\widehat{P})$ can be stated as follows:

\begin{theorem}\label{Teo. exist. sol. eq. Schr. caso zeta<1}
Suppose that $(b_1)-(b_3)$, $(A_1)-(A_2)$, $(f_1)-(f_2)$, $(\widehat{f_3})$ and $(f_4^*)$ are satisfied. Suppose also that 
\begin{itemize}
\item[$(\widehat{f_5})$] there exists $q_0>2$ such that $$f(s) > \widehat{C}_{q_0}s^{q_0-1}, \ \ \forall \ s > 0.$$
\end{itemize}
Then problem $(\widehat{P})$ has a nonnegative nontrivial weak solution. If, in addition, $f$ satisfies $(\widehat{f_4})$, the solution is \textit{ground state}. 
\end{theorem}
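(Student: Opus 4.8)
The plan is to realize the solution as a mountain pass critical point of
$$
I(u)=\frac{1}{2}\|u\|_H^2-\int_{\mathbb{R}^2}A(x)F(u)\,\textrm{d}x,\qquad u\in H.
$$
Conditions $(f_1)$ and $(f_4^*)$ provide, for every $\alpha>\alpha_0$, $q\geq2$ and $\varepsilon>0$, estimates $f(s)\leq\varepsilon s+C|s|^{q-1}(e^{\alpha s^2}-1)$ and $F(s)\leq\varepsilon s^2+C|s|^q(e^{\alpha s^2}-1)$; together with the Trudinger--Moser inequality of Lemma \ref{Lema 3} and the compact embeddings $H\hookrightarrow L_A^p(\mathbb{R}^2)$, $p\geq2$, these give $I\in C^1(H,\mathbb{R})$. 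First I would check the mountain pass geometry: $I(0)=0$; for $\|u\|_H=\rho$ with $\rho$ so small that $r\alpha\rho^2<4\pi\zeta^2$ for some $r>1$ and some $\alpha>\alpha_0$ close to $\alpha_0$, Hölder's inequality and Lemma \ref{Lema 3} bound the nonlinear term by $\varepsilon C_1\rho^2+C_2\rho^q$, so $I\geq\delta>0$ on that sphere; and, since $(\widehat{f_5})$ forces $f>0$ on $(0,\infty)$ and hence $F>0$ there, $(\widehat{f_3})$ gives $F(s)\geq c_0s^{\widehat{\theta_0}}$ for $s\geq s_0$, so $I(tu_0)\to-\infty$ as $t\to\infty$ for every fixed $u_0\in H$ with $u_0\geq0$, $u_0\neq0$.

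The quantitative heart of the proof is the estimate $c<2\pi\zeta^2/\alpha_0$ for a suitable mountain pass level $c$, the value $2\pi\zeta^2/\alpha_0$ being the compactness threshold coming from Lemma \ref{Lema 3}. I would take $u_{q_0}\in H\setminus\{0\}$ realizing $S_{q_0}$ (a minimizer exists since $H\hookrightarrow L^{q_0}(\mathbb{R}^2)$ is compact), normalized so that $\|u_{q_0}\|_{L^{q_0}(\mathbb{R}^2)}=1$ and hence $\|u_{q_0}\|_H=S_{q_0}$. Using $A\geq1$ and the strict inequality in $(\widehat{f_5})$, which yields $F(s)>(\widehat{C}_{q_0}/q_0)s^{q_0}$ for $s>0$, one gets $I(tu_{q_0})<\tfrac12 t^2S_{q_0}^2-(\widehat{C}_{q_0}/q_0)t^{q_0}$ for every $t>0$, and the defining property of $\widehat{C}_{q_0}$ bounds this last expression by $2\pi\zeta^2/\alpha_0$; since $I(tu_{q_0})\to-\infty$, its maximum over $t\geq0$ is attained at some $t^*>0$, where the inequality is strict. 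Fixing $e:=t_1u_{q_0}$ with $t_1$ large enough that $\|e\|_H>\rho$ and $I(e)<0$, and using the path $\gamma_0(t)=te$, the associated mountain pass level satisfies $\delta\leq c\leq\max_{s\in[0,t_1]}I(su_{q_0})<2\pi\zeta^2/\alpha_0$. The Mountain Pass Theorem then yields $(u_n)\subset H$ with $I(u_n)\to c$, $I'(u_n)\to0$; from $(\widehat{f_3})$, $I(u_n)-\widehat{\theta_0}^{-1}I'(u_n)u_n\geq(\tfrac12-\widehat{\theta_0}^{-1})\|u_n\|_H^2$ with $\widehat{\theta_0}>2$, so $(u_n)$ is bounded. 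Passing to a subsequence, $u_n\rightharpoonup u$ in $H$, $u_n\to u$ in $L_A^p(\mathbb{R}^2)$ for all $p\geq2$ and a.e. Since $\int_{\mathbb{R}^2}Af(u_n)u_n=\|u_n\|_H^2+o(1)$ is bounded and $f(\sigma)\leq R^{-1}\sigma f(\sigma)$ for $\sigma\geq R$, the functions $Af(u_n)$ are uniformly integrable on compact sets, so by Vitali's theorem (the de Figueiredo--Miyagaki--Ruf argument) $Af(u_n)\to Af(u)$ in $L^1_{\textrm{loc}}$; letting $n\to\infty$ in $I'(u_n)\varphi\to0$ for $\varphi\in C_c^\infty(\mathbb{R}^2)$ gives $I'(u)=0$, and testing with $u^-$ together with $f(s)=0$ for $s\leq0$ forces $\|u^-\|_H=0$, hence $u\geq0$.

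The hard part will be showing $u\neq0$, i.e.\ recovering compactness out of $c<2\pi\zeta^2/\alpha_0$. Suppose $u=0$, so $u_n\to0$ in every $L_A^p(\mathbb{R}^2)$, and set $\ell:=\lim\|u_n\|_H^2$ along a subsequence. Since $F\geq0$ and $I(u_n)=\tfrac12\|u_n\|_H^2-\int AF(u_n)+o(1)$, we have $\ell\geq2c$. For the reverse inequality I would fix $s\geq s_0$ and split $\int AF(u_n)$ over $\{u_n\leq s\}$ and $\{u_n>s\}$: on the first set $(f_4^*)$ gives $F(u_n)\leq C_su_n^2$, so that piece tends to $0$ as $n\to\infty$; on the second set $(f_2)$ and $f(\sigma)\leq s^{-1}\sigma f(\sigma)$ give $\int_{\{u_n>s\}}AF(u_n)\leq(K_0/s)\int_{\mathbb{R}^2}Af(u_n)u_n$. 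Hence $\tfrac{\ell}{2}-c=\lim\int AF(u_n)\leq(K_0/s)\,\ell$ for all $s\geq s_0$, and letting $s\to\infty$ yields $\ell\leq2c$; thus $\ell=2c<4\pi\zeta^2/\alpha_0$. This strict bound on $\limsup\|u_n\|_H^2$ allows Lemma \ref{Lema 3} to apply with an exponent $r\alpha$, $r>1$, $\alpha>\alpha_0$ chosen close to $\alpha_0$, so $(e^{\alpha u_n^2}-1)$ is bounded in $L^r(A\,\textrm{d}x)$; since $u_n\to0$ in $L_A^{qr'}(\mathbb{R}^2)$, Hölder's inequality gives $\int Af(u_n)u_n\to0$, whence $\ell=0$, contradicting $c>0$. (Alternatively one may run this argument on $v_n:=u_n-u$ after a Brezis--Lieb splitting of the nonlinear terms, using Corollary \ref{Corolario 1}.) Therefore $u$ is a nonnegative nontrivial weak solution of $(\widehat{P})$. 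If in addition $(\widehat{f_4})$ holds, then for each $v\in H$ with $v\geq0$, $v\neq0$, the map $t\mapsto\int_{\mathbb{R}^2}A\,\tfrac{f(tv)}{tv}v^2\,\textrm{d}x$ is nondecreasing on $(0,\infty)$ by $(\widehat{f_4})$, tends to $0$ as $t\to0^+$ by $(f_4^*)$ and to $+\infty$ as $t\to\infty$; hence $t\mapsto I(tv)$ has a unique positive critical point, which is its maximum, yielding the Nehari characterization $c=\inf\{I(w):w\in H\setminus\{0\},\ I'(w)w=0\}$. Since $u$ lies on this Nehari set, $I(u)\geq c$, while Fatou's lemma applied to the integrands $A(\tfrac12 f(u_n)u_n-F(u_n))$, nonnegative by $(\widehat{f_3})$ and $\widehat{\theta_0}>2$, gives $I(u)=\int_{\mathbb{R}^2}A(\tfrac12 f(u)u-F(u))\,\textrm{d}x\leq\liminf\big(I(u_n)-\tfrac12 I'(u_n)u_n\big)=c$; hence $I(u)=c$ and $u$ is a ground state.
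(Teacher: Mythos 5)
Your proposal is correct, and the overall skeleton (mountain pass geometry, the minimax estimate $c<2\pi\zeta^2/\alpha_0$ obtained from the normalized minimizer of $S_{q_0}$ and the definition of $\widehat{C}_{q_0}$, boundedness of the Palais--Smale sequence via $(\widehat{f_3})$, identification of the weak limit as a critical point through the de Figueiredo--Miyagaki--Ruf convergence lemma, and the Nehari/fibering argument for the ground state property under $(\widehat{f_4})$) matches the paper. The genuine difference is in how compactness is recovered from the level estimate. The paper does not argue by contradiction on $u_0=0$: it sets $\rho_0=\lim\|u_n\|$, uses $J(u_0)\ge 0$ and $c^{**}<2\pi\zeta^2/\alpha_0$ to deduce $\alpha_0\rho_0^2<4\pi\zeta^2/(1-\|u_0/\rho_0\|^2)$, and then invokes the Lions-type Corollary \ref{Corolario 1} applied to $v_n=u_n/\|u_n\|$ to show $\int A f(u_n)(u_n-u_0)\to 0$, which forces $\|u_0\|=\rho_0$ and hence \emph{strong} convergence $u_n\to u_0$ in $H$ and $J(u_0)=c^{**}$ in one stroke. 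You instead assume $u=0$, prove the energy identity $\lim\|u_n\|^2=2c<4\pi\zeta^2/\alpha_0$ via the splitting over $\{u_n\le s\}$ and $\{u_n>s\}$ together with $(f_2)$, and then apply the uniform Trudinger--Moser bound of Lemma \ref{Lema 3} directly to $u_n$ to get $\int Af(u_n)u_n\to 0$, a contradiction with $c>0$. Your route is more elementary in that it bypasses Corollary \ref{Corolario 1} entirely, but it yields only $u\neq 0$ rather than strong convergence, so you must (and correctly do) supply the extra Fatou-plus-Nehari step to recover $I(u)=c$ for the ground state conclusion; the paper's route buys $I(u_0)=c^{**}$ for free. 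Both arguments hinge on the same threshold $2\pi\zeta^2/\alpha_0$ and are valid.
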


\begin{theorem}\label{Teo. exist. sol. eq. Schr. caso zeta=1}
Suppose that $(b_1)-(b_2)$, $(\widehat{b_3})$, $(A_1)-(A_2)$, $(f_1)-(f_2)$, $(\widehat{f_3})$ and $(f_4^*)$ are satisfied. Suppose also that 
\begin{itemize}
\item[$(\widehat{f_6})$] there exists $\widehat{\gamma_0} > 0$ such that 
$$\liminf_{s \rightarrow \infty} \dfrac{sf(s)}{e^{\alpha_0 s^2}} \geq \widehat{\gamma_0} > 4\alpha_0^{-1}\inf_{R>0}\left\{R^{-2}e^{R^2M_R/2}\right\}.$$
\end{itemize}
Then problem $(\widehat{P})$ has a nonnegative nontrivial weak solution. If, in addition, $f$ satisfies $(\widehat{f_4})$, the solution is \textit{ground state}. 
\end{theorem}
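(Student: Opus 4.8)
To prove Theorem \ref{Teo. exist. sol. eq. Schr. caso zeta=1}, the plan is to obtain the solution as a mountain pass critical point of the energy functional $I:H\to\mathbb{R}$,
$$I(u)=\frac12\|u\|_H^2-\int_{\mathbb{R}^2}A(x)F(u)\,\textrm{d}x,$$
which, by $(f_1)$, $(f_4^*)$, the embeddings $H\hookrightarrow L_A^p(\mathbb{R}^2)$ (resting on $(A_1)-(A_2)$) and the Trudinger--Moser inequality on $H$ (Lemma \ref{Lema 3}), is well defined and of class $C^1$, with critical points coinciding with the weak solutions of $(\widehat{P})$; since $f\equiv 0$ on $(-\infty,0]$, testing with $u^-$ shows every nontrivial critical point is nonnegative. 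First I would check the mountain pass geometry: from $(f_1)$ and $(f_4^*)$ one has $F(s)\le\ep s^2+C_\ep|s|^q(e^{\alpha s^2}-1)$ for each $\alpha>\alpha_0$ and $q>2$, so Lemma \ref{Lema 3} and the weighted embeddings yield $I(u)\ge\delta>0$ on a small sphere $\|u\|_H=\rho$; and since $(\widehat{f_3})$ forces $F(s)/s^{\widehat{\theta_0}}$ to be nondecreasing with $\widehat{\theta_0}>2$, hence $F$ superquadratic, $I(t\varphi)\to-\infty$ as $t\to\infty$ for any fixed $\varphi\in H$ with $\varphi>0$ on a set of positive measure. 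Let $c>0$ denote the resulting minimax level.

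The crux is to prove that $c<2\pi/\alpha_0$ (recall that $(\widehat{b_3})$ gives $\zeta=1$). By $(\widehat{f_6})$, fix $R>0$ with $\widehat{\gamma_0}>(4/\alpha_0)R^{-2}e^{R^2M_R/2}$ and then $\ep>0$ small with $\pi R^2(\widehat{\gamma_0}-\ep)e^{-R^2M_R/2}>4\pi/\alpha_0$. Take the Moser functions $w_n$ concentrated at the origin, supported in $B_R(0)$ and normalized by $\|\nabla w_n\|_{L^2(\mathbb{R}^2)}^2=1$; a direct computation gives $\|w_n\|_{L^2(B_R(0))}^2=\frac{R^2}{4\log n}(1+o(1))$, whence, by $(\widehat{b_3})$ and $b\le M_R$ on $B_R(0)$, $\|w_n\|_H^2\le 1+\frac{M_RR^2}{4\log n}(1+o(1))$; consequently $v_n:=w_n/\|w_n\|_H$ equals on $B_{R/n}(0)$ a constant $c_n$ with $c_n^2=\frac{\log n}{2\pi}-\frac{M_RR^2}{8\pi}+o(1)$. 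Suppose, for contradiction, that $\max_{t\ge 0}I(tv_n)\ge 2\pi/\alpha_0$ along a subsequence, with $t_n>0$ a maximizer. Then $I(t_nv_n)\ge 2\pi/\alpha_0$ and $F\ge 0$ give $t_n^2\ge 4\pi/\alpha_0$, while the vanishing of the derivative of $t\mapsto I(tv_n)$ at $t_n$, together with $A\ge 1$ and $f\ge 0$, gives $t_n^2\ge\int_{B_{R/n}(0)}f(t_nc_n)(t_nc_n)\,\textrm{d}x=\pi R^2n^{-2}f(t_nc_n)(t_nc_n)$. Since $(t_nc_n)^2\to\infty$, $(\widehat{f_6})$ yields $f(t_nc_n)(t_nc_n)\ge(\widehat{\gamma_0}-\ep)e^{\alpha_0(t_nc_n)^2}$ for $n$ large, and inserting the expansion of $c_n^2$,
$$t_n^2\ \geq\ \pi R^2(\widehat{\gamma_0}-\ep)\,n^{\frac{\alpha_0 t_n^2}{2\pi}-2}\,e^{-\frac{\alpha_0 t_n^2 M_R R^2}{8\pi}(1+o(1))}.$$
Taking logarithms and using $t_n^2\ge 4\pi/\alpha_0$ rules out $t_n\to\infty$ (the right-hand side would then grow like $t_n^2\log n$ against $\log(t_n^2)$); so $t_n\to t_\infty<\infty$ along a subsequence, and $t_\infty^2>4\pi/\alpha_0$ is impossible because the factor $n^{\alpha_0 t_n^2/2\pi-2}$ would blow up against a bounded left-hand side, forcing $t_\infty^2=4\pi/\alpha_0$; but then $n^{\alpha_0 t_n^2/2\pi-2}\ge 1$ (as $t_n^2\ge 4\pi/\alpha_0$) and, letting $n\to\infty$, $4\pi/\alpha_0\ge\pi R^2(\widehat{\gamma_0}-\ep)e^{-R^2M_R/2}$, contradicting the choice of $R$ and $\ep$. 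Hence $\max_{t\ge 0}I(tv_n)<2\pi/\alpha_0$ for $n$ large, and, using the path $t\mapsto tv_n$ (admissible because $I(tv_n)\to-\infty$), $c\le\max_{t\ge 0}I(tv_n)<2\pi/\alpha_0$.

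With the level bound available, I would take a Palais--Smale sequence $(u_n)\subset H$ at the level $c$, whose boundedness follows from $(\widehat{f_3})$ since $I(u_n)-\widehat{\theta_0}^{-1}I'(u_n)u_n\ge(\tfrac12-\widehat{\theta_0}^{-1})\|u_n\|_H^2$ with $\widehat{\theta_0}>2$; passing to a subsequence, $u_n\rightharpoonup u$ in $H$ and $u_n\to u$ in $L^p(\mathbb{R}^2)$ for every $p\ge 2$ and a.e., by the compact embeddings. Hypothesis $(b_2)$ prevents loss of mass at infinity, and, combining the Trudinger--Moser inequality (Lemma \ref{Lema 3}), the Lions-type concentration-compactness result (Corollary \ref{Corolario 1}), the growth conditions $(f_1)$, $(f_2)$, $(\widehat{f_3})$ and the bound $c<2\pi/\alpha_0=2\pi\zeta^2/\alpha_0$, one shows in a by-now-standard way that $f(u_n)\to f(u)$ and $\int A(x)F(u_n)\to\int A(x)F(u)$, so that $u$ is a weak solution of $(\widehat{P})$; and $u\equiv 0$ is ruled out because in that case the same ingredients would force $c=0$. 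Therefore $u$ is a nonnegative nontrivial weak solution of $(\widehat{P})$. Finally, if $(\widehat{f_4})$ also holds, then $s\mapsto f(s)/s$ is positive and nondecreasing, so for every $w$ with $w^+\neq 0$ the map $t\mapsto I(tw)$ has a unique positive maximum, the Nehari manifold $\mathcal N=\{w\in H\setminus\{0\}:I'(w)w=0\}$ is a natural constraint with $\inf_{\mathcal N}I=c$, and every nontrivial solution belongs to $\mathcal N$; since $I(u)=c$ (by $\int AF(u_n)\to\int AF(u)$, weak lower semicontinuity and $u\in\mathcal N$), $u$ is a ground state --- this is the counterpart for $(\widehat{P})$ of the Nehari argument based on Lemma \ref{Lema 9}.

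I expect the main obstacle to be the level estimate of the second paragraph: obtaining the sharp expansion $\|w_n\|_H^2=1+\frac{M_RR^2}{4\log n}(1+o(1))$ --- the $M_R$-correction being precisely what produces the factor $e^{R^2M_R/2}$ in $(\widehat{f_6})$ --- and then carrying out the blow-up analysis of the maximizers $t_n$. A secondary difficulty is ruling out the loss of compactness (local concentration) for the Palais--Smale sequence at the critical threshold, which is exactly the role of Lemma \ref{Lema 3}, Corollary \ref{Corolario 1} and the bound $c<2\pi/\alpha_0$.
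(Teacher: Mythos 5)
Your proposal follows essentially the same route as the paper: mountain pass geometry, the Moser-function estimate $c^{**}<2\pi/\alpha_0$ exploiting the $M_R$-correction in $\|\widetilde{G}_n\|_H$ (your blow-up analysis of the maximizers $t_n$ using only the inner ball $B_{R/n}(0)$ is a correct, slightly leaner variant of the paper's Proposition \ref{estim. nivel PM} specialized to $m\equiv 1$, and it produces the same constant $4\alpha_0^{-1}R^{-2}e^{R^2M_R/2}$), then recovery of compactness for the Palais--Smale sequence below the threshold via Lemma \ref{Lema 3} and Corollary \ref{Corolario 1}, and finally the Nehari argument of Lemma \ref{Lema 8} for the ground state property under $(\widehat{f_4})$. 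The one step you defer to ``standard arguments'' is precisely where the paper does its remaining work --- setting $\rho_0:=\lim\|u_n\|$, assuming $\|u_0\|<\rho_0$, deducing $\alpha_0\rho_0^2<4\pi/(1-\|u_0/\rho_0\|^2)$ from the level bound and $J(u_0)\ge 0$, and then using Corollary \ref{Corolario 1} to force $m(\rho_0^2)(\rho_0^2-\|u_0\|^2)=0$, hence strong convergence and $J(u_0)=c^{**}>0$ --- but you have correctly identified all the ingredients needed for it.
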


To our knowledge there is no paper on Kirchhoff equations in unbounded domains under $(b_1)-(b_3)$, even with nonlinearity having polynomial growth. But on Schr\"odinger equations involving exponential growth, we can cite \cite{Sou,SouDoOSil}. In \cite{Sou}, the author studied the nonhomogeneous singular problem
\begin{equation}\label{eq. Schr. sing. nao-homog.}
-\Delta u + b(x)u = \dfrac{g(x)f(u)}{|x|^a} + h(x), \ \ x \in \mathbb{R}^2,
\end{equation}
with $b$ satisfying $(b_1)-(b_3)$, $f$ having subcritical exponential growth and $a \in [0,2)$. In \cite{SouDoOSil}, the authors studied the nonhomogeneous quasilinear problem
\begin{equation}\label{eq. Schr. quasil. nao-homog.}
-\Delta_N u + b(x)|u|^{N-2}u = c(x)|u|^{N-2}u + g(x)f(u) + \varepsilon h(x), \ \ x \in \mathbb{R}^N,
\end{equation}
where $\Delta_N u = \textrm{div}(|\nabla u|^{N-2}\nabla u)$, $N \geq 2$, with $b$ and $f$ satisfying hypotheses similar to $(b_1),(b_2),(\widehat{b_3})$ and $(f_1),(f_2),(\widehat{f_3}),(f_4^*),(\widehat{f_5})$, respectively. The potential $c$ was taken nonnegative and belonging to an apropriated Lebesgue space, with norm, in this space, bounded by a suitable constant. Notice that, for certain sign-changing potentials $b$, this hypothesis does not include the case in which $b$ is replaced by $b^+$ and $c(x)=b^-(x):=\max\{0,-b(x)\}$ in equation \eqref{eq. Schr. quasil. nao-homog.}. Actually, although $b^+$ satisfies $(b_1),(b_2),(\widehat{b_3})$ whenever $b$ satisfies $(b_1)-(b_3)$, powers of $b^-$ may not be integrable, as for example $b(x)=|x_1x_2|-C$ given previously. For $h \not\equiv 0$ with small norm in an apropriated dual space, two solutions were obtained in \cite{Sou} and \cite{SouDoOSil} for problems \eqref{eq. Schr. sing. nao-homog.} and \eqref{eq. Schr. quasil. nao-homog.}, respectively.

With the potential $b$ satisfying hypotheses similar to \eqref{b coercivo}, we also refer to \cite{LiYang}, forn a Kirchhoff equation, and \cite{DoOMedSev,Yang1}, for Schr\"odinger equations. Other related results can be founded in \cite{AlvSoa,Aou,DoOSouMedSev,DoOSanZha,FeiYin}. On Kirchhoff equations in bounded domains, we refer to \cite{FigSev,GoyMisSre,NaiTar}. All of these papers deal with critical or subcritical exponential growth of Trudinger-Moser type. 

In addition to the aspects already mentioned, our results complement the aforementioned works in other ways: with the exception of \cite{FigSev}, in the other papers it was not proved the existence of \textit{ground state} solutions; differently of \cite{AlvSoa,Aou,DoOMedSev,DoOSouMedSev,DoOSanZha,FeiYin,LiYang,Yang1}, we consider a potential that may change  sign or vanish; in these same papers and in \cite{Sou}, the regularity of the potential is stronger than that considered here; in \cite{Sou,SouDoOSil}, it was assumed that the weight function $g$ in equations \eqref{eq. Schr. sing. nao-homog.} and \eqref{eq. Schr. quasil. nao-homog.} satisfies hypotheses similar to $(A_1)$ and $(A_2)$, but the regularity on $A$ is stronger than here; finally, although in \cite{SouDoOSil} it has been considered a potential $b$ of the same type as ours, the Trudinger-Moser inequality proved here is more general and allow us to consider the more natural hypotheses $(f_6)$ and $(\widehat{f_6})$, instead of $(f_5)$ and $(\widehat{f_5})$.

The rest of this paper is organized as follows: in Section \ref{Resultados preliminares} we prove preliminary results related to Trudinger-Moser inequality; in Section \ref{estr. variac.} we detail the variational framework of problem $(P)$; in Section \ref{estim. minimax} we prove estimates for the Mountain Pass level of the energy functional; finally, in the last section we prove our main results.

%%%%%%%%%%%%%%%%%%%%%%%%%%%%%%%%%%%%%%%%%%%%%%%%%%%%%%%%%%%%%%%%%%%%%%%%%%%%%%%%%%%%%%%%%%%% Seção 2
\section{Preliminary results}\label{Resultados preliminares}

Hereafter, we  write $\int_{\Omega}u$ instead of $\int_{\Omega} u(x) \textrm{d}x$, for any $\Omega \subset \mathbb{R}^2$ and $u \in L^1(\Omega)$. Norms in $H$, in $W^{1,2}(\mathbb{R}^2)$ and in $L^p(\mathbb{R}^2)$, $1 \leq p \leq \infty$, are denoted by $\left\|\cdot\right\|$, $\left\|\cdot\right\|_{1,2}$ and $\left\|\cdot\right\|_p$, respectively. Notations $C_1, C_2, \dots$ represent positive constants whose exact values are irrelevant. Hypotheses $(b_1)-(b_3)$, $(A_1)-(A_2)$ are always be assumed from now on.

The next result was proved in \cite{DoO} (see also \cite{Cao}).

%%%%%%%%%%%%%%%%%%%%%%%%%%%%%%%%%%%%% Lema 1
\begin{lemma}\label{Lema 1}
If $\alpha > 0$ and $v \in W^{1,2}(\mathbb{R}^2)$, then 
$\int_{\mathbb{R}^2} (e^{\alpha v^2} - 1) < \infty.$
Moreover, if $\alpha < 4\pi$, $\left\|\nabla v\right\|_2 \leq 1$ and $\left\|v\right\|_2 \leq M$, then there exists  $C=C(\alpha,M)>0$ such that 
$$\int_{\mathbb{R}^2} (e^{\alpha v^2} - 1) \leq C.$$
\end{lemma}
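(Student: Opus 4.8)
The plan is to prove the quantitative bound (the ``Moreover'' assertion) first and then derive the finiteness statement from it, following the approach of do \'O \cite{DoO} (see also \cite{Cao}). For the bound, I would first reduce to radial functions by Schwarz symmetrization: if $v^{*}$ denotes the spherically symmetric nonincreasing rearrangement of $|v|$, then $\|v^{*}\|_{2}=\|v\|_{2}$, $\|\nabla v^{*}\|_{2}\leq\|\nabla v\|_{2}$ by the P\'olya--Szeg\H{o} inequality, and $\int_{\mathbb{R}^{2}}(e^{\alpha (v^{*})^{2}}-1)=\int_{\mathbb{R}^{2}}(e^{\alpha v^{2}}-1)$ because $s\mapsto e^{\alpha s^{2}}-1$ is nonnegative, vanishes at $0$ and is strictly increasing, so this integral depends only on the distribution function of $|v|$. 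Hence one may assume $v\geq 0$ is radial and nonincreasing with $\|\nabla v\|_{2}\leq 1$ and $\|v\|_{2}\leq M$.

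Next I would split $\mathbb{R}^{2}$ into the unit ball $B_{1}$ and its complement. Writing $\bar v:=v(1)$ (well defined, since a radial $W^{1,2}$ function is continuous away from the origin), the bound $M^{2}\geq\int_{B_{1}}v^{2}\geq\pi\bar v^{2}$ gives $\bar v\leq M/\sqrt{\pi}$. On $B_{1}$ the function $g:=v-\bar v$ is nonnegative, vanishes on $\partial B_{1}$, hence $g\in W_{0}^{1,2}(B_{1})$ with $\|\nabla g\|_{L^{2}(B_{1})}\leq\|\nabla v\|_{2}\leq 1$; by Young's inequality $v^{2}\leq(1+\ep)g^{2}+(1+\ep^{-1})\bar v^{2}$ for each $\ep>0$, so choosing $\ep=\ep(\alpha)>0$ with $\alpha(1+\ep)<4\pi$ (possible since $\alpha<4\pi$) and applying the classical Trudinger--Moser inequality on $B_{1}$ (\cite{Mos,Tru}) to $g$ yields $\int_{B_{1}}(e^{\alpha v^{2}}-1)\leq C(\alpha,M)$. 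On $\mathbb{R}^{2}\setminus B_{1}$ monotonicity forces $0\leq v\leq\bar v\leq M/\sqrt{\pi}$, hence $e^{\alpha v^{2}}-1\leq C(\alpha,M)\,v^{2}$ there (since $t\mapsto(e^{\alpha t}-1)/t$ is nondecreasing), and $\int_{\mathbb{R}^{2}\setminus B_{1}}(e^{\alpha v^{2}}-1)\leq C(\alpha,M)\|v\|_{2}^{2}\leq C(\alpha,M)$. Adding the two pieces proves the ``Moreover'' part.

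For the finiteness statement, fix $v\in W^{1,2}(\mathbb{R}^{2})$ and $\alpha>0$, set $M:=\|v\|_{2}$ and, for $L>0$, $\Omega_{L}:=\{|v|>L\}$. On $\{|v|\leq L\}$ one has $e^{\alpha v^{2}}-1\leq C_{L}\,v^{2}\in L^{1}(\mathbb{R}^{2})$. On $\Omega_{L}$ consider $w:=(|v|-L)^{+}\in W^{1,2}(\mathbb{R}^{2})$, for which $\|w\|_{2}\leq M$, $\|\nabla w\|_{2}^{2}=\int_{\Omega_{L}}|\nabla v|^{2}=:\delta(L)$ and $v^{2}=(w+L)^{2}\leq 2w^{2}+2L^{2}$ on $\Omega_{L}$. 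Since $|\Omega_{L}|\leq M^{2}/L^{2}\to 0$ and $|\nabla v|^{2}\in L^{1}(\mathbb{R}^{2})$, absolute continuity of the integral forces $\delta(L)\to 0$; fix $L$ so large that $\delta(L)<\min\{1,2\pi/\alpha\}$ (if $\delta(L)=0$ then $w\equiv 0$ and $\Omega_{L}$ is null, so there is nothing to estimate). Applying the bound just proved to $w/\sqrt{\delta(L)}$ with exponent $2\alpha\delta(L)<4\pi$ gives $\int_{\mathbb{R}^{2}}(e^{2\alpha w^{2}}-1)<\infty$, and therefore
$$
\int_{\Omega_{L}}(e^{\alpha v^{2}}-1)\leq e^{2\alpha L^{2}}\Bigl(\int_{\mathbb{R}^{2}}(e^{2\alpha w^{2}}-1)+|\Omega_{L}|\Bigr)<\infty .
$$
Combining this with the estimate on $\{|v|\leq L\}$ yields $\int_{\mathbb{R}^{2}}(e^{\alpha v^{2}}-1)<\infty$.

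The one genuinely delicate point is the symmetrization step: one must check that $v^{*}$ is an admissible competitor — finiteness of its Dirichlet energy via P\'olya--Szeg\H{o}, the invariance of $\int(e^{\alpha v^{2}}-1)$ via equimeasurability, and the (harmless but technical) possible logarithmic blow-up at the origin of a radial $W^{1,2}$ function, which has to be controlled so that $g=v-\bar v$ genuinely belongs to $W_{0}^{1,2}(B_{1})$ — and then invoke the classical Trudinger--Moser inequality on $B_{1}$ in its subcritical form. The remaining ingredients (the Sobolev chain rule for $w$, absolute continuity of the integral, the monotonicity of $t\mapsto(e^{\alpha t}-1)/t$) are routine.
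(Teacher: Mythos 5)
Your proof is correct. The paper does not prove Lemma \ref{Lema 1} itself --- it only cites do \'O \cite{DoO} and Cao \cite{Cao} --- and your argument (Schwarz symmetrization, Moser's inequality on $B_1$ applied to $v-v(1)$, the elementary quadratic bound outside $B_1$, and the truncation $w=(|v|-L)^{+}$ with $\int_{\{|v|>L\}}|\nabla v|^2\to 0$ for the finiteness claim) is essentially the standard proof found in those references.
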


We need a version of this last result adapted to our variational framework. We start with a technical result.

%%%%%%%%%%%%%%%%%%%%%%%%%%%%%%%%%%%%% Lema 2
\begin{lemma}\label{Lema 2}
Let $\beta_0$ be given by hypothesis $(A_2)$ and $\alpha > 0$. For any $v \in H$ and  $r \in [1,\beta_0)$, the function $A(\cdot)^r(e^{\alpha v^2} - 1)^r$ belongs to $L^1(\mathbb{R}^2)$.   
\end{lemma}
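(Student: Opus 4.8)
The plan is to reduce the claim to an application of Lemma~\ref{Lema 1}, exploiting the embedding $H\hookrightarrow W^{1,2}(\mathbb{R}^2)$ and the growth condition $(A_2)$ on $A$. Fix $v\in H$ and $r\in[1,\beta_0)$. Since $v\in W^{1,2}(\mathbb{R}^2)$, the first part of Lemma~\ref{Lema 1} gives $\int_{\mathbb{R}^2}(e^{\alpha v^2}-1)<\infty$; more generally, applying Lemma~\ref{Lema 1} with $\alpha$ replaced by $r\alpha>0$ yields $\int_{\mathbb{R}^2}(e^{r\alpha v^2}-1)<\infty$, hence $(e^{\alpha v^2}-1)^r\in L^1(\mathbb{R}^2)$ because $(e^{t}-1)^r\le e^{rt}-1$ for $t\ge 0$ (indeed $0\le e^t-1\le e^t$, so $(e^t-1)^r\le e^{rt}$; one must be slightly careful near $t=0$, but on a set where $e^{\alpha v^2}-1\le 1$ we can also bound $(e^{\alpha v^2}-1)^r\le e^{\alpha v^2}-1$ since $r\ge 1$, so in all cases $(e^{\alpha v^2}-1)^r\le (e^{\alpha v^2}-1)+(e^{r\alpha v^2}-1)$, which is integrable).

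Next I would handle the weight $A(\cdot)^r$. Split $\mathbb{R}^2=B_{R_0}(0)\cup(\mathbb{R}^2\setminus B_{R_0}(0))$ with $R_0$ as in $(A_2)$. On the ball $B_{R_0}(0)$, since $A\in L^\infty_{\mathrm{loc}}(\mathbb{R}^2)$, $A$ is bounded there, so $\int_{B_{R_0}(0)}A^r(e^{\alpha v^2}-1)^r\le \|A\|_{L^\infty(B_{R_0}(0))}^r\int_{B_{R_0}(0)}(e^{\alpha v^2}-1)^r<\infty$ by the previous paragraph. On the complement, $(A_2)$ gives $A(x)^r\le C_0^r\big(1+(b^+(x))^{1/\beta_0}\big)^r\le 2^{r-1}C_0^r\big(1+(b^+(x))^{r/\beta_0}\big)$ (convexity of $t\mapsto t^r$), so it suffices to show that both $\int_{\mathbb{R}^2\setminus B_{R_0}(0)}(e^{\alpha v^2}-1)^r$ and $\int_{\mathbb{R}^2\setminus B_{R_0}(0)}(b^+)^{r/\beta_0}(e^{\alpha v^2}-1)^r$ are finite. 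The first is already done; for the second, since $r/\beta_0<1$, I apply Young's (or Hölder's) inequality with exponents $\beta_0/r>1$ and its conjugate $\beta_0/(\beta_0-r)$:
\[
(b^+)^{r/\beta_0}(e^{\alpha v^2}-1)^r \le \frac{r}{\beta_0}\,b^+\,(e^{\alpha v^2}-1)^{0}\cdot(\text{something})
\]
— more cleanly, write $(b^+)^{r/\beta_0}(e^{\alpha v^2}-1)^r=\big(b^+(e^{\alpha v^2}-1)^{\beta_0}\big)^{r/\beta_0}\cdot 1$ and apply Hölder, or directly: by Young, $(b^+)^{r/\beta_0}(e^{\alpha v^2}-1)^r\le \tfrac{r}{\beta_0}b^+(e^{\alpha v^2}-1)^{?}+\ldots$. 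The cleanest route is Hölder with exponents $p=\beta_0/r$ and $p'=\beta_0/(\beta_0-r)$ applied to the product $(b^+)^{r/\beta_0}\cdot(e^{\alpha v^2}-1)^r$, giving
\[
\int_{\mathbb{R}^2\setminus B_{R_0}(0)}(b^+)^{r/\beta_0}(e^{\alpha v^2}-1)^r
\le \Big(\int_{\mathbb{R}^2}b^+ v^2\Big)^{r/\beta_0}\Big(\int_{\mathbb{R}^2}(e^{\alpha v^2}-1)^{r\beta_0/(\beta_0-r)}\, v^{-2\beta_0/(\beta_0-r)}\Big)^{(\beta_0-r)/\beta_0},
\]
which is awkward because of the $v^{-2}$ factor; so instead I would simply use Young's inequality $ab\le \tfrac{a^{p}}{p}+\tfrac{b^{p'}}{p'}$ with $a=(b^+)^{r/\beta_0}$, $b=(e^{\alpha v^2}-1)^r$, $p=\beta_0/r$, $p'=\beta_0/(\beta_0-r)$, yielding the pointwise bound
\[
(b^+)^{r/\beta_0}(e^{\alpha v^2}-1)^r\le \frac{r}{\beta_0}\,b^+ + \frac{\beta_0-r}{\beta_0}\,(e^{\alpha v^2}-1)^{r\beta_0/(\beta_0-r)}.
\]
Now $\int_{\mathbb{R}^2}b^+\,v^2<\infty$ is not quite $\int b^+$; the point is that $b^+$ itself need not be integrable, so I must carry the factor $v^2$ through. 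The fix: bound $b^+\le b^+v^2$ only where $v^2\ge 1$, and on $\{v^2<1\}$ use instead $(b^+)^{r/\beta_0}(e^{\alpha v^2}-1)^r\le (b^+)^{r/\beta_0}(e^{\alpha}-1)^r$ combined with $\mathrm{meas}$ — no, that set may have infinite measure.

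The genuinely correct argument, which I would present, keeps the factor $e^{\alpha v^2}-1$ paired with $b^+$ via the elementary inequality $t\le e^{t}-1$ for $t\ge 0$ applied to $t=\ep v^2$: choosing a small $\ep>0$ with $\ep<\alpha$ and $\ep$ small enough, $b^+v^2\le \ep^{-1}b^+(e^{\ep v^2}-1)$, and then since $r/\beta_0<1$ one estimates
\[
(b^+)^{r/\beta_0}(e^{\alpha v^2}-1)^r\le (b^+ v^2)^{r/\beta_0}\,v^{-2r/\beta_0}(e^{\alpha v^2}-1)^r
\]
— still the $v^{-2}$ nuisance. The clean resolution: use Young with the roles chosen so that $b^+$ is paired with $v^2$, namely $a=(b^+v^2)^{r/\beta_0}$, $b=v^{-2r/\beta_0}(e^{\alpha v^2}-1)^r$, $p=\beta_0/r$; then $a^p=b^+v^2\in L^1$ since $v\in H$, and $b^{p'}=v^{-2\beta_0/(\beta_0-r)}(e^{\alpha v^2}-1)^{r\beta_0/(\beta_0-r)}$, and on $\{v^2\ge \delta\}$ this is $\le \delta^{-\beta_0/(\beta_0-r)}(e^{\alpha v^2}-1)^{r\beta_0/(\beta_0-r)}\in L^1$ by Lemma~\ref{Lema 1}; on $\{0<v^2<\delta\}$ one has $e^{\alpha v^2}-1\le \alpha v^2 e^{\alpha\delta}$, so $v^{-2\beta_0/(\beta_0-r)}(e^{\alpha v^2}-1)^{r\beta_0/(\beta_0-r)}\le (\alpha e^{\alpha\delta})^{r\beta_0/(\beta_0-r)}v^{2(r-1)\beta_0/(\beta_0-r)}$ which, since $r\ge 1$, is bounded on that set, and the set $\{0<v^2<\delta\}$ intersected with $\mathbb{R}^2\setminus B_{R_0}$ carries a finite $b^+v^2$-mass bounding things — at this point it is cleanest to instead restrict the whole Young estimate to $\{v\neq 0\}$ and absorb the $v\equiv 0$ set trivially.

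Given the delicacy above, I expect the main obstacle to be exactly this bookkeeping: pairing the non-integrable weight $b^+$ with a factor of $v^2$ (which is what $H$-membership controls, via $\int b^+v^2\le B_0\|v\|_2^2+\|v\|^2<\infty$) while simultaneously keeping enough of the exponential factor to apply Lemma~\ref{Lema 1}, all the while respecting that $r/\beta_0<1$ is what makes the Young/Hölder split work. A streamlined version: on $\mathbb{R}^2\setminus B_{R_0}(0)$ write, using $r\le \beta_0$ componentwise and the pointwise inequality valid for $t\ge0$, $r\ge1$,
\[
A(x)^r(e^{\alpha v^2}-1)^r\le 2^{r-1}C_0^r(e^{\alpha v^2}-1)^r + 2^{r-1}C_0^r(b^+)^{r/\beta_0}(e^{\alpha v^2}-1)^r,
\]
bound the first summand by Lemma~\ref{Lema 1} (with $r\alpha$), and for the second apply Hölder with exponents $\beta_0/r$ and $\beta_0/(\beta_0-r)$ to the pair $\big(b^+(e^{\alpha v^2}-1)^{\beta_0}\big)^{r/\beta_0}$ and $\big((e^{\alpha v^2}-1)^{r}\big)^{0}$— i.e.\ simply Hölder $\int (b^+)^{r/\beta_0}g^r\le (\int b^+ g^{\beta_0})^{r/\beta_0}(\int g^{\beta_0 r/(\beta_0-r)}\cdot 1)^{(\beta_0-r)/\beta_0}$ is not valid without matching the second factor; the honest statement is $\int (b^+)^{r/\beta_0}g^r \le (\int b^+ g^{r\beta_0/(r\cdot 1)} )^{r/\beta_0}\cdots$. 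To avoid this spiral in the final writeup I would simply invoke: $(b^+)^{r/\beta_0}(e^{\alpha v^2}-1)^r \le C\big[b^+(e^{\alpha v^2}-1) + (e^{\alpha v^2}-1)^{r\beta_0/(\beta_0-r)}\big]$ by Young with exponents $\beta_0/r,\ \beta_0/(\beta_0-r)$ applied to $u=(b^+)^{r/\beta_0},\ w=(e^{\alpha v^2}-1)^r$ after the substitution $(e^{\alpha v^2}-1)^r=(e^{\alpha v^2}-1)^{r/\beta_0}\cdot(e^{\alpha v^2}-1)^{r(\beta_0-1)/\beta_0}$, so that $uw=\big(b^+(e^{\alpha v^2}-1)\big)^{r/\beta_0}(e^{\alpha v^2}-1)^{r(\beta_0-1)/\beta_0}\le \tfrac{r}{\beta_0}b^+(e^{\alpha v^2}-1)+\tfrac{\beta_0-r}{\beta_0}(e^{\alpha v^2}-1)^{r(\beta_0-1)/(\beta_0-r)}$. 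Then $\int b^+(e^{\alpha v^2}-1)<\infty$ follows from $e^{\alpha v^2}-1\le C_\ep(e^{\ep v^2}\!-\!1)+ \alpha v^2$-type splitting together with $b^+v^2\in L^1$ and Lemma~\ref{Lema 1}, and $\int(e^{\alpha v^2}-1)^{r(\beta_0-1)/(\beta_0-r)}<\infty$ again by Lemma~\ref{Lema 1}. Combining the ball and complement estimates gives $A^r(e^{\alpha v^2}-1)^r\in L^1(\mathbb{R}^2)$, as claimed.
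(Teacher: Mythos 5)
There is a genuine gap at the decisive step. Your final, ``streamlined'' argument applies Young's inequality with exponents $\beta_0/r$ and $\beta_0/(\beta_0-r)$ to $u=\bigl(b^+(e^{\alpha v^2}-1)\bigr)^{r/\beta_0}$, which restores the \emph{full} power of $b^+$ and reduces the lemma to the claim $\int_{\mathbb{R}^2} b^+(e^{\alpha v^2}-1)<\infty$. That claim does not follow from the hypotheses and is false in general: the only integrability of $b^+$ available is $\int b^+v^2\le (1+B_0/\lambda_1^b)\|v\|^2<\infty$, and one can choose admissible $b$ and $v\in H$ (e.g.\ $b^+=c_n$ on small balls where $v\approx n$, with $c_n n^2 r_n^2$ summable but $c_n r_n^2 e^{\alpha n^2}$ not) for which $\int b^+v^2<\infty$ while $\int b^+(e^{\alpha v^2}-1)=\infty$. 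The proposed splitting $e^{\alpha v^2}-1\le C_\ep(e^{\ep v^2}-1)+\alpha v^2$ cannot rescue this: for the inequality to hold where $v$ is large you need $\ep\ge\alpha$, and then $\int b^+(e^{\ep v^2}-1)$ has exactly the same defect. The entire point of the strict inequality $r<\beta_0$ is that $b^+$ must only ever appear at the fractional power $r/\beta_0<1$, paired with $v^{2r/\beta_0}$, so that the sole quantity needed is $\bigl(\int b^+v^2\bigr)^{r/\beta_0}$; your Young step destroys precisely this structure.

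The paper closes the argument differently: after reducing to $\int_{\mathbb{R}^2\setminus B_{R_0}} A^r(e^{r\alpha v^2}-1)$, it expands the exponential as $\sum_m \frac{(r\alpha)^m}{m!}\int A^r v^{2m}$ and applies H\"older \emph{term by term} with exponents $\beta_0/r$ and $\beta_0/(\beta_0-r)$ to $(b^+)^{r/\beta_0}v^{2m}=(b^+v^2)^{r/\beta_0}\,v^{2m-2r/\beta_0}$, bounding each term by $C\|v\|^{2m}$ (the leftover exponent $2(m\beta_0-r)/(\beta_0-r)\ge 2$ is handled by the embedding $H\hookrightarrow L^p$) and resumming to $C(e^{r\alpha\|v\|^2}-1)$. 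Incidentally, the route you started and then abandoned because of the ``$v^{-2}$ nuisance'' --- H\"older applied to $(b^+v^2)^{r/\beta_0}\cdot v^{-2r/\beta_0}(e^{\alpha v^2}-1)^r$ --- can in fact be made to work: the second factor raised to the conjugate exponent behaves like $v^{2r(\beta_0-1)/(\beta_0-r)}$ near $\{v=0\}$ (a power $\ge 2$ since $r\ge 1$) and is controlled by Lemma~\ref{Lema 1} where $v$ is large. So the gap is fixable, but the proof as actually written does not close; it also leaves in several false starts and non-inequalities that should be excised from any final version.
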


\begin{proof} Since $
(e^{\alpha s^2} - 1)^r \leq e^{r\alpha s^2} - 1$, for any $s \in \mathbb{R}$, and $A \in L_{\textrm{loc}}^{\infty}(\mathbb{R}^2)$ we get
\begin{equation}\label{des. 1 Lema 2}
\int_{\mathbb{R}^2} A(x)^r(e^{\alpha v^2} - 1)^r \leq \int_{\mathbb{R}^2 \backslash B_{R_0}(0)} A(x)^r(e^{r\alpha v^2} - 1) + C_1\int_{B_{R_0}(0)} (e^{r\alpha v^2} - 1),
\end{equation}
where $R_0 > 0$ is given by hypothesis $(A_2)$. From Lemma \ref{Lema 1} we conclude that the last integral above is finite. In order to estimate the first one, notice that
\begin{equation}\label{id. 1 Lema 2}
\int_{\mathbb{R}^2 \backslash B_{R_0}(0)} A(x)^r(e^{r\alpha v^2} - 1) = \sum_{m=1}^{\infty} \dfrac{(r\alpha)^m}{m!}\int_{\mathbb{R}^2 \backslash B_{R_0}(0)} A(x)^r v^{2m}.
\end{equation}
Now, by $(A_2)$ and H\"older's inequality, we have that 
\begin{eqnarray}
&& \int_{\mathbb{R}^2 \backslash B_{R_0}(0)} A(x)^r v^{2m} \leq \nonumber \\ 
&& \leq C_2\left\|v\right\|_{2m}^{2m} + C_3\int_{\mathbb{R}^2 \backslash B_{R_0}(0)} (b^+(x))^{r/\beta_0} v^{2m} \label{estim. termo geral serie de taylor} \\
&& \leq C_2\left\|v\right\|_{2m}^{2m} + C_3\left(\int_{\mathbb{R}^2} b^+(x)v^2\right)^{r/\beta_0}\left(\int_{\mathbb{R}^2} v^{2(m\beta_0-r)/(\beta_0-r)}\right)^{(\beta_0-r)/\beta_0}. \nonumber
\end{eqnarray}
But, by $(b_3)$ and $(b_1)$,
\begin{eqnarray*}
	\int_{\mathbb{R}^2} b^+(x)v^2 
	&=& \int_{\mathbb{R}^2} b(x)v^2 - \int_{\{b(x) \leq 0\}} b(x)v^2 
	\leq  \left\|v\right\|^2 + B_0\left\|v\right\|_2^2 \\
	&\leq& \left\|v\right\|^2 + B_0\dfrac{\left\|v\right\|^2}{\lambda_1^b} = C_4\left\|v\right\|^2.
\end{eqnarray*}
This and \eqref{estim. termo geral serie de taylor} imply that
\begin{equation}\label{des. 2 Lema 2}
\begin{array}{lll}
\displaystyle \int_{\mathbb{R}^2 \backslash B_{R_0}(0)} A(x)^r v^{2m} &\leq& C_5\left\|v\right\|^{2m} + C_6\left\|v\right\|^{2r/\beta_0}\left\|v\right\|^{2(m\beta_0-r)/\beta_0} \\
&=& C_7\left\|v\right\|^{2m},
\end{array}
\end{equation}
where we have used that $\min\{2m, \,2(m\beta_0-r)/(\beta_0-r)\} \geq 2$ and $H$ is continuously embbeded into $L^p(\mathbb{R}^2)$, for any $p \geq 2$. Therefore, from \eqref{des. 1 Lema 2}, \eqref{id. 1 Lema 2} and \eqref{des. 2 Lema 2} we obtain
\begin{eqnarray}
\int_{\mathbb{R}^2} A(x)^r(e^{\alpha v^2} - 1)^r 
&\leq& C_7\sum_{m=1}^{\infty} \dfrac{1}{m!}(r\alpha\left\|v\right\|^2)^m + C_1\int_{B_{R_0}(0)} (e^{r\alpha v^2} - 1) \nonumber \\
&=& C_7(e^{r\alpha\left\|v\right\|^2} - 1) + C_1\int_{B_{R_0}(0)} (e^{r\alpha v^2} - 1) \label{des. 3 Lema 2} \\
&<& \infty, \nonumber 
\end{eqnarray}
which completes the proof.
\end{proof}

The following lemma is a version of Lemma \ref{Lema 1} for our framework.

%%%%%%%%%%%%%%%%%%%%%%%%%%%%%%%%%%%%% Lema 3
\begin{lemma}\label{Lema 3}
Let $\alpha > 0$, $q>0$ and $\omega, v \in H$. Then 
$$\int_{\mathbb{R}^2} A(x)|\omega|^q(e^{\alpha v^2} - 1) < \infty.$$
Moreover, if $\alpha < 4\pi\zeta^2$ and $\left\|v\right\| \leq 1$, then there exists  $C=C(\alpha,q)>0$ such that 
$$\int_{\mathbb{R}^2} A(x)|\omega|^q(e^{\alpha v^2} - 1) \leq C\left\|\omega\right\|^q.$$
\end{lemma}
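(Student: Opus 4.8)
The plan is to use Hölder's inequality to split the integrand $A(x)|\omega|^q(e^{\alpha v^2}-1)$ into a product of the form $|\omega|^q$ against $A(x)^{r}(e^{\alpha v^2}-1)^{r}$ for a suitable exponent $r\in[1,\beta_0)$, so that the second factor is controlled by Lemma \ref{Lema 2} and the first by the continuous embedding $H\hookrightarrow L^p(\mathbb{R}^2)$. Concretely, writing $A(x)|\omega|^q(e^{\alpha v^2}-1) = \bigl(A(x)^{1/r}(e^{\alpha v^2}-1)\bigr)\cdot\bigl(A(x)^{1-1/r}|\omega|^q\bigr)$ and applying Hölder with exponents $r$ and $r'=r/(r-1)$, the first factor raised to the $r$-th power is $A(x)^{1}(e^{\alpha v^2}-1)^r \le A(x)^r(e^{\alpha v^2}-1)^r$ (since $A(x)\ge 1$), which is integrable by Lemma \ref{Lema 2}; the second factor raised to the $r'$-th power involves $A(x)^{(r-1)r'/r}|\omega|^{qr'} = A(x)|\omega|^{qr'}$, which lies in $L^1$ because $\omega\in L_A^{qr'}(\mathbb{R}^2)$ — here one needs $qr'\ge 2$, which can be arranged by taking $r$ close enough to $1$, or else one handles small $q$ separately as below. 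This gives the finiteness assertion for all $\alpha>0$, $q>0$.

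For the quantitative bound when $\alpha<4\pi\zeta^2$ and $\|v\|\le 1$, I would first reduce to the case $\|\omega\|=1$ by homogeneity: replacing $\omega$ by $\omega/\|\omega\|$ pulls out exactly $\|\omega\|^q$. So it suffices to show $\int_{\mathbb{R}^2}A(x)|\omega|^q(e^{\alpha v^2}-1)\le C$ whenever $\|\omega\|\le 1$ and $\|v\|\le 1$. Choose $r\in(1,\beta_0)$ small enough that $r\alpha<4\pi\zeta^2$ as well; this is possible since $\alpha<4\pi\zeta^2$. Then apply Hölder as above: the second factor $\left(\int A(x)|\omega|^{qr'}\right)^{1/r'}$ is bounded by $C\|\omega\|^q\le C$ using the embedding $H\hookrightarrow L_A^{qr'}(\mathbb{R}^2)$ (valid for $qr'\ge 2$). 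The first factor $\left(\int A(x)^r(e^{\alpha v^2}-1)^r\right)^{1/r}$ is bounded using the explicit estimate \eqref{des. 3 Lema 2} from the proof of Lemma \ref{Lema 2}, namely by $\left(C_7(e^{r\alpha\|v\|^2}-1) + C_1\int_{B_{R_0}(0)}(e^{r\alpha v^2}-1)\right)^{1/r}$; since $\|v\|\le 1$ and $r\alpha<4\pi\zeta^2$, we have $r\alpha\|v\|^2\le r\alpha<4\pi\zeta^2$, so the first term is bounded by a constant, and for the second term we use \eqref{imersao H em D^(1,2)} to get $\|\nabla v\|_2\le \|v\|/\zeta\le 1/\zeta$, whence $r\alpha v^2\le (r\alpha/(4\pi\zeta^2))\cdot 4\pi\|\nabla v\|_2^{-2}v^2\cdot\|\nabla v\|_2^2$ — more cleanly, rescale $\tilde v = v/\|\nabla v\|_2$ so that $\|\nabla\tilde v\|_2=1$ and $\int_{B_{R_0}}(e^{r\alpha v^2}-1)=\int_{B_{R_0}}(e^{r\alpha\|\nabla v\|_2^2\tilde v^2}-1)$ with $r\alpha\|\nabla v\|_2^2\le r\alpha/\zeta^2<4\pi$, then Lemma \ref{Lema 1} (together with the $L^2$-bound on $v$, hence on $\tilde v$ up to the norm equivalence on $H$) gives a uniform bound $C$.

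The remaining nuisance is the constraint $qr'\ge 2$ needed to invoke the embedding into $L_A^{qr'}$: if $q$ is small this forces $r'$ large, i.e. $r$ close to $1$, which is fine for finiteness but one must still keep $r\alpha<4\pi\zeta^2$ for the quantitative part — and this is automatic since we only need $r>1$ arbitrarily close to $1$. If $q<2$ and even $qr'<2$ for all admissible $r<\beta_0$ (which can happen when $\beta_0$ is close to $1$), I would instead interpolate: bound $|\omega|^q$ pointwise on the set $\{|\omega|\le 1\}$ by $|\omega|^2$ and on $\{|\omega|>1\}$ by, say, $|\omega|^{q_1}$ for some $q_1\ge 2$, splitting the integral accordingly and treating each piece by the Hölder argument above with possibly different $r$. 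The main obstacle, then, is purely bookkeeping: choosing $r$ (and in the degenerate case, the auxiliary exponents) so that simultaneously $1\le r<\beta_0$, $r\alpha<4\pi\zeta^2$, and the $\omega$-factor lands in a Lebesgue/weighted-Lebesgue space into which $H$ embeds — there is no conceptual difficulty once Lemmas \ref{Lema 1} and \ref{Lema 2} are in hand.
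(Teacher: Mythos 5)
Your proposal follows essentially the same route as the paper's proof: H\"older's inequality with exponents $r\in(1,\beta_0)$ and $r'=r/(r-1)$, Lemma \ref{Lema 2} for the factor carrying the exponential, the embedding of $H$ into (weighted) Lebesgue spaces for the $|\omega|^q$ factor, and a rescaling plus Lemma \ref{Lema 1} for the uniform bound; the slightly different distribution of the weight $A$ between the two H\"older factors is immaterial since $A\geq 1$. Two points, however. First, the ``degenerate case'' you set up ($qr'<2$ for every admissible $r<\beta_0$) never occurs: $r'\to\infty$ as $r\to 1^+$, and $r$ close to $1$ is always admissible no matter how close $\beta_0$ is to $1$, so $qr'\geq 2$ can always be arranged simultaneously with $r\alpha<4\pi\zeta^2$. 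The interpolation detour is unnecessary; you have the dependence of $r'$ on $r$ backwards there. Second, and more substantively, the rescaling $\tilde v=v/\|\nabla v\|_2$ does not yield a uniform $L^2$ bound on $\tilde v$: there is no Poincar\'e-type inequality on $\mathbb{R}^2$ controlling $\|v\|_2$ by $\|\nabla v\|_2$, so $\|\tilde v\|_2=\|v\|_2/\|\nabla v\|_2$ can be arbitrarily large even with $\|v\|\leq 1$, and the constant $C(\alpha,M)$ from Lemma \ref{Lema 1} would then fail to be uniform over the family of admissible $v$. The correct normalization is the paper's: rescale by the \emph{fixed} constant $\zeta$, writing $v^2=\zeta^{-2}(\zeta v)^2$; then $\|\nabla(\zeta v)\|_2\leq\|v\|\leq 1$ by \eqref{imersao H em D^(1,2)}, $\|\zeta v\|_2\leq C\zeta\|v\|\leq M$ with $M$ independent of $v$ by the embedding $H\hookrightarrow L^2(\mathbb{R}^2)$, and the exponent $r\alpha\zeta^{-2}<4\pi$, so Lemma \ref{Lema 1} applies with a constant depending only on $\alpha$ and $q$.
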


\begin{proof} Let $r \in (1,\beta_0)$ be such that $qr'\geq 2$, where $r':=r/(r-1)$. By H\"older's inequality, embbeding $H \hookrightarrow L^{qr'}(\mathbb{R}^2)$ and Lemma \ref{Lema 2} we obtain
\begin{equation}\label{des. 1 Lema 3}
\begin{array}{lll}
\displaystyle \int_{\mathbb{R}^2} A(x)|\omega|^q(e^{\alpha v^2} - 1) 
&\leq& \displaystyle \left\|\omega\right\|_{qr'}^q \left(\int_{\mathbb{R}^2} A(x)^r(e^{\alpha v^2} - 1)^r\right)^{1/r} \\
&\leq& \displaystyle C_1\left\|\omega\right\|^q\left(\int_{\mathbb{R}^2} A(x)^r(e^{\alpha v^2} - 1)^r\right)^{1/r},
\end{array}
\end{equation}
and the first statement is proved.

If $\alpha < 4\pi\zeta^2$ and $\left\|v\right\| \leq 1$, take $r \in (1,\beta_0)$ such that $r\alpha < 4\pi\zeta^2$. By using  \eqref{des. 3 Lema 2}-\eqref{des. 1 Lema 3} and writing $v^2=\zeta^{-2}(\zeta v)^2$, we have that 
$$
\int_{\mathbb{R}^2} A(x)|\omega|^q(e^{\alpha v^2} - 1) \leq C_2\left\|\omega\right\|^q\left(e^{r\alpha\left\|v\right\|^2} - 1 + \int_{B_{R_0}(0)} (e^{r\alpha\zeta^{-2}(\zeta v)^2} - 1)\right)^{1/r}.
$$
Since $\left\|v\right\| \leq 1$, by \eqref{imersao H em D^(1,2)} we have $\left\|\nabla(\zeta  v)\right\|_2 \leq 1$. Furthermore, $\left\|\zeta v\right\|_2 \leq C_3\zeta\left\|v\right\| \leq M$, for some $M > 0$ independent of $v$. The result follows from Lemma \ref{Lema 1}, the above inequality and $r\alpha\zeta^{-2} < 4\pi$. 
\end{proof}

We present now a version of a famous result of Lions \cite[subsection I.7]{PLio}  to our space $H$.

%%%%%%%%%%%%%%%%%%%%%%%%%%%%%%%%%%%%% Corolário 1
\begin{corollary}\label{Corolario 1}
Let $q > 0$ and let $(\omega_n), (v_n) \subset H$ be such that $(\omega_n)$ is bounded in $H$, $v_n \rightharpoonup v$ weakly in $H$ and $\left\|v_n\right\|=1$, for any $n \in \mathbb{N}$. Then, if $\left\|v\right\| < 1$, for any $0 < p < 4\pi\zeta^2/(1-\left\|v\right\|^2)$ it holds 
$$\sup_{n\in\mathbb{N}}\int_{\mathbb{R}^2} A(x)|\omega_n|^q(e^{pv_n^2} - 1) < \infty.$$
The same holds if $\left\|v\right\|=1$ and $0 < p < \infty$.
\end{corollary}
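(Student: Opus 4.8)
The plan is to reduce the statement to an application of Lemma~\ref{Lema 3}. The point is that Lemma~\ref{Lema 3} gives a uniform bound $\int_{\mathbb{R}^2} A(x)|\omega|^q(e^{\alpha w^2}-1)\le C\|\omega\|^q$ whenever $\|w\|\le 1$ and $\alpha<4\pi\zeta^2$; so I want to rescale $v_n$ by a factor slightly smaller than $1$ and absorb the loss into the gap between $p$ and $4\pi\zeta^2/(1-\|v\|^2)$. First I would treat the case $\|v\|<1$. By the Brezis--Lieb-type splitting of the norm induced by weak convergence in the Hilbert space $H$, we have $\|v_n-v\|^2 = \|v_n\|^2 - \|v\|^2 + o(1) = 1-\|v\|^2 + o(1)$. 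Hence, given $p<4\pi\zeta^2/(1-\|v\|^2)$, one may fix $\rho>0$ with $\|v_n-v\|\le \rho < 1$ for all large $n$ and with $p\rho^2 < 4\pi\zeta^2$ (both are possible because $p(1-\|v\|^2)<4\pi\zeta^2$ and $\|v_n-v\|^2\to 1-\|v\|^2$).

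Next I would use the elementary inequality $(a+b)^2 \le (1+\tfrac1\ep)b^2 + (1+\ep)a^2$ to write, for any $\ep>0$,
$$
pv_n^2 = p(v + (v_n-v))^2 \le p(1+\ep)(v_n-v)^2 + p(1+\tfrac1\ep)v^2,
$$
and then $e^{pv_n^2}-1 \le e^{pv_n^2} \le e^{p(1+\ep)(v_n-v)^2}\,e^{p(1+1/\ep)v^2}$. Splitting the remaining exponential factor by Young's inequality with conjugate exponents $t,t'$,
$$
e^{p(1+\ep)(v_n-v)^2}\,e^{p(1+1/\ep)v^2} \le \tfrac1t e^{tp(1+\ep)(v_n-v)^2} + \tfrac1{t'} e^{t'p(1+1/\ep)v^2},
$$
and then applying H\"older with the same pair to split $A(x)|\omega_n|^q = A(x)^{1/t}|\omega_n|^{q/t}\cdot A(x)^{1/t'}|\omega_n|^{q/t'}$, one bounds $\int_{\mathbb{R}^2} A(x)|\omega_n|^q(e^{pv_n^2}-1)$ by a product of two factors, each of the form appearing in Lemma~\ref{Lema 3}. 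Choosing $\ep>0$ small and $t>1$ close to $1$ so that $t(1+\ep)p\rho^2 < 4\pi\zeta^2$ still (possible since $p\rho^2<4\pi\zeta^2$ strictly), the first factor is $\int_{\mathbb{R}^2} A(x)|\omega_n|^{q}(e^{\alpha w_n^2}-1 + 1)$ with $w_n := (v_n-v)/\|v_n-v\|$ normalized, $\|w_n\|=1$, and $\alpha := t(1+\ep)p\|v_n-v\|^2 \le t(1+\ep)p\rho^2 < 4\pi\zeta^2$, so Lemma~\ref{Lema 3} (together with boundedness of $(\omega_n)$ in $H$ and $L_A^{q}(\mathbb R^2)$) gives a uniform bound; the second factor involves only the fixed function $v\in H$, hence is finite by the first (qualitative) part of Lemma~\ref{Lema 3}. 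This yields $\sup_n \int_{\mathbb{R}^2} A(x)|\omega_n|^q(e^{pv_n^2}-1)<\infty$.

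For the case $\|v\|=1$: weak convergence with $\|v_n\|=1$ and $\|v\|=1$ forces $v_n\to v$ strongly in $H$, so $\|v_n-v\|\to 0$ and the same argument (with the roles of the two factors reversed — now the small factor is the one carrying $v_n-v$, and $v_n^2 \le (1+\ep)v^2 + (1+1/\ep)(v_n-v)^2$ with the $(v_n-v)$-exponent tamed by the strong convergence) shows the supremum is finite for every finite $p$; here one simply needs $(1+1/\ep)p\|v_n-v\|^2<4\pi\zeta^2$ eventually, which holds for each fixed $\ep$. The main obstacle is purely bookkeeping: arranging the three small parameters $\ep$, $\rho$ and the H\"older exponent $t$ so that all the exponential-integrability thresholds $4\pi\zeta^2$ are respected simultaneously, and checking that the $\omega_n$-norms stay controlled after the H\"older splitting (which is immediate from $(A_1)$ and the embeddings, since $q/t + q/t' = q$). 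No genuinely new estimate beyond Lemmas~\ref{Lema 1}--\ref{Lema 3} is needed.
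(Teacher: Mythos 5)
Your proposal follows essentially the same route as the paper: split $v_n^2\le(1+\ep)(v_n-v)^2+(1+\ep^{-1})v^2$, separate the two exponentials by Young/H\"older with exponents close to $1$ and its conjugate, use $\|v_n-v\|^2=\|v_n\|^2-\|v\|^2+o(1)=1-\|v\|^2+o(1)$ to get the $(v_n-v)$-exponent below the threshold $4\pi\zeta^2$ after normalization, and invoke Lemma~\ref{Lema 3} for that factor and the qualitative part of Lemmas~\ref{Lema 2}--\ref{Lema 3} for the factor carrying the fixed function $v$.

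One technical point deserves care. You drop the ``$-1$'' at the start ($e^{pv_n^2}-1\le e^{pv_n^2}$) and reinstate it at the end by writing $e^{\alpha w_n^2}=(e^{\alpha w_n^2}-1)+1$, which leaves you with the extra term $\int_{\mathbb{R}^2}A(x)|\omega_n|^q$. For $q\ge 2$ this is controlled by the embedding $H\hookrightarrow L_A^q(\mathbb{R}^2)$, but the corollary is stated for all $q>0$, and for $0<q<2$ there is no such embedding ($A$ is only locally bounded and may grow like a power of $b^+$), so that integral need not be finite; the subtraction of $1$ is precisely what makes the integrand small where $v_n$ is small. The paper avoids this by exploiting that the Young weights sum to $1$: from $A|\omega_n|^qe^{pv_n^2}\le \tfrac1{r_1}A|\omega_n|^qe^{r_1p(1+\ep^2)(v_n-v)^2}+\tfrac1{r_2}A|\omega_n|^qe^{r_2p(1+\ep^{-2})v^2}$ one may subtract $A|\omega_n|^q=\tfrac1{r_1}A|\omega_n|^q+\tfrac1{r_2}A|\omega_n|^q$ from both sides and keep every term in the form $\int A|\omega_n|^q(e^{\alpha(\cdot)^2}-1)$, to which Lemmas~\ref{Lema 2} and~\ref{Lema 3} apply directly. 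With that adjustment (and the routine remark that the finitely many indices $n\le n_0$ are covered by the first part of Lemma~\ref{Lema 3}), your argument is complete; your treatment of the case $\|v\|=1$ via strong convergence is also fine, though the paper handles both cases uniformly since $p(1-\|v\|^2)=0<4\pi\zeta^2$ there.
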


\begin{proof} First of all notice that, given $a,b \in \mathbb{R}$ and $\varepsilon > 0$, by Young's inequality we have 
\begin{eqnarray*}
a^2 &=& (a-b)^2 + b^2 + 2\varepsilon(a-b)b\varepsilon^{-1} \\
    &\leq& (a-b)^2 + b^2 + 2\left(\dfrac{\varepsilon^2(a-b)^2}{2} + \dfrac{b^2\varepsilon^{-2}}{2}\right) \\
    &=& (1+\varepsilon^2)(a-b)^2 + (1+\varepsilon^{-2})b^2.
\end{eqnarray*}
Thus, if $r_1,r_2 > 1$ are such that $1/r_1 + 1/r_2 = 1$, by using Young's inequality again we obtain
\begin{eqnarray*}
A(x)|\omega_n|^q e^{pv_n^2} &\leq& (A(x)|\omega_n|^q)^{1/r_1}e^{p(1+\varepsilon^2)(v_n - v)^2}(A(x)|\omega_n|^q)^{1/r_2}e^{p(1+\varepsilon^{-2})v^2} \\
							&\leq& \dfrac{1}{r_1}A(x)|\omega_n|^q e^{r_1p(1+\varepsilon^2)(v_n - v)^2} + \dfrac{1}{r_2}A(x)|\omega_n|^q e^{r_2p(1+\varepsilon^{-2})v^2}.
\end{eqnarray*}
So,
\begin{eqnarray*}
\int_{\mathbb{R}^2} A(x)|\omega_n|^q(e^{pv_n^2} - 1)
&\leq& \dfrac{1}{r_1}\int_{\mathbb{R}^2} A(x)|\omega_n|^q\left(e^{r_1p(1+\varepsilon^2)(v_n - v)^2} - 1\right) \\
&& + \ \dfrac{1}{r_2}\int_{\mathbb{R}^2} A(x)|\omega_n|^q\left(e^{r_2p(1+\varepsilon^{-2})v^2} - 1\right).
\end{eqnarray*}
Since $(\omega_n)$ is bounded in $H$, inequality \eqref{des. 1 Lema 3} with $\alpha = r_2p(1+\varepsilon^{-2})$ and Lemma \ref{Lema 2} guarantee that the second integral on the right-hand side above is bounded independently of $n$. In order to estimate the other integral notice that, since $\left\|v_n\right\| = 1$ and $v_n \rightharpoonup v$ weakly in $H$, we get
$$
\lim_{n\rightarrow\infty} p\left\|v_n - v\right\|^2 =  p(1 - \left\|v\right\|^2) < 4\pi\zeta^2.
$$
Then, by taking $r_1>1$ sufficiently close to 1 and $\varepsilon>0$ sufficiently small, there exists $n_0 \in \mathbb{N}$ such that
$$r_1p(1+\varepsilon^2)\left\|v_n - v\right\|^2 < 4\pi\zeta^2, \ \ \forall \ n>n_0.$$
Observing that $(v_n - v)^2=\|v_n - v\|^2((v_n - v)/\|v_n - v\|)^2$, from the above inequality and Lemma \ref{Lema 3} it follows that 
$$
\int_{\mathbb{R}^2} A(x)|\omega_n|^q\left(e^{r_1p(1+\varepsilon^2)(v_n - v)^2} - 1\right) \leq C_1\left\|\omega_n\right\|^q \leq C_2, \ \ \forall \ n>n_0,
$$
which concludes the proof.
\end{proof}

The next result is an easy consequence of the monotonicity conditions $(m_3)$ and $(f_4)$.

%%%%%%%%%%%%%%%%%%%%%%%%%%%%%%%%%%%%% Lema 9
\begin{lemma}\label{Lema 9} Suppose that $(m_3)$ and $(f_4)$ hold. Then
\begin{itemize}
	\item[\emph{$(i)$}] the function $L(t) := (1/2)M(t) - (1/4)m(t)t$ is increasing in $[0,\infty)$; in particular, $L(t) > L(0) = 0$, for any $t>0$;
	\item[\emph{$(ii)$}] the function $G(s) := sf(s) - 4F(s)$ is nondecreasing in $[0,\infty)$; in particular, $G(s) \geq G(0)=0$, for any $s>0$.
\end{itemize}
\end{lemma}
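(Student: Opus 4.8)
The plan is to prove both items by the same mechanism: turn the monotonicity of the ratios in $(m_3)$ and $(f_4)$ into pointwise bounds on the integrands $m(\tau)$ and $f(\tau)$, then integrate. Because $m$ and $f$ are only assumed continuous (not $C^1$), I would not differentiate $L$ and $G$; instead I work with the identities $M(t)=\int_0^t m(\tau)\,\textrm{d}\tau$ and $F(s)=\int_0^s f(\tau)\,\textrm{d}\tau$ and compare areas.

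For $(i)$, fix $0\le t_1<t_2$. Hypothesis $(m_3)$ gives $m(\tau)/\tau\ge m(t_2)/t_2$ for all $\tau\in(0,t_2]$, i.e.\ $m(\tau)\ge (m(t_2)/t_2)\,\tau$ there. Starting from
$$4\bigl(L(t_2)-L(t_1)\bigr)=2\int_{t_1}^{t_2}m(\tau)\,\textrm{d}\tau-\bigl(m(t_2)t_2-m(t_1)t_1\bigr)$$
and using $\int_{t_1}^{t_2}m(\tau)\,\textrm{d}\tau\ge \frac{m(t_2)}{2t_2}(t_2^2-t_1^2)$, the right-hand side collapses after a one-line computation to $t_1^2\bigl(m(t_1)/t_1-m(t_2)/t_2\bigr)\ge 0$, with strict inequality when $t_1>0$. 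For $t_1=0$ one gets $4L(t_2)=2M(t_2)-m(t_2)t_2$, and here the area comparison is strict because $m(\tau)\to m(0)>0$ (by $(m_1)$) forces $m(\tau)/\tau\to+\infty$ as $\tau\to0^+$, so $m(\tau)/\tau>m(t_2)/t_2$ on a subinterval of $(0,t_2)$. Hence $L$ is increasing on $[0,\infty)$ and $L(t)>L(0)=0$ for $t>0$.

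Item $(ii)$ is the exact analogue with cubes replacing linear weights. Fix $0\le s_1<s_2$; by $(f_4)$, $f(\tau)\le (f(s_2)/s_2^3)\,\tau^3$ for $\tau\in(0,s_2]$, so
$$G(s_2)-G(s_1)=\bigl(s_2f(s_2)-s_1f(s_1)\bigr)-4\int_{s_1}^{s_2}f(\tau)\,\textrm{d}\tau\ge \bigl(s_2f(s_2)-s_1f(s_1)\bigr)-\frac{f(s_2)}{s_2^3}\bigl(s_2^4-s_1^4\bigr),$$
and the right-hand side simplifies to $s_1^4\bigl(f(s_2)/s_2^3-f(s_1)/s_1^3\bigr)\ge0$; when $s_1=0$ it reads $s_2f(s_2)-4F(s_2)\ge s_2f(s_2)-f(s_2)s_2=0$. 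Therefore $G$ is nondecreasing on $[0,\infty)$ and $G(s)\ge G(0)=0$.

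There is no serious obstacle here; the only mild subtlety is the regularity remark above (mere continuity of $m$ and $f$), which is exactly why the integral-comparison argument, rather than a derivative computation, is the right tool. The rest is elementary algebra on the two displayed identities.
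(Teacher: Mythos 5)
Your proof is correct and uses essentially the same mechanism as the paper: both arguments bound $m(\tau)$ below by $\bigl(m(t_2)/t_2\bigr)\tau$ on $(t_1,t_2)$ via $(m_3)$, integrate, and reduce the difference to a nonnegative multiple of $m(t_1)/t_1 - m(t_2)/t_2$ (the paper writes the integrand as $\frac{m(\tau)}{\tau}\tau$, which is the identical estimate), with item $(ii)$ the cubic-weight analogue that the paper omits as "analogous." The only cosmetic difference is at $t_1=0$, where the paper simply invokes continuity of $\widehat{L}$ at $0$ rather than your (also valid) observation that $m(\tau)/\tau\to\infty$.
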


\begin{proof} We only prove the first item since the other one is analogous. Let $t_1,t_2 \in \mathbb{R}$ be such that $0<t_1<t_2$. By $(m_3)$, we have
\begin{eqnarray*}
2M(t_1) - m(t_1)t_1 &=& 2M(t_2) - 2\int_{t_1}^{t_2} \dfrac{m(\tau)}{\tau}\tau \ \textrm{d}\tau - \dfrac{m(t_1)}{t_1}t_1^2 \\
                    &<& 2M(t_2) - \dfrac{m(t_2)}{t_2}(t_2^2 - t_1^2) - \dfrac{m(t_2)}{t_2}t_1^2 \\ 
			              &=& 2M(t_2) - m(t_2)t_2.
\end{eqnarray*}
and therefore the function $\widehat{L}(t) = 4L(t)= 2M(t) - m(t)t$ is increasing in $(0,\infty)$. Continuity in $t=0$ implies that this property holds in $[0,\infty)$.  \end{proof}

We finish this section by presenting a convergence result proved in \cite{FigMiyRuf}.

%%%%%%%%%%%%%%%%%%%%%%%%%%%%%%%%%%%%% Lema 10
\begin{lemma}\label{Lema 10}
Let $\Omega \subset \mathbb{R}^2$ be a bounded domain. If $f : \overline{\Omega} \times \mathbb{R} \rightarrow \mathbb{R}$ is a continuous function and $(u_n) \subset L^1(\Omega)$ is a sequence such that 
$$ u_n \rightarrow u \ \textrm{in} \ L^1(\Omega), \ \ f(\cdot,u_n),f(\cdot,u) \in L^1(\Omega), \ \ \int_{\Omega} |f(x,u_n)u_n| \leq C,$$
where $C>0$ is a constant, then $f(\cdot,u_n) \rightarrow f(\cdot,u)$ in $L^1(\Omega)$.
\end{lemma}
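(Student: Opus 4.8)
The plan is to follow the standard Brezis–Lieb / generalized dominated convergence strategy for $L^1$-convergence of nonlinearities, using the uniform bound $\int_\Omega |f(x,u_n)u_n| \le C$ as a substitute for an integrable dominating function. First I would pass to a subsequence (still denoted $(u_n)$) along which $u_n \to u$ almost everywhere in $\Omega$, which is possible since $u_n \to u$ in $L^1(\Omega)$; by a standard argument it suffices to prove the $L^1$-convergence of $f(\cdot,u_n)$ along subsequences, so no generality is lost. Continuity of $f$ on $\overline\Omega \times \mathbb{R}$ then gives $f(x,u_n(x)) \to f(x,u(x))$ for a.e.\ $x \in \Omega$.

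Next I would split $\Omega$ according to the size of $|u_n|$. Fix $K>0$ and write
$$\int_\Omega |f(x,u_n) - f(x,u)| = \int_{\{|u_n|\le K\}} |f(x,u_n) - f(x,u)| + \int_{\{|u_n|> K\}} |f(x,u_n) - f(x,u)|.$$
On the set $\{|u_n|\le K\}$ the integrand is dominated by $2\max_{\overline\Omega\times[-K,K]}|f| \in L^1(\Omega)$ (here $|\Omega|<\infty$ is used), so by the a.e.\ convergence and the dominated convergence theorem this first piece tends to $0$ as $n\to\infty$, for each fixed $K$. For the tail over $\{|u_n|>K\}$, I would estimate $|f(x,u_n)| \le \tfrac{1}{K}|u_n f(x,u_n)|$ there, so that
$$\int_{\{|u_n|>K\}} |f(x,u_n)| \le \frac{1}{K}\int_\Omega |f(x,u_n)u_n| \le \frac{C}{K},$$
uniformly in $n$. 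For the remaining contribution $\int_{\{|u_n|>K\}}|f(x,u)|$, since $f(\cdot,u)\in L^1(\Omega)$ it suffices to show that $\mathrm{meas}(\{|u_n|>K\})$ is small uniformly for large $K$: this follows from Chebyshev's inequality, because $u_n \to u$ in $L^1$ implies $\sup_n \|u_n\|_{L^1(\Omega)} =: C' < \infty$, whence $\mathrm{meas}(\{|u_n|>K\}) \le C'/K$, and then the absolute continuity of the integral of the fixed $L^1$ function $f(\cdot,u)$ makes $\int_{\{|u_n|>K\}}|f(x,u)|$ uniformly small.

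Combining these, given $\varepsilon>0$ I would first choose $K$ large enough that the two tail bounds together are below $\varepsilon/2$ for all $n$, and then choose $n$ large enough that the $\{|u_n|\le K\}$ piece is below $\varepsilon/2$; this gives $\int_\Omega|f(x,u_n)-f(x,u)|<\varepsilon$ for all large $n$ along the subsequence, and the subsequence principle upgrades this to convergence of the full sequence. The only mildly delicate point — the main obstacle, such as it is — is handling the $\int_{\{|u_n|>K\}}|f(x,u)|$ term cleanly: one must combine a uniform measure estimate on $\{|u_n|>K\}$ (from the $L^1$-boundedness of $(u_n)$ via Chebyshev) with the absolute continuity of the Lebesgue integral of the single function $f(\cdot,u)\in L^1(\Omega)$, rather than trying to dominate $f(\cdot,u_n)$ pointwise. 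Everything else is routine measure theory, with finiteness of $|\Omega|$ and continuity of $f$ doing the bookkeeping.
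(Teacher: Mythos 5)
Your argument is essentially the standard one: the paper itself gives no proof of this lemma (it is quoted from \cite{FigMiyRuf}), and the proof in that reference proceeds exactly as you do, by passing to an a.e.\ convergent subsequence, splitting $\Omega$ according to $\{|u_n|\le K\}$ and $\{|u_n|>K\}$, using $|f(x,u_n)|\le K^{-1}|f(x,u_n)u_n|$ on the large set, and invoking dominated convergence on the bounded set together with the subsequence principle. Your treatment of the term $\int_{\{|u_n|>K\}}|f(x,u)|$ via Chebyshev plus absolute continuity of the integral is the right way to handle the only genuinely delicate point.

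One small slip: on $\{|u_n|\le K\}$ the integrand $|f(x,u_n)-f(x,u)|$ is \emph{not} dominated by $2\max_{\overline\Omega\times[-K,K]}|f|$, because the set only constrains $u_n$, not $u$; on part of that set $|u(x)|$ may exceed $K$. The correct dominating function is
$$\max_{\overline\Omega\times[-K,K]}|f| \;+\; |f(\cdot,u)|,$$
which lies in $L^1(\Omega)$ by the finiteness of $|\Omega|$, the continuity of $f$ on the compact set $\overline\Omega\times[-K,K]$, and the hypothesis $f(\cdot,u)\in L^1(\Omega)$. With that replacement the dominated convergence step goes through unchanged and the proof is complete.
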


%%%%%%%%%%%%%%%%%%%%%%%%%%%%%%%%%%%%%%%%%%%%%%%%%%%%%%%%%%%%%%%%%%%%%%%%%%%%%%%%%%%%%%%%%%%% Seção 3
\section{Variational framework}\label{estr. variac.}

Given $\varepsilon>0$, $\alpha>\alpha_0$ and $q \geq 1$, by $(f_1)$ and $(f_4^*)$ there exists a constant $C=C(\varepsilon,\alpha,q)>0$ such that
\begin{equation} \label{cresc. F}
\max\{|F(s)|,|sf(s)|\} \leq \varepsilon s^2 + C|s|^q(e^{\alpha s^2} - 1),\quad \forall \, s \in \mathbb{R}.
\end{equation}
This, the embbeding $H \hookrightarrow L_A^2(\mathbb{R}^2)$ and Lemma \ref{Lema 3} show that the functional $I:H \rightarrow \mathbb{R}$ given by
\begin{equation}\label{func. energia}
I(u) := \dfrac{1}{2}M(\left\|u\right\|^2) - \int_{\mathbb{R}^2} A(x)F(u), \ \ u \in H,
\end{equation}
is well defined. Moreover, Lemmas \ref{Lema 2}, \ref{Lema 3} and standard arguments show that $I \in C^1(H,\mathbb{R})$ and, for any $u,v \in H$, there holds
\begin{equation}\label{deriv. func. energia}
I'(u)v = m(\left\|u\right\|^2)\int_{\mathbb{R}^2}(\nabla u \cdot \nabla v + b(x)uv) - \int_{\mathbb{R}^2} A(x)f(u)v,
\end{equation}
and therefore critical points of $I$ are precisely the weak solutions of problem $(P)$.

%%%%%%%%%%%%%%%%%%%%%%%%%%%%%%%%%%%%% Lema 5
\begin{lemma}\label{Lema 5}
Suppose that $(m_1)$, $(f_1)$ and $(f_4^*)$ hold. Then there exists $\rho > 0$ and $\sigma > 0$ such that $$ I(u) \geq \sigma \ , \quad \forall \ u \in H,\, \left\|u\right\| = \rho.$$
\end{lemma}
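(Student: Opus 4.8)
The plan is to show that $I(u)$ is bounded below by a positive constant on a small sphere $\|u\| = \rho$. First I would use the growth estimate \eqref{cresc. F} to control the nonlinear part: fix $\alpha > \alpha_0$, a small $\varepsilon > 0$, and some exponent $q > 2$ (e.g. $q = 4$ to be safe), so that $F(s) \leq \varepsilon s^2 + C|s|^q(e^{\alpha s^2} - 1)$ for all $s$. Integrating against $A(x)$ and applying the embedding $H \hookrightarrow L_A^2(\mathbb{R}^2)$ to the first term gives $\int_{\mathbb{R}^2} A(x)\varepsilon u^2 \leq \varepsilon C_1 \|u\|^2$. For the second term, provided $\rho$ is small enough that $\|u\| = \rho \leq 1$ and we choose $\alpha$ close enough to $\alpha_0$ so that $\alpha \rho^2 < 4\pi\zeta^2$ — i.e., writing $u^2 = \|u\|^2 (u/\|u\|)^2$ and applying Lemma \ref{Lema 3} with the unit vector $u/\|u\|$ and exponent $\alpha\|u\|^2 < 4\pi\zeta^2$ — we get $\int_{\mathbb{R}^2} A(x)|u|^q(e^{\alpha u^2}-1) \leq C_2 \|u\|^q$. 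Hence $\int_{\mathbb{R}^2} A(x) F(u) \leq \varepsilon C_1 \|u\|^2 + C C_2 \|u\|^q$.

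Next I would bound the Kirchhoff term from below using $(m_1)$: since $m_0 = \inf_{t\geq 0} m(t) > 0$ and $M(t) = \int_0^t m(\tau)\,d\tau \geq m_0 t$, we have $\frac{1}{2} M(\|u\|^2) \geq \frac{m_0}{2}\|u\|^2$. Combining, for $\|u\| = \rho \leq 1$,
\begin{equation*}
I(u) \geq \frac{m_0}{2}\rho^2 - \varepsilon C_1 \rho^2 - C C_2 \rho^q = \left(\frac{m_0}{2} - \varepsilon C_1\right)\rho^2 - C C_2 \rho^q.
\end{equation*}
Now choose $\varepsilon$ small enough that $\frac{m_0}{2} - \varepsilon C_1 \geq \frac{m_0}{4}$; then since $q > 2$, the term $C C_2 \rho^q = o(\rho^2)$ as $\rho \to 0$, so there is a $\rho \in (0,1]$ small enough (also respecting $\alpha \rho^2 < 4\pi\zeta^2$, which is automatic once $\rho$ is small) such that $I(u) \geq \frac{m_0}{4}\rho^2 - C C_2 \rho^q \geq \frac{m_0}{8}\rho^2 =: \sigma > 0$ on $\|u\| = \rho$.

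The only delicate point — really a bookkeeping issue rather than a genuine obstacle — is making sure the constants are chosen in the right order: first fix $\alpha > \alpha_0$ and $q > 2$ (these enter $C$ in \eqref{cresc. F}), then fix $\varepsilon$ small relative to $m_0$ and $C_1$, and only at the very end shrink $\rho$; one must check that the constant $C = C(\varepsilon,\alpha,q)$ in \eqref{cresc. F} does not depend on $\rho$, which it does not, and that Lemma \ref{Lema 3} applies uniformly on the sphere, which it does since its constant depends only on $\alpha\|u\|^2$ and $q$, not on $u$ itself. No compactness or concentration-compactness is needed here; Corollary \ref{Corolario 1} is not required for this lemma. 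The argument is the standard mountain-pass-geometry estimate, adapted to the weighted space $L_A^2$ and the Kirchhoff lower bound $(m_1)$.
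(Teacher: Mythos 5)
Your proposal is correct and follows essentially the same route as the paper's proof: the growth estimate \eqref{cresc. F}, the embedding $H \hookrightarrow L_A^2(\mathbb{R}^2)$ for the quadratic term, Lemma \ref{Lema 3} applied to $u/\|u\|$ with exponent $\alpha\|u\|^2 < 4\pi\zeta^2$ for the exponential term, and the lower bound $M(t)\geq m_0 t$ from $(m_1)$, with the constants chosen in the same order. No discrepancies to report.
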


\begin{proof} Let $\varepsilon > 0$, $\alpha > \alpha_0$ and $q>2$. By \eqref{cresc. F}, the embbeding $H \hookrightarrow L_A^2(\mathbb{R}^2)$ and Lemma \ref{Lema 3}, if $0 < \rho_1 < (4\pi\zeta^2/\alpha)^{1/2}$, then for $u \in H$ with $\left\|u\right\| \leq \rho_1$ we have that
\begin{eqnarray*}
\int_{\mathbb{R}^2} A(x)F(u) 
&\leq& \varepsilon\int_{\mathbb{R}^2} A(x)u^2 + C\int_{\mathbb{R}^2} A(x)|u|^q(e^{\alpha u^2} - 1) \\
&\leq& \varepsilon C_1\left\|u\right\|^2 + C\int_{\mathbb{R}^2} A(x)|u|^q\left(e^{\alpha\rho_1^2 (u/\left\|u\right\|)^2} - 1\right) \\
&\leq& \varepsilon C_1\left\|u\right\|^2 + C_2\left\|u\right\|^q.
\end{eqnarray*}
Let $m_0>0$ be given by the hypothesis $(m_1)$. Since $M(t) \geq m_0 t$, for any $t \geq 0$, we obtain
$$I(u) \geq \left\|u\right\|^2 \left(\dfrac{m_0}{2} - \varepsilon C_1 - C_2\left\|u\right\|^{q-2}\right),$$
whenever $\left\|u\right\| \leq \rho_1$. Now choose $\varepsilon > 0$ and $0 < \rho \leq \rho_1$ such that $(m_0/2) - \varepsilon C_1 - C_2\rho^{q-2} > 0$. This choice is possible because $q>2$. Thereby, for any $u \in H$ with $\left\|u\right\| = \rho$, we have that $I(u) \geq \sigma$, where 
$$\sigma := \rho^2 \left(\dfrac{m_0}{2} - \varepsilon C_1 - C_2\rho^{q-2}\right) > 0.$$
This concludes the proof. 
\end{proof}

%%%%%%%%%%%%%%%%%%%%%%%%%%%%%%%%%%%%% Lema 6
\begin{lemma}\label{Lema 6}
Suppose that $(m_1), (m_3^*), (f_1), (f_3)$ and $(f_4^*)$ hold. If $\rho > 0$ is given by Lemma \ref{Lema 5}, then there exists $v_0 \in H$ such that $I(v_0) < 0$ and $\left\|v_0\right\| > \rho$.  
\end{lemma}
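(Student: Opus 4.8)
The plan is to exhibit a single direction in $H$ along which the energy tends to $-\infty$, and then take a large multiple of a fixed function in that direction as $v_0$. The natural candidate is a fixed nonnegative $\varphi \in C_0^\infty(\mathbb{R}^2)\cap H$ with $\varphi \not\equiv 0$ (for instance supported in a ball where $b$ is bounded, so that $\varphi \in H$ is automatic), and to study $I(t\varphi)$ as $t \to +\infty$.

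First I would control the Kirchhoff term from above. Hypothesis $(m_3^*)$ gives $m(s) \le a_1 s$ for $s \ge T$, hence $M(s) = M(T) + \int_T^s m(\tau)\,d\tau \le M(T) + \tfrac{a_1}{2}(s^2 - T^2) \le C_1 + C_2 s^2$ for all $s \ge 0$ (absorbing the bounded part on $[0,T]$ into the constant). Applying this with $s = \|t\varphi\|^2 = t^2\|\varphi\|^2$ yields
$$
\tfrac{1}{2}M(\|t\varphi\|^2) \le C_1 + C_2' t^4, \qquad t \ge 0.
$$
Next I would bound the nonlinear term from below. From $(f_3)$ with $\theta_0 > 4$ one gets, by the standard ODE comparison argument (integrating $\theta_0/s \le f(s)/F(s)$ for $s$ large), a constant $c_0 > 0$ and $s_1>0$ with $F(s) \ge c_0 s^{\theta_0}$ for all $s \ge s_1$; since $f \ge 0$, $F$ is nondecreasing, so $F(s) \ge c_0 s^{\theta_0} - C_3$ for all $s \ge 0$. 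Using $(A_1)$, i.e. $A \ge 1$, on a subset $\Omega' \subset \{x : \varphi(x) \ge \eta\}$ of positive measure,
$$
\int_{\mathbb{R}^2} A(x)F(t\varphi) \ge \int_{\Omega'} F(t\varphi) \ge c_0 t^{\theta_0}\int_{\Omega'}\varphi^{\theta_0} - C_4 =: c_1 t^{\theta_0} - C_4,
$$
with $c_1 > 0$. Combining the two estimates,
$$
I(t\varphi) \le C_1 + C_4 + C_2' t^4 - c_1 t^{\theta_0},
$$
and since $\theta_0 > 4$ this tends to $-\infty$ as $t \to +\infty$. Therefore there exists $t_0 > 0$ large enough that $I(t_0\varphi) < 0$ and simultaneously $\|t_0\varphi\| = t_0\|\varphi\| > \rho$; set $v_0 := t_0\varphi$.

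The only mildly delicate point is deriving the superquartic lower bound $F(s) \gtrsim s^{\theta_0}$ from $(f_3)$ alone; this is the classical Ambrosetti--Rabinowitz consequence and is routine, but one must be careful that $(f_3)$ is stated for all $s>0$ with $F(0)=0$, so the comparison can in fact be run from any $s_1>0$ down to $0$, giving $F(s) \ge F(s_1)(s/s_1)^{\theta_0}$ for $s \ge s_1$ and hence the clean bound above (with $F(s_1)>0$, which holds as soon as $f$ is not identically zero near $s_1$, guaranteed by $(f_1)$). No genuine obstacle arises; the exponential critical growth of $f$ is irrelevant here since we only need a lower bound, and $(m_3^*)$ (weaker than $(m_3)$, which itself forces $m(t) < m(1)t$ for $t>1$) is exactly what is needed to keep the Kirchhoff term from growing faster than quartically.
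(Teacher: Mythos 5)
Your proposal is correct and follows essentially the same route as the paper: bound $M$ quadratically via $(m_3^*)$ (so the Kirchhoff term grows at most like $t^4$), bound $F$ from below by $c_0 s^{\theta_0}-C$ via the Ambrosetti--Rabinowitz consequence of $(f_3)$, and let $t\to\infty$ along a fixed nonnegative compactly supported test function, using $\theta_0>4$. The only (harmless) differences are cosmetic — the paper keeps the explicit bound $M(t)\le a_0 t + a_1 t^2/2$ and integrates $F(tv)$ over the whole support rather than over a sublevel set $\{\varphi\ge\eta\}$ — and you are in fact slightly more careful in noting that $(f_1)$ is what guarantees $F(s_1)>0$.
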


\begin{proof} By the continuity of $m$ and $(m_3^*)$, there exists $a_0>0$ such that
\begin{equation}\label{cresc. M}
M(t) \leq a_0t + a_1\dfrac{t^2}{2}, \ \ \forall \ t \geq 0.
\end{equation}
On the other hand, by $(f_3)$, there exist constants $C_1,C_2>0$ such that 
$$F(s) \geq C_1s^{\theta_0} - C_2, \ \ \forall \ s \geq 0.$$
Now choose $v \in C_0(\mathbb{R}^2) \backslash \{0\}$ with $v \geq 0$ in $\mathbb{R}^2$. If $\Omega \subset \mathbb{R}^2$ contains the support of the function $v$, the above inequalities and $(A_1)$ provide, for any $t \geq 0$,
$$I(t v) \leq a_0t^2\dfrac{\left\|v\right\|^2}{2} + a_1 t^4\dfrac{\left\|v\right\|^4}{4} - C_1 t^{\theta_0}\int_{\Omega} v^{\theta_0} + C_2|\Omega|.$$
Since $\int_{\Omega} v^{\theta_0} > 0$ and $\theta_0 > 4$, we conclude that 
$I(t v) \to -\infty$, as $t \to \infty$. 
Hence the result holds for $v_0 = t_0 v$, with $t_0>0$ large enough. 
\end{proof}

\begin{remark}\label{prova alternativa lema 6}
For future reference we notice that the above lemma can be proved in a different way if $f(s)>0$ for anyl $s>0$. In this case, for any $w \in H$ with $w^+ \not\equiv 0$, we have $\int_{\mathbb{R}^2} A(x)F(w) > 0$. On the other hand, defining, for any $s \in \mathbb{R}$, 
$$\phi_s(t) := t^{-\theta_0}F(t s) - F(s), \ \ t > 0,$$
by $(f_3)$ we have that $\phi_s'(t) \geq 0$, for any $t > 0$. This implies that $\phi_s(t) \geq \phi_s(1) = 0$ for any $t \geq 1$. That is,
$$
F(t s) \geq t^{\theta_0}F(s), \ \ \forall \ t \geq 1.
$$
So, for $t \geq 1$, by \eqref{cresc. M} and the above inequality we have
$$
I(t w) \leq a_0t^2\dfrac{\left\|w\right\|^2}{2} + a_1t^4\dfrac{\left\|w\right\|^4}{4} - t^{\theta_0}\int_{\mathbb{R}^2} A(x)F(w)
$$
and the conclusion follows as before.
\end{remark}

Lemmas \ref{Lema 5} and \ref{Lema 6} show that the energy functional $I$ has the geometry of Mountain Pass Theorem. Thus, there exists a sequence $(u_n) \subset H$ such that 
$$I(u_n) \longrightarrow c^* := \inf_{\gamma\in\Gamma}\max_{t\in[0,1]} I(\gamma(t)) \ \ \textrm{and} \ \ I'(u_n) \longrightarrow 0$$
as $n \rightarrow \infty$, where
 $\Gamma := \{\gamma \in C([0,1],H) : \gamma(0)=0 \ \textrm{and} \ I(\gamma(1))<0\}$. It is worth noticing that, by the definition of $c^*$ and the proof of Lemma \ref{Lema 5}, we easily see that $c^* \geq \sigma > 0$.

%%%%%%%%%%%%%%%%%%%%%%%%%%%%%%%%%%%%%%%%%%%%%%%%%%%%%%%%%%%%%%%%%%%%%%%%%%%%%%%%%%%%%%%%%%%% Seção 4
\section{Minimax estimates}\label{estim. minimax}

In the first part of this section we will obtain an estimate for $c^*$ in terms of the parameters $\zeta$ and $\alpha_0$, given in the inequality \eqref{imersao H em D^(1,2)} and the hypothesis $(f_1)$, respectively. 

We first consider the case $\zeta<1$ and observe that $S_p$ defined in \eqref{def_sp} is the best constant of the compact embedding $H \hookrightarrow L^p(\mathbb{R}^2)$. Hence, there exists $v_p \in H$ such that $\|v_p\|_p=1$ and $S_p = \left\|v_p\right\| > 0$. Without loss of generality, we may assume that $v_p \geq 0$ a.e. in $\mathbb{R}^2$.

%%%%%%%%%%%%%%%%%%%%%%%%%%%%%%%%%%%%% Proposição estimativa para o nível PM caso zeta<1
\begin{proposition}\label{estim. nivel PM caso zeta<1}
Suppose that $(m_3^*)$, $(f_1)$, $(f_3)$, $(f_4^*)$ and $(f_5)$ hold. If $\zeta < 1$ then $$c^* < \dfrac{1}{2}M\left(\dfrac{4\pi\zeta^2}{\alpha_0}\right).$$
\end{proposition}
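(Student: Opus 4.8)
The plan is to bound the mountain-pass level $c^*$ from above by the maximal value of $I$ along the ray $t\mapsto tv_{p_0}$ generated by the extremal function $v_{p_0}\in H$ recalled just before the statement (so $\|v_{p_0}\|_{p_0}=1$, $\|v_{p_0}\|=S_{p_0}$ and $v_{p_0}\ge 0$ a.e. in $\mathbb{R}^2$), and then to turn a segment of this ray into an admissible path in $\Gamma$. Since $\|tv_{p_0}\|^2=t^2S_{p_0}^2$, one has $I(tv_{p_0})=\frac12 M(t^2S_{p_0}^2)-\int_{\mathbb{R}^2}A(x)F(tv_{p_0})$. First I would use $(f_5)$ to bound the nonlinear part from below: integrating $f(s)>C_{p_0}s^{p_0-1}$ over $(0,s)$ gives $F(s)>\frac{C_{p_0}}{p_0}s^{p_0}$ for every $s>0$, so, using $A\ge 1$ from $(A_1)$, $F\ge 0$, $v_{p_0}\ge 0$ and $\|v_{p_0}\|_{p_0}=1$,
$$\int_{\mathbb{R}^2}A(x)F(tv_{p_0})\ \ge\ \int_{\{v_{p_0}>0\}}F(tv_{p_0})\ >\ \frac{C_{p_0}}{p_0}\,t^{p_0}\int_{\mathbb{R}^2}v_{p_0}^{p_0}\ =\ \frac{C_{p_0}}{p_0}\,t^{p_0},$$
whence $I(tv_{p_0})<\frac12 M(t^2S_{p_0}^2)-\frac{C_{p_0}}{p_0}t^{p_0}$ for all $t>0$.

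Next I would feed this into the definition of $C_{p_0}$. For each fixed $t>0$ the map $C\mapsto p_0M(t^2S_{p_0}^2)-2Ct^{p_0}-p_0M(4\pi\zeta^2/\alpha_0)$ is affine in $C$ and nonpositive for every $C>C_{p_0}$, so letting $C\downarrow C_{p_0}$ it is nonpositive also for $C=C_{p_0}$; that is, $p_0M(t^2S_{p_0}^2)-2C_{p_0}t^{p_0}\le p_0M(4\pi\zeta^2/\alpha_0)$ for all $t>0$. Dividing by $2p_0$ and combining with the previous estimate yields $I(tv_{p_0})<\frac12 M(4\pi\zeta^2/\alpha_0)$ for every $t>0$; for $t=0$ this holds as well, since $I(0)=\frac12 M(0)=0$ and $M$ is increasing. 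I would also record that $C_{p_0}>0$: since $M$ is strictly increasing, choosing $t_1$ with $t_1^2S_{p_0}^2=8\pi\zeta^2/\alpha_0$ gives $M(t_1^2S_{p_0}^2)>M(4\pi\zeta^2/\alpha_0)$, so the inequality defining $C_{p_0}$ (evaluated at $t_1$) fails for all sufficiently small $C>0$.

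Finally I would build the path. By $(m_3^*)$ and the continuity of $m$ there are $a_0,a_1>0$ with $M(t)\le a_0t+\frac{a_1}{2}t^2$ for all $t\ge 0$ (as in \eqref{cresc. M}); hence $I(tv_{p_0})<\frac12\bigl(a_0t^2S_{p_0}^2+\frac{a_1}{2}t^4S_{p_0}^4\bigr)-\frac{C_{p_0}}{p_0}t^{p_0}\to-\infty$ as $t\to\infty$, since $p_0>4$ and $C_{p_0}>0$ (the same coercivity also follows from $(f_3)$ via Lemma \ref{Lema 6} and Remark \ref{prova alternativa lema 6}). Pick $T>0$ with $I(Tv_{p_0})<0$ and set $\gamma_0(t):=tTv_{p_0}$, $t\in[0,1]$; then $\gamma_0\in C([0,1],H)$, $\gamma_0(0)=0$ and $I(\gamma_0(1))<0$, so $\gamma_0\in\Gamma$. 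As $s\mapsto I(sv_{p_0})$ is continuous on the compact set $[0,T]$, its maximum there is attained at some $s^*$, and by the second paragraph $I(s^*v_{p_0})<\frac12 M(4\pi\zeta^2/\alpha_0)$. Therefore
$$c^*\ \le\ \max_{t\in[0,1]}I(\gamma_0(t))\ =\ \max_{s\in[0,T]}I(sv_{p_0})\ =\ I(s^*v_{p_0})\ <\ \frac12 M\!\left(\frac{4\pi\zeta^2}{\alpha_0}\right).$$

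The point requiring care — rather than a genuine obstacle — is the interplay of strict and non-strict inequalities: $(f_5)$ furnishes a \emph{strict} lower bound for the nonlinear term, whereas the definition of $C_{p_0}$ provides only a \emph{non-strict} upper bound for $\frac12 M(t^2S_{p_0}^2)-\frac{C_{p_0}}{p_0}t^{p_0}$, so one has to verify that $C_{p_0}$ itself lies in its defining set and is strictly positive in order for the two to combine into the desired strict estimate for $c^*$.
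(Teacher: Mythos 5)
Your proof is correct and follows essentially the same route as the paper: bounding $c^*$ by $\max_{t} I(tv_{p_0})$ along the ray through the $L^{p_0}$-extremal function, using $(f_5)$ to get $I(tv_{p_0})<\frac12 M(t^2S_{p_0}^2)-\frac{C_{p_0}}{p_0}t^{p_0}$, and then invoking the definition of $C_{p_0}$. The extra care you take — verifying that $C_{p_0}$ itself satisfies its defining inequality, that $C_{p_0}>0$, and that the maximum is attained on a compact segment so the strict inequality survives — fills in details the paper leaves implicit.
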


\begin{proof}
 Let $p_0>4$ be given in hypothesis $(f_5)$ and $v_{p_0} \in H $ be such that $\|v_{p_0}\|=S_{p_0}$ and $\|v_{p_0}\|_{p_0}=1$. Recalling that $(f_5)$ implies that $f(s)>0$ for any $s>0$, by Remark \ref{prova alternativa lema 6} we have that $I(tv_{p_0}) \rightarrow -\infty$ as $t \rightarrow \infty$. Thus, from definition of $c^*$ it follows that
$$c^* \leq \max_{t>0} I(tv_{p_0}).$$
By $(A_1)$ and $(f_5)$, 
$$
I(tv_{p_0}) < \dfrac{1}{2}M(t^2\left\|v_{p_0}\right\|^2) - t^{p_0}\dfrac{C_{p_0}}{p_0}\int_{\mathbb{R}^2} |v_{p_0}|^{p_0}, \ \ \forall \ t>0.
$$
Hence, from the definition of $C_{p_0}$ we obtain
$$
\max_{t>0} I(tv_{p_0}) < \max_{t>0}\left\{\dfrac{1}{2}M(t^2S_{p_0}^2) - t^{p_0}\dfrac{C_{p_0}}{p_0}\right\} \leq \dfrac{1}{2}M\left(\dfrac{4\pi\zeta^2}{\alpha_0}\right), 
$$
which concludes the proof. 
\end{proof}

In order to deal with the case $\zeta=1$ we define, for $n\geq 2$ and $R>0$, the following sequence of scaled and truncated Green's functions (see Moser \cite{Mos}):
\begin{equation}\label{funcoes de Green}
\widetilde{G}_n(x) = \dfrac{1}{\sqrt{2\pi}} \left\{\begin{array}{ll} (\log n)^{1/2} ,& \textrm{if} \ |x| \leq R/n, \vspace{0.2cm} \\  \dfrac{\log(R/|x|)}{(\log n)^{1/2}} ,& \textrm{if} \ R/n \leq |x| \leq R, \vspace{0.2cm} \\ 0 ,& \textrm{if} \ |x| \geq R. \end{array} \right. 
\end{equation}
Notice that $\widetilde{G}_n \in W^{1,2}(\mathbb{R}^2)$ and $\textrm{supp}(\widetilde{G}_n) = \overline{B_R(0)}$. Consequently, $\widetilde{G}_n \in H$. Furthermore, 
$$
\int_{\mathbb{R}^2} |\nabla \widetilde{G}_n|^2 = \dfrac{1}{2\pi \log n}\int_{\left\{R/n<|x|<R\right\}} |x|^{-2} = \dfrac{1}{\log n}\int_{R/n}^R s^{-1} \textrm{d}s = 1 
$$
and, recalling the notation $M_R = \left\|b\right\|_{L^{\infty}(B_R(0))}$,
\begin{eqnarray*}
\int_{\mathbb{R}^2} b(x)|\widetilde{G}_n|^2 
&=& \dfrac{\log n}{2\pi} \int_{B_{R/n}(0)} b(x) + \dfrac{1}{2\pi \log n}\int_{\left\{R/n \leq |x| \leq R\right\}} b(x)\log^2\left(\dfrac{R}{|x|}\right) \\
&\leq& \dfrac{R^2M_R \log n}{2n^2} + \dfrac{M_R}{\log n}\int_{R/n}^R s\log^2\left(\dfrac{R}{s}\right) \textrm{d}s \\
&=& \dfrac{R^2M_R \log n}{2n^2} + \dfrac{R^2M_R}{\log n}\left(\dfrac{n^2-1}{4n^2} - \dfrac{\log^2(n) + \log n}{2n^2}\right) \\
&\leq& \dfrac{R^2M_R}{4\log n}. 
\end{eqnarray*}
Then, by denoting $\xi_n := \|\widetilde{G}_n\|$, we have $\xi_n^2 \leq 1 + R^2M_R/(4\log n)$ and $\xi_n \rightarrow 1$ as $n \rightarrow \infty$. 

We now  consider the sequence of functions 
$$G_n := \dfrac{\widetilde{G}_n}{\xi_n}$$  and prove the following technical result:

%%%%%%%%%%%%%%%%%%%%%%%%%%%%%%%%%%%%% Lema 7
\begin{lemma}\label{Lema 7}
We have that $$\liminf_{n\rightarrow\infty} \int_{B_R(0)} e^{4\pi G_n^2} \geq \pi R^2 e^{-R^2M_R/2} + \pi R^2.$$
\end{lemma}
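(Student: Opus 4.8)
The plan is to split the integral $\int_{B_R(0)} e^{4\pi G_n^2}$ into the contribution of the inner disk $B_{R/n}(0)$, on which $\widetilde G_n$ (and hence $G_n$) is constant, and the contribution of the annulus $\{R/n\le|x|\le R\}$, and to estimate each piece separately.

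On $B_{R/n}(0)$ we have $\widetilde G_n\equiv(2\pi)^{-1/2}(\log n)^{1/2}$, so $G_n=\widetilde G_n/\xi_n$ satisfies $4\pi G_n^2\equiv 2\xi_n^{-2}\log n$ there, and therefore
$$
\int_{B_{R/n}(0)} e^{4\pi G_n^2}=\pi\frac{R^2}{n^2}\,n^{2/\xi_n^2}=\pi R^2\exp\!\Big(\frac{2(1-\xi_n^2)}{\xi_n^2}\,\log n\Big).
$$
Here I would use the two properties of $\xi_n$ recorded just before the lemma: the estimate $\xi_n^2\le 1+R^2M_R/(4\log n)$, which gives $(1-\xi_n^2)\log n\ge -R^2M_R/4$ for every $n$, and the convergence $\xi_n\to1$, which keeps $\xi_n^2$ bounded above and away from $0$. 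Together these yield $\liminf_n 2\xi_n^{-2}(1-\xi_n^2)\log n\ge -R^2M_R/2$, and since $t\mapsto e^t$ is continuous and increasing,
$$
\liminf_{n\to\infty}\int_{B_{R/n}(0)} e^{4\pi G_n^2}\ge \pi R^2 e^{-R^2M_R/2}.
$$

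For the annulus I would simply use $e^{4\pi G_n^2}\ge 1$, so that $\int_{\{R/n\le|x|\le R\}}e^{4\pi G_n^2}\ge|\{R/n\le|x|\le R\}|=\pi R^2(1-n^{-2})$, which tends to $\pi R^2$. Adding the two lower bounds and using $\liminf(a_n+b_n)\ge\liminf a_n+\liminf b_n$ gives the claimed inequality. The one delicate point is the control of the exponent $2\xi_n^{-2}(1-\xi_n^2)\log n$ in the inner-disk term: for $\widetilde G_n$ itself the integral over $B_{R/n}(0)$ is exactly the constant $\pi R^2$, but rescaling by $\xi_n$ perturbs the exponent by an amount of order $(1-\xi_n^2)\log n$ which need not vanish, and it is precisely the quantitative bound $\xi_n^2\le 1+R^2M_R/(4\log n)$ (together with $\xi_n\to1$) that keeps this perturbation $\ge -R^2M_R/4$ and hence produces the factor $e^{-R^2M_R/2}$; everything else is routine.
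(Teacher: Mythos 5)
Your proof is correct. The decomposition into $B_{R/n}(0)$ and the annulus $\{R/n\le|x|\le R\}$ is exactly the paper's, and your treatment of the inner disk — writing the integral as $\pi R^2 e^{2(\xi_n^{-2}-1)\log n}$ and using $\xi_n^2\le 1+R^2M_R/(4\log n)$ together with $\xi_n\to 1$ to force the exponent's liminf to be $\ge -R^2M_R/2$ — is precisely the argument in the text, and it is indeed the only delicate point. The one place you diverge is the annulus: the paper changes variables to $t=\xi_n^{-1}\log(R/s)/\log n$, drops the $t^2$ term in the exponent, and integrates explicitly to get $\pi R^2(1-e^{-2\log n})$, whereas you simply use $e^{4\pi G_n^2}\ge 1$ and the area of the annulus. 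Both yield exactly $\pi R^2(1-n^{-2})$, so your shortcut loses nothing for this lemma; the paper's finer computation would only matter if one needed a strictly better constant than $\pi R^2$ from the annular region, which is not required here.
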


\begin{proof}
Since $\xi_n^2 \leq 1 + R^2M_R/(4\log n)$, then
$$
2(\xi_n^{-2} - 1) \log n = 2\xi_n^{-2}(1 - \xi_n^2) \log n \geq -\xi_n^{-2}\dfrac{R^2M_R}{2}
$$
and therefore
\begin{equation}\label{des. bola raio R/n}
\begin{array}{lcl}
\displaystyle\int_{B_{R/n}(0)} e^{4\pi G_n^2} &=& \displaystyle\int_{B_{R/n}(0)} e^{2\xi_n^{-2}\log n} \vspace{0.2cm}\\
&=& \pi R^2e^{2(\xi_n^{-2} - 1)\log n} \geq \pi R^2e^{-\xi_n^{-2}R^2M_R/2}.
\end{array}
\end{equation}

On the other hand, by using the change of variable $t = \xi_n^{-1}\log(R/s)/\log n$, we get
\begin{eqnarray*}
\int_{\left\{R/n \leq |x| \leq R\right\}} e^{4\pi G_n^2} 
&=& \int_{\left\{R/n \leq |x| \leq R\right\}} e^{2\xi_n^{-2}\log^2(R/|x|)/\log n} \\
&=& 2\pi\int_{R/n}^R se^{2(\xi_n^{-1}\log(R/s)/\log n)^2 \log n} \textrm{d}s \\
&=& 2\pi R^2 \xi_n \log n \int_0^{\xi_n^{-1}} e^{2(t^2 - \xi_n t)\log n} \textrm{d}t \\
&\geq& 2\pi R^2 \xi_n \log n \int_0^{\xi_n^{-1}} e^{-2\xi_n t\log n} \textrm{d}t \\
&=& -\pi R^2 e^{-2\log n} + \pi R^2.
\end{eqnarray*}
Therefore, since $\displaystyle \lim_{n \rightarrow \infty}\xi_n = 1$, it follows from \eqref{des. bola raio R/n} and the above inequality  that
$$
\liminf_{n\rightarrow\infty} \int_{B_R(0)} e^{4\pi G_n^2} \geq \pi R^2 e^{-R^2M_R/2} + \pi R^2,
$$
as stated. 
\end{proof}

Now, for $\zeta=1$, we can use the previous lemma to obtain the same estimate of Proposition \ref{estim. nivel PM caso zeta<1} with condition $(f_6)$ instead of $(f_5)$:

%%%%%%%%%%%%%%%%%%%%%%%%%%%%%%%%%%%%% Proposição estimativa para o nível PM
\begin{proposition}\label{estim. nivel PM}
Suppose that $(m_3^*)$, $(f_1)$, $(f_3)$, $(f_4^*)$ and $(f_6)$ hold. Then $$c^* < \dfrac{1}{2}M\left(\dfrac{4\pi}{\alpha_0}\right).$$
\end{proposition}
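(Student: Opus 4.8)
The plan is to test the Mountain Pass level against the family $t \mapsto I(tG_n)$ for a fixed (but later to be optimized) radius $R>0$ and $n$ large, exactly as in Proposition \ref{estim. nivel PM caso zeta<1}, but now using the concentrating Moser functions $G_n$ in place of the extremal $v_{p_0}$. Since $(f_6)$ implies $f(s)>0$ for all $s>0$ (indeed $\liminf_{s\to\infty} sf(s)e^{-\alpha_0 s^2}>0$ forces $f$ to be eventually positive, and combined with $(f_1)$ and the monotonicity hidden in the hypotheses one gets positivity on all of $(0,\infty)$; in any case $G_n^+\not\equiv 0$), Remark \ref{prova alternativa lema 6} applies and gives $I(tG_n)\to-\infty$ as $t\to\infty$, hence $c^*\le \max_{t>0}I(tG_n)$ for every $n$. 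So it suffices to show that for some $R>0$ and some $n$,
$$
\max_{t>0} I(tG_n) < \tfrac{1}{2}M\!\left(\tfrac{4\pi}{\alpha_0}\right).
$$
Arguing by contradiction, suppose instead that $\max_{t>0}I(tG_n)\ge \tfrac12 M(4\pi/\alpha_0)$ for all large $n$. Let $t_n>0$ be a point where the maximum is attained; then $\tfrac{d}{dt}I(tG_n)|_{t=t_n}=0$, i.e.
$$
m(t_n^2\|G_n\|^2)\,t_n^2\|G_n\|^2 = \int_{\mathbb{R}^2} A(x)f(t_nG_n)\,t_nG_n.
$$
Using $\|G_n\|=1$, $(m_1)$, and the contradiction hypothesis together with $M(\tau)\le \tfrac12 m(0)\cdots$— more precisely, using $(m_2)$-type superadditivity and $(m_3^*)$ to control $M$ from above and $m$ from below — one deduces first that $t_n^2$ is bounded below, and more importantly that $t_n^2 m(t_n^2)\ge 4\pi/\alpha_0\cdot m(4\pi/\alpha_0)$ in the limit, which since $m(t)/t$ is essentially monotone forces $\liminf t_n^2\ge 4\pi/\alpha_0$.

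The heart of the argument is then the reverse estimate for $t_n$. Restricting the integral $\int A f(t_nG_n)t_nG_n$ to the inner ball $B_{R/n}(0)$, where $A\ge 1$ and $G_n\equiv \xi_n^{-1}(\log n)^{1/2}/\sqrt{2\pi}$ is constant and $\to\infty$, and invoking $(f_6)$ in the form $sf(s)\ge (\gamma_0-\ep)e^{\alpha_0 s^2}$ for $s$ large, one gets
$$
m(t_n^2)t_n^2 \ge (\gamma_0-\ep)\int_{B_{R/n}(0)} e^{\alpha_0 t_n^2 G_n^2} = (\gamma_0-\ep)\,\pi (R/n)^2 e^{\alpha_0 t_n^2 \xi_n^{-2}(\log n)/(2\pi)}.
$$
If $\liminf t_n^2 > 4\pi/\alpha_0$, the exponent grows faster than $n^2$ (recall $\xi_n\to1$), so the right side blows up while the left side is bounded — contradiction. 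Hence $t_n^2\to 4\pi/\alpha_0$. Feeding this back with the sharp constant $\xi_n^2\le 1+R^2M_R/(4\log n)$ refines the exponent: $\alpha_0 t_n^2\xi_n^{-2}(\log n)/(2\pi) = 2\log n + o(1)\cdot\log n - R^2M_R/2 + o(1)$, so the inner-ball contribution tends (along a subsequence) to at least $(\gamma_0-\ep)\pi R^2 e^{-R^2M_R/2}$. Combining with the lower bound $e^{\alpha_0 t_n^2 G_n^2}\ge 1 + \alpha_0 t_n^2 G_n^2 + \cdots$ on the annulus $\{R/n\le|x|\le R\}$ handled exactly as in Lemma \ref{Lema 7}, and using $(f_3)$ (so that $F$ controls $\,sf(s)$ from below up to the $\theta_0$-superlinearity, giving $\int A F(t_nG_n)$ a matching lower bound), one obtains
$$
\tfrac12 M\!\left(\tfrac{4\pi}{\alpha_0}\right)\le I(t_nG_n) = \tfrac12 M(t_n^2) - \int_{\mathbb{R}^2} A(x)F(t_nG_n) \le \tfrac12 M\!\left(\tfrac{4\pi}{\alpha_0}\right) + o(1) - \tfrac{1}{4\alpha_0}m\!\left(\tfrac{4\pi}{\alpha_0}\right)\!\left(\gamma_0\,\pi R^2 e^{-R^2M_R/2} - (\text{lower order})\right),
$$
using $(f_3)$ with $\theta_0>4$ to absorb the annulus term and the elementary inequality from Lemma \ref{Lema 9}(i) to pass between $\frac12 M(t_n^2)$ and $\frac14 m(t_n^2)t_n^2$. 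The factor multiplying the Moser mass is, after inserting $t_n^2\to 4\pi/\alpha_0$, precisely of the form that $(f_6)$ was designed to beat: the contradiction appears provided
$$
\gamma_0 > 4\alpha_0^{-1}m\!\left(\tfrac{4\pi}{\alpha_0}\right) R^{-2} e^{R^2 M_R/2}.
$$
Since $R>0$ was arbitrary, taking the infimum over $R$ gives exactly the threshold in $(f_6)$, and we may fix $R$ realizing (approximately) that infimum; this yields the contradiction and hence $c^* < \tfrac12 M(4\pi/\alpha_0)$.

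I expect the main obstacle to be the bookkeeping in the two-sided control of $t_n$: one must simultaneously (a) rule out $t_n\to\infty$ (via the inner-ball blow-up above), (b) rule out $t_n$ staying below $\sqrt{4\pi/\alpha_0}$ (via the contradiction hypothesis $I(t_nG_n)\ge\frac12 M(4\pi/\alpha_0)$ together with $\int AF\ge 0$ and superadditivity of $M$), and (c) track the $o(1)$ terms coming from $\xi_n\to1$ carefully enough that the surviving constant is exactly $4\alpha_0^{-1}m(4\pi/\alpha_0)R^{-2}e^{R^2M_R/2}$ rather than something larger. The Kirchhoff function $m$ complicates each of these relative to the local case, and the correct tool for converting between $M(t_n^2)$, $m(t_n^2)t_n^2$ and the threshold value at $4\pi/\alpha_0$ is the monotonicity of $L(t)=\frac12M(t)-\frac14 m(t)t$ from Lemma \ref{Lema 9}(i) (valid under $(m_3)$; under the weaker $(m_3^*)$ one argues directly with \eqref{cresc. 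M}). Everything else — the Taylor expansion of $e^{\alpha_0 t_n^2 G_n^2}$, the annulus integral, the role of $(f_3)$ — is routine and parallels Lemma \ref{Lema 7} and Proposition \ref{estim. nivel PM caso zeta<1}.
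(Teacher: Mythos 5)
Your strategy is the paper's: test $c^*$ against $t\mapsto I(tG_n)$, take the maximizer $t_n$, use the contradiction hypothesis plus $\int A F\ge 0$ and the monotonicity of $M$ to get $t_n^2\ge 4\pi/\alpha_0$, then use the stationarity identity $m(t_n^2)t_n^2=\int A f(t_nG_n)t_nG_n$ together with $(f_6)$ on the inner ball $B_{R/n}(0)$ to force $t_n^2\to 4\pi/\alpha_0$ and to extract the Moser mass $\pi R^2 e^{-R^2M_R/2}$. Up to and including the line where you conclude that the inner-ball contribution tends to at least $(\gamma_0-\ep)\pi R^2 e^{-R^2M_R/2}$, this is exactly the paper's argument (the paper packages the exponent bookkeeping as Lemma \ref{Lema 7} and adds a decomposition $B_R(0)=D_{n,\delta}\cup E_{n,\delta}$, but restricting to the inner ball alone yields the same constant). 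At that point you are done: combining $m(t_n^2)t_n^2\ge(\gamma_0-\ep)\pi R^2e^{-R^2M_R/2}+o(1)$ with $t_n^2\to 4\pi/\alpha_0$ gives $\gamma_0\le 4\alpha_0^{-1}m(4\pi/\alpha_0)R^{-2}e^{R^2M_R/2}$, and taking the infimum over $R$ contradicts $(f_6)$.

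The final paragraph of your write-up, however, is wrong as stated and should be deleted rather than repaired. You try to re-derive the contradiction from the energy inequality $\tfrac12 M(4\pi/\alpha_0)\le I(t_nG_n)=\tfrac12 M(t_n^2)-\int A F(t_nG_n)$ by producing a lower bound on $\int A F(t_nG_n)$ ``using $(f_3)$ so that $F$ controls $sf(s)$ from below.'' But $(f_3)$ reads $\theta_0F(s)\le sf(s)$, i.e.\ $F(s)\le sf(s)/\theta_0$: it bounds $F$ \emph{above} by $sf(s)$, not below, so it cannot convert the $(f_6)$ lower bound on $sf(s)$ into a lower bound on $\int AF$; the coefficient $\tfrac{1}{4\alpha_0}m(4\pi/\alpha_0)$ in your display also does not match the threshold you then state. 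The derivative identity, not the energy, is the correct vehicle — which is precisely why the paper never returns to $I(t_nG_n)$ after establishing \eqref{cota inferior tn}. Two smaller slips: $(f_6)$ does not imply $f(s)>0$ for all $s>0$ (only for large $s$), so to get $I(tG_n)\to-\infty$ you should invoke Lemma \ref{Lema 6} directly (its proof applies verbatim to $G_n$) rather than Remark \ref{prova alternativa lema 6}; and the lower bound $t_n^2\ge 4\pi/\alpha_0$ needs nothing beyond $M$ being increasing (since $m>0$) — no superadditivity or monotonicity of $m(t)/t$ is required or available under $(m_3^*)$.
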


\begin{proof}
As in the proof of Lemma \ref{Lema 6}, we have that $I(tG_n) \rightarrow -\infty$ as $t \rightarrow \infty$. By definition of $c^*$, it follows that
$$c^* \leq \max_{t>0} I(tG_n),\quad \forall\, n \geq 2.$$
Since the functional $I$ has the Mountain Pass geometry, for each $n$ there exists $t_n>0$ such that
$$I(t_nG_n) = \max_{t>0}I(tG_n).$$
Thus, it is enough to prove that, for some $n \in \mathbb{N}$, we have
$$I(t_nG_n) < \dfrac{1}{2}M\left(\dfrac{4\pi}{\alpha_0}\right).$$

Suppose, by contradiction, that the above inequality is false. Since $\left\|G_n\right\|=1$, we have that 
$$I(t_nG_n) = \dfrac{1}{2}M(t_n^2) - \int_{\mathbb{R}^2} A(x)F(t_nG_n) \geq \dfrac{1}{2}M\left(\dfrac{4\pi}{\alpha_0}\right),\quad \forall\, n \geq 2.$$
Since $A$ and $F$ are nonnegative, this implies that $M(t_n^2) \geq M(4\pi/\alpha_0)$. But $M$ is a increasing function, because its derivative $m$ is positive. We conclude that
\begin{equation}\label{cota inferior tn}
t_n^2 \geq \dfrac{4\pi}{\alpha_0}.
\end{equation}

On the other hand, since $I'(t_nG_n)t_nG_n = 0$, we can use $(A_1)$, $f \geq 0$ and $\textrm{supp}(G_n) = \overline{B_R(0)}$ to obtain
\begin{equation} \label{ident. deriv. em tn}
\begin{array}{lcl}
m(t_n^2)t_n^2 &=& \displaystyle\int_{B_R(0)} A(x)f(t_nG_n)t_nG_n  \vspace{0.2cm}\\
              &\geq& \displaystyle\int_{B_{R/n}(0)} f(t_nG_n)t_nG_n  \vspace{0.2cm}\\
					    &=& \displaystyle\int_{B_{R/n}(0)} f\left(\dfrac{t_n\xi_n^{-1}}{\sqrt{2\pi}}(\log n)^{1/2}\right)\dfrac{t_n\xi_n^{-1}}{\sqrt{2\pi}}(\log n)^{1/2}. 
\end{array}
\end{equation}
But notice that, given $0<\delta<\gamma_0$, by $(f_6)$ there exists $s_{\delta}>0$ such that 
\begin{equation}\label{conseq. (f7)}
f(s)s \geq (\gamma_0 - \delta)e^{\alpha_0 s^2}, \ \ \forall \ s \geq s_{\delta}.
\end{equation}
Since $t_n\xi_n^{-1}(\log n)^{1/2} \rightarrow \infty$ as $n \rightarrow \infty$, because $\xi_n \rightarrow 1$ and $t_n \not\rightarrow 0$, it follows that, for $n$ large,
\begin{eqnarray*}
m(t_n^2)t_n^2 &\geq& \int_{B_{R/n}(0)} (\gamma_0 - \delta)e^{\alpha_0 t_n^2(\xi_n\sqrt{2\pi})^{-2}\log n} \\
			  &=& \pi R^2(\gamma_0 - \delta)e^{\left(\alpha_0 t_n^2(\xi_n\sqrt{2\pi})^{-2} - 2\right)\log n}.
\end{eqnarray*}
This inequality and $(m_3^*)$ imply that the sequence $(t_n) \subset (0,\infty)$ is bounded and, consequently, there exists $t_0 > 0$ such that, up to a subsequence, $t_n \rightarrow t_0$ as $n \rightarrow \infty$. In this case, the above inequality also implies that 
$$
\lim_{n \rightarrow \infty} \left(\alpha_0 t_n^2(\xi_n\sqrt{2\pi})^{-2} - 2\right) = 2\left(\dfrac{\alpha_0}{4\pi}t_0^2 - 1\right) \leq 0.
$$
From this and \eqref{cota inferior tn}, we infer  that
\begin{equation}\label{converg. tn}
\lim_{n\to \infty} t_n^2 = \dfrac{4\pi}{\alpha_0}.
\end{equation} 

Now, for each $n \geq 2$, define the sets
$$D_{n,\delta} := \left\{x \in B_R(0) : t_nG_n(x) \geq s_{\delta}\right\}, \ \ E_{n,\delta} := B_R(0) \backslash D_{n,\delta}.$$
By hypothesis $(A_1)$, \eqref{ident. deriv. em tn} and \eqref{conseq. (f7)}, we have that 
\begin{eqnarray}
m(t_n^2)t_n^2 &\geq& \int_{D_{n,\delta}} f(t_nG_n)t_nG_n + \int_{E_{n,\delta}} f(t_nG_n)t_nG_n \nonumber \\
              &\geq& (\gamma_0 - \delta)\left(\int_{B_R(0)} e^{\alpha_0 t_n^2G_n^2} - \int_{E_{n,\delta}} e^{\alpha_0 t_n^2G_n^2}\right)  \label{estim. tn integral truncada} \\ 
							&& + \int_{E_{n,\delta}} f(t_nG_n)t_nG_n. \nonumber
\end{eqnarray}
But $G_n(x) \rightarrow 0$ for a.e. $x \in B_R(0)$ and, therefore, $\chi_{E_{n,\delta}}(x) \rightarrow 1$ for a.e. $x \in B_R(0)$, as $n \rightarrow \infty$, where $\chi_{E_{n,\delta}}$ is the characteristic function of $E_{n,\delta}$. Moreover, $t_nG_n < s_{\delta}$ in $E_{n,\delta}$. Then, it follows from the Lebesgue's Theorem that
$$\int_{E_{n,\delta}} e^{\alpha_0 t_n^2G_n^2} \longrightarrow \pi R^2, \ \ \int_{E_{n,\delta}} f(t_nG_n)t_nG_n \longrightarrow 0.$$
Hence, by \eqref{cota inferior tn}, \eqref{converg. tn}, \eqref{estim. tn integral truncada} and Lemma \ref{Lema 7}, we get
\begin{eqnarray*}
m\left(\dfrac{4\pi}{\alpha_0}\right)\dfrac{4\pi}{\alpha_0} 
&\geq& (\gamma_0 - \delta)\liminf_{n \rightarrow \infty}\left(\int_{B_R(0)} e^{\alpha_0 t_n^2G_n^2}\right) - (\gamma_0 - \delta)\pi R^2 \\
&\geq& (\gamma_0 - \delta)\liminf_{n \rightarrow \infty}\left(\int_{B_R(0)} e^{4\pi G_n^2}\right) - (\gamma_0 - \delta)\pi R^2 \\ 
&\geq& (\gamma_0 - \delta)\pi R^2e^{-R^2M_R/2}.
\end{eqnarray*}
Since $0<\delta<\gamma_0$ is arbitrary, we can let $\delta \rightarrow 0^+$ in the above inequality to obtain 
$$\gamma_0 \leq \dfrac{4}{\alpha_0} m\left(\dfrac{4\pi}{\alpha_0}\right)R^{-2}e^{R^2M_R/2}.$$
Since $R>0$ is also arbitrary, we can take the infimum for $R>0$ in this inequality and obtain a contradiction with $(f_6)$. This concludes the proof. \end{proof}

Let $\mathcal{N}$ be the Nehari manifold associated to the functional $I$, namely $$\mathcal{N} := \{u \in H \setminus \{0\} : I'(u)u = 0\}$$ 
and define
$$
d^* := \inf_{u \in \mathcal{N}} I(u).
$$
The next result shows that obtaining a ground state solution is equivalent to show that there exists a critical point $u_0$ such that $I(u_0) = c^*$. 

%%%%%%%%%%%%%%%%%%%%%%%%%%%%%%%%%%%%% Lema 8
\begin{lemma}\label{Lema 8}
Suppose that $(m_3)$, $(f_1)$, $(f_3)$ and $(f_4)$ hold. Then $c^* \leq d^*.$
\end{lemma}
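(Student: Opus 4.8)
The plan is to establish the classical comparison between the Nehari level and the mountain-pass level: I will show that for every $u \in \mathcal{N}$ one has $I(u) = \max_{t \geq 0} I(tu)$, produce an admissible path in $\Gamma$ running along the ray $\mathbb{R}^+ u$, and conclude $c^* \leq I(u)$; taking the infimum over $\mathcal{N}$ then gives $c^* \leq d^*$.

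Fix $u \in \mathcal{N}$ and set $\varphi(t) := I(tu)$ for $t \geq 0$. Using \eqref{deriv. func. energia} and $f(s)=0$ for $s \leq 0$, I would rewrite, for $t>0$,
\[
\frac{\varphi'(t)}{t^3} = \|u\|^4\,\frac{m(t^2\|u\|^2)}{t^2\|u\|^2} - \int_{\{u > 0\}} A(x)\,\frac{f(tu)}{(tu)^3}\,u^4 .
\]
By $(m_3)$ the map $\tau \mapsto m(\tau)/\tau$ is decreasing, so the first term is strictly decreasing in $t$; by $(f_4)$ the map $\tau \mapsto f(\tau)/\tau^3$ is nondecreasing on $(0,\infty)$, so the integral is nondecreasing in $t$ and hence the second term is nonincreasing. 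Therefore $t \mapsto \varphi'(t)/t^3$ is strictly decreasing on $(0,\infty)$. Since $u \in \mathcal{N}$ gives $\varphi'(1) = I'(u)u = 0$, it follows that $\varphi'(t) > 0$ on $(0,1)$ and $\varphi'(t) < 0$ on $(1,\infty)$; in particular $\max_{t \geq 0}\varphi(t) = \varphi(1) = I(u)$ and $I(u) > \varphi(0) = 0$.

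It remains to build the path. Arguing exactly as in Remark \ref{prova alternativa lema 6}, $(f_3)$ yields $F(tu) \geq t^{\theta_0} F(u)$ for $t \geq 1$; moreover $u \in \mathcal{N}$ forces $\int_{\mathbb{R}^2} A(x)f(u)u = m(\|u\|^2)\|u\|^2 > 0$, so $f(u)>0$ on a set of positive measure, and then continuity of $f$ together with $(A_1)$ gives $\int_{\mathbb{R}^2} A(x)F(u) > 0$. On the other hand $(m_3)$ implies $m(t) < m(1)t$ for $t>1$, hence $M(t^2\|u\|^2) \leq C_1 + C_2 t^4$ for $t$ large. Since $\theta_0 > 4$, these two facts force $\varphi(t) \to -\infty$ as $t\to\infty$, so there is $T>1$ with $I(Tu)<0$. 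Then $\gamma(s):=sTu$, $s\in[0,1]$, belongs to $\Gamma$, and
\[
c^* \leq \max_{s \in [0,1]} I(\gamma(s)) = \max_{t \in [0,T]}\varphi(t) \leq \max_{t\geq 0}\varphi(t) = \varphi(1) = I(u).
\]
Taking the infimum over $u \in \mathcal{N}$ gives $c^* \leq d^*$.

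The only genuinely delicate point is the strict monotonicity of $t \mapsto \varphi'(t)/t^3$: this is precisely where the compatibility between the Kirchhoff condition $(m_3)$ and the cubic-type condition $(f_4)$ enters, and it is the reason those particular monotonicity hypotheses are imposed. The finiteness of the integral appearing in the displayed identity follows from \eqref{cresc. F} and Lemma \ref{Lema 3}, and the positivity of $\int A F(u)$ (when $f$ may vanish near $0$) is handled by the continuity argument above; everything else is routine bookkeeping.
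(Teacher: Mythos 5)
Your argument is correct and essentially identical to the paper's: you establish that $t\mapsto I'(tu)u/t^3$ is decreasing via $(m_3)$ and $(f_4)$ so that $I(u)=\max_{t\ge 0}I(tu)$ for $u\in\mathcal{N}$, and you build the admissible path along the ray exactly as in Remark \ref{prova alternativa lema 6}. The only cosmetic difference is that the paper writes $h'(t)=I'(tu)u-t^3I'(u)u$ and compares the bracketed differences directly, while you divide by $t^3$ and invoke monotonicity of the quotient; these are the same computation.
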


\begin{proof}
 Let $u \in \mathcal{N}$. Then, recalling that $f(s) = 0$ for $s \leq 0$, the fact that $u \neq 0$ and $I'(u)u = 0$ implies that $u^+ \not\equiv 0$. If $h(t) := I(tu)$, $t \geq 0$, we have \begin{eqnarray*}
h'(t) = I'(tu)u 
&=& I'(tu)u - t^3I'(u)u \\
&=& m(t^2\left\|u\right\|^2)t\left\|u\right\|^2 - \int_{\mathbb{R}^2} A(x)f(tu)u \\ 
&& - \ t^3 m(\left\|u\right\|^2)\left\|u\right\|^2 + t^3\int_{\mathbb{R}^2} A(x)f(u)u \\
&=& t^3\left\|u\right\|^4\left(\dfrac{m(t^2\left\|u\right\|^2)}{t^2\left\|u\right\|^2} - \dfrac{m(\left\|u\right\|^2)}{\left\|u\right\|^2}\right) \\
&& + \ t^3 \int_{\{u>0\}} A(x)u^4\left(\dfrac{f(u)}{u^3} - \dfrac{f(tu)}{(tu)^3}\right),
\end{eqnarray*}
for any $t>0$. Thus, by $(m_3)$ and $(f_4)$, we have that $h'(t) \geq 0$ for $0<t<1$ and $h'(t) \leq 0$ for $t>1$. Since $h'(1) = I'(u)u = 0$, then
$$I(u) = h(1) = \max_{t \geq 0} h(t) = \max_{t \geq 0} I(tu).$$
On the other hand, since $u^+ \not\equiv 0$ and $(f_4)$ implies that $f(s)>0$ for any $s>0$, by Remark \ref{prova alternativa lema 6} there exists $t_0 > 0$ such that $I(t_0u)<0$. Defining $\gamma : [0,1] \rightarrow H$ by $\gamma(t) := tt_0u$, from definition of $c^*$ it follows that
$$c^* \leq \max_{t \in [0,1]} I(\gamma(t)) \leq \max_{t \geq 0} I(tu) = I(u).$$
Since $u \in \mathcal{N}$ is arbitrary, we conclude that $c^* \leq d^*$.
\end{proof}

%%%%%%%%%%%%%%%%%%%%%%%%%%%%%%%%%%%%%%%%%%%%%%%%%%%%%%%%%%%%%%%%%%%%%%%%%%%%%%%%%%%%%%%%%%%% Seção 5
\section{Proof of the main theorems}

We present in this final section the proofs for our main theorems. We first prove that Palais-Smale sequences are bounded.
 
%%%%%%%%%%%%%%%%%%%%%%%%%%%%%%%%%%%%% Proposição seq. de Palais-Smale
\begin{proposition}\label{Proposicao seq. de Palais-Smale}
Suppose that $(m_1)$, $(m_3)$, $(f_1)-(f_3)$ and $(f_4^*)$ hold. Let $(u_n) \subset H$ be a Palais-Smale sequence for the functional $I$ in the level $c \in \mathbb{R}$, that is, 
$$I(u_n) \longrightarrow c \ \ \textrm{and} \ \ I'(u_n) \longrightarrow 0$$
as $n \rightarrow \infty$. Then $(u_n)$ is bounded in $H$. Moreover, up to a subsequence,
\begin{itemize}
	\item[\emph{$(i)$}] $\displaystyle \int_{\Omega} A(x)f(u_n) \longrightarrow \int_{\Omega} A(x)f(u)$, for any bounded domain $\Omega \subset \mathbb{R}^2$;
	\item[\emph{$(ii)$}] $\displaystyle \int_{\mathbb{R}^2} A(x)F(u_n) \longrightarrow \int_{\mathbb{R}^2} A(x)F(u).$
\end{itemize}
\end{proposition}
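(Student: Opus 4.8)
The plan is to prove boundedness first and then, working along a subsequence, to deduce $(i)$ and $(ii)$. \emph{Boundedness.} I would look at the combination $I(u_n)-\theta_0^{-1}I'(u_n)u_n$. Since $A\ge 1$ and $(f_3)$ holds, we have $\int_{\mathbb{R}^2}A(x)\big(\theta_0^{-1}f(u_n)u_n-F(u_n)\big)\ge 0$, so this combination is at least $\tfrac12 M(\|u_n\|^2)-\theta_0^{-1}m(\|u_n\|^2)\|u_n\|^2$. By Lemma \ref{Lema 9}$(i)$ one has $\tfrac14 m(t)t\le \tfrac12 M(t)$; writing $\tfrac12 M(t)-\theta_0^{-1}m(t)t=(1-4\theta_0^{-1})\tfrac12 M(t)+4\theta_0^{-1}L(t)$ and using $(m_1)$ together with $L\ge 0$, this is bounded below by $(1-4\theta_0^{-1})\tfrac{m_0}{2}t$. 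Since the left-hand side equals $c+o(1)+o(\|u_n\|)$ as $n\to\infty$, and $\theta_0>4$, it follows that $(\|u_n\|)$ is bounded.

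\emph{Item $(i)$.} Along a subsequence, $u_n\rightharpoonup u$ in $H$ and, by the compactness of $H\hookrightarrow L_A^p(\mathbb{R}^2)\hookrightarrow L^p(\mathbb{R}^2)$ for every $p\ge 2$, $u_n\to u$ in each such space and a.e.\ in $\mathbb{R}^2$. From $I'(u_n)u_n=m(\|u_n\|^2)\|u_n\|^2-\int_{\mathbb{R}^2}A(x)f(u_n)u_n\to 0$, the boundedness of $\|u_n\|$ and the continuity of $m$, the quantity $\int_{\mathbb{R}^2}A(x)f(u_n)u_n$ is bounded by some constant $C$; as $A\ge 1$ and $f\ge 0$, this forces $\int_\Omega|f(u_n)u_n|\le C$ for every bounded domain $\Omega$. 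I would then apply Lemma \ref{Lema 10} to the continuous function $(x,s)\mapsto f(s)$ — its remaining hypotheses, $u_n\to u$ in $L^1(\Omega)$ and $f(u_n),f(u)\in L^1(\Omega)$, holding by the above convergence and by \eqref{cresc. F} together with Lemma \ref{Lema 3} — to get $f(u_n)\to f(u)$ in $L^1(\Omega)$; multiplying by $A\in L^\infty(\Omega)$ gives $(i)$.

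\emph{Item $(ii)$.} I would split $\int_{\mathbb{R}^2}A(x)F(u_n)=\int_{B_R(0)}+\int_{\mathbb{R}^2\setminus B_R(0)}$. For fixed $R$, Lemma \ref{Lema 10} applied to $(x,s)\mapsto F(s)$ — with $\int_{B_R(0)}|F(u_n)u_n|\le C$ coming from $(f_2)$, $(f_4^*)$ and the bound on $\int A(x)f(u_n)u_n$ — gives $\int_{B_R(0)}A(x)F(u_n)\to\int_{B_R(0)}A(x)F(u)$. The key point is that the tail is small uniformly in $n$: fixing a level $T\ge s_0$, on $\{u_n\le T\}$ one has $F(u_n)\le C_T\,u_n^2$ with $C_T:=\sup_{0<s\le T}F(s)/s^2<\infty$ (finite by $(f_4^*)$), so this contribution is at most $C_T\int_{\mathbb{R}^2\setminus B_R(0)}A(x)u_n^2$, which tends to $0$ uniformly in $n$ as $R\to\infty$ because $u_n\to u$ in $L_A^2(\mathbb{R}^2)$; on $\{u_n>T\}$, hypothesis $(f_2)$ yields $F(u_n)\le K_0f(u_n)=K_0u_n^{-1}f(u_n)u_n\le (K_0/T)f(u_n)u_n$, whence this contribution is at most $(K_0/T)\int_{\mathbb{R}^2}A(x)f(u_n)u_n\le K_0C/T$, uniformly in $n$ and $R$. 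Since also $\int_{\mathbb{R}^2\setminus B_R(0)}A(x)F(u)\to 0$ as $R\to\infty$ (as $AF(u)\in L^1(\mathbb{R}^2)$ by \eqref{cresc. F} and Lemma \ref{Lema 3}), letting $R\to\infty$ and then $T\to\infty$ in a three-$\varepsilon$ argument yields $(ii)$.

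The step I expect to be the main obstacle is this tail estimate for $(ii)$: since the Palais--Smale level $c$ is arbitrary, $(u_n)$ may concentrate, so $\int_{\mathbb{R}^2\setminus B_R(0)}A(x)f(u_n)u_n$ need not be uniformly small and one cannot simply argue tightness for $f(u_n)u_n$ in the presence of critical exponential growth; what rescues the statement for $F$ is precisely the one-power gain provided by $(f_2)$ on the region where $u_n$ is large (which produces the harmless factor $1/T$), combined with the compactness $H\hookrightarrow L_A^2(\mathbb{R}^2)$ on the region where $u_n$ is bounded.
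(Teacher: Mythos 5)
Your proposal is correct and follows essentially the same route as the paper: the combination $I(u_n)-\theta_0^{-1}I'(u_n)u_n$ with Lemma \ref{Lema 9}$(i)$ and $(m_1)$ for boundedness, Lemma \ref{Lema 10} for $(i)$, and for $(ii)$ a ball/tail decomposition in which the tail is controlled by splitting according to the size of $u_n$, using $(f_2)$ (via the bound on $\int A f(u_n)u_n$) on the large set and the $L_A^2$-convergence on the small set. The only differences are cosmetic — you apply Lemma \ref{Lema 10} to $F$ directly on $B_R(0)$ where the paper uses a dominating $L^1$ function extracted from $(i)$, and your truncation bookkeeping in the tail is slightly different — but the key estimates coincide.
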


\begin{proof}
By using Lemma \ref{Lema 9}$(i)$ and $(f_3)$, we get
$$
 c + o(1) + \left\|u_n\right\| \geq    I(u_n) - \dfrac{1}{\theta_0}I'(u_n)u_n  \geq  \left(\dfrac{\theta_0 - 4}{4\theta_0}\right)m_0\left\|u_n\right\|^2,
$$
as $n\to \infty$, where $m_0$ is given in hypothesis $(m_1)$. Since $\theta_0>4$ and $m_0>0$, the above inequality implies that the sequence $(u_n)$ is bounded in $H$.

Let $\Omega \subset \mathbb{R}^2$ be a bounded domain. Since $u_n \rightharpoonup u$ weakly in $H$, it follows that $u_n \rightarrow u$ in $L^1(\Omega)$, up to a subsequence. Moreover, since $I'(u_n)u_n \rightarrow 0$ as $n \rightarrow \infty$, we get
\begin{equation}\label{estim. norma L1 Af(u_n)u_n)}
\int_{\Omega} |f(u_n)u_n| \leq \int_{\mathbb{R}^2} A(x)f(u_n)u_n = m(\left\|u_n\right\|^2)\left\|u_n\right\|^2 - I'(u_n)u_n \leq C_1.
\end{equation}
By \eqref{cresc. F}, $f(u_n),\,f(u) \in L^1(\Omega)$ and therefore we conclude from Lemma \ref{Lema 10} that $f(u_n) \rightarrow f(u)$ in $L^1(\Omega)$. But
$$\int_{\Omega} A(x)|f(u_n) - f(u)| \leq \left\|A\right\|_{L^{\infty}(\Omega)}\int_{\Omega} |f(u_n) - f(u)| \longrightarrow 0,$$
which proves $(i)$. For the second item we take $r>0$ and use $(i)$ to obtain $h \in L^1(B_r(0))$ such that $A(x)f(u_n(x)) \leq h(x)$ for a.e. $x \in B_r(0)$. So, by using $(f_2)$ we get
\begin{eqnarray*}
A(x)F(u_n(x)) &\leq& \left\|A\right\|_{L^{\infty}(B_r(0))}\max_{s \in [0,s_0]}F(s) + K_0A(x)f(u_n(x)) \\
              &\leq& \left\|A\right\|_{L^{\infty}(B_r(0))}F(s_0) + K_0h(x)
\end{eqnarray*}
for a.e. $x \in B_r(0)$. Since we may assume that $u_n(x) \rightarrow u(x)$ for a.e. $x \in \mathbb{R}^2$ and $F$ is continuous, by Lebesgue's Theorem we obtain 
$$\int_{B_r(0)} A(x)F(u_n) \longrightarrow \int_{B_r(0)} A(x)F(u).$$
Thus, in order to conclude the proof of item $(ii)$, it is enough to show that, given $\delta>0$, there exists $r>0$ such that:
\begin{equation}\label{integrais AF(.) fora da bola}
\int_{\mathbb{R}^2 \backslash B_r(0)} A(x)F(u_n) < \delta, \ \ \forall \ n \in \mathbb{N}; \ \ \int_{\mathbb{R}^2 \backslash B_r(0)} A(x)F(u) < \delta.
\end{equation}
Since  $A(\cdot)F(u)$ is integrable, the second inequality holds for $r>0$ large. For the first one, we can use $(f_2)$ and $(f_4^*)$ to write
$$F(s) \leq C_2|s|^2 + C_3f(s), \ \ \forall \ s \in \mathbb{R}.$$
Then, given $K>0$, by the above inequality, the embbeding $H \hookrightarrow L_A^3(\mathbb{R}^2)$, the boundedness of $(u_n)$ in $H$ and \eqref{estim. norma L1 Af(u_n)u_n)}, we have that
\begin{eqnarray*}
\int_{\{|u_n|>K\} \cap (\mathbb{R}^2 \backslash B_r(0))} A(x)F(u_n) &\leq& C_2\int_{\{|u_n|>K\} \cap (\mathbb{R}^2 \backslash B_r(0))} A(x)|u_n|^2 \\
&& + \ C_3\int_{\{|u_n|>K\} \cap (\mathbb{R}^2 \backslash B_r(0))} A(x)f(u_n) \\
&\leq& \dfrac{C_2}{K}\int_{\mathbb{R}^2} A(x)|u_n|^3  +  \dfrac{C_3}{K}\int_{\mathbb{R}^2} A(x)f(u_n)u_n \\
&\leq& \dfrac{C_4}{K}.
\end{eqnarray*}
Thus, we can choose $K$ large enough such that 
$$\int_{\{|u_n|>K\} \cap (\mathbb{R}^2 \backslash B_r(0))} A(x)F(u_n) < \dfrac{\delta}{2}, \ \ \forall \ n \in \mathbb{N}.$$
On the other hand, by inequality \eqref{cresc. F} with $q=2$, for $|s| \leq K$ we have that
\begin{eqnarray*}
F(s) \leq C_5|s|^2 + C_6|s|^2(e^{\alpha s^2} - 1) \leq \left(C_5 + C_6(e^{\alpha K^2} - 1)\right)|s|^2 \leq C_7|s|^2,
\end{eqnarray*}
where $\alpha>\alpha_0$ and $C_7=C_7(\alpha,K)>0$ are constants. Then
$$
\int_{\{|u_n| \leq K\} \cap (\mathbb{R}^2 \backslash B_r(0))} A(x)F(u_n) \leq C_7\int_{\{|u_n| \leq K\} \cap (\mathbb{R}^2 \backslash B_r(0))} A(x)|u_n|^2.
$$
Since $u_n \rightarrow u$ in $L_A^2(\mathbb{R}^2)$, there exists $g \in L^1(\mathbb{R}^2)$ such that $A(x)|u_n(x)|^2 \leq g(x)$ for a.e. $x \in \mathbb{R}^2$. So, by choosing $r>0$ large enough such that $C_7\int_{\mathbb{R}^2 \backslash B_r(0)} g(x) < \delta/2$, we have 
$$\int_{\{|u_n| \leq K\} \cap (\mathbb{R}^2 \backslash B_r(0))} A(x)F(u_n) < \dfrac{\delta}{2}, \ \ \forall \ n \in \mathbb{N}.$$
Combining the above estimates, we obtain \eqref{integrais AF(.) fora da bola}, which concludes the proof of the second item. 
\end{proof}

We are ready to prove Theorems \ref{Teorema exist. sol. caso zeta<1} and \ref{Teorema exist. sol. caso zeta=1}. \\

%%%%%%%%%%%%%%%%%%%%%%%%%%%%%%%%%%%%% Demonstração Teorema existência de solução caso zeta<1 
\noindent \textit{Proof of Theorem \ref{Teorema exist. sol. caso zeta<1}.} As previously observed there exists $(u_n) \subset H$ such that
\begin{equation}\label{seq. P-S nivel P.M.}
I(u_n) \longrightarrow c^* \ \ \textrm{and} \ \ I'(u_n) \longrightarrow 0,
\end{equation}
as $n \rightarrow \infty$. By Proposition \ref{Proposicao seq. de Palais-Smale}, this sequence is bounded in $H$ and therefore we may assume that, for some $u_0 \in H$, 
\begin{equation}\label{conv. seq. P-S nivel P.M.}
u_n \rightharpoonup u_0 \textrm{ weakly in } H, \ \ u_n \rightarrow u_0 \textrm{ in } L_A^2(\mathbb{R}^2).
\end{equation}

We claim that
\begin{equation}\label{I(u_0) nao-negativo}
I(u_0) \geq 0.
\end{equation}
Indeed, suppose by contradiction that $I(u_0) < 0$. Then $u_0 \neq 0$ and, defining $h(t) := I(tu_0)$, $t \geq 0$, we have that $h(0)=0$ and $h(1)<0$. Arguing as in the proof of Lemma \ref{Lema 5} we see that $h(t)>0$, for any $t>0$ small. Thus,  there exists $t_0 \in (0,1)$ such that 
$$
h(t_0) = \max_{t \in [0,1]} h(t) = \max_{t \in [0,1]} I(tu_0), \ \ h'(t_0) = I'(t_0u_0)u_0 = 0.
$$
So, by definition of $c^*$ and Lemma \ref{Lema 9},
\begin{eqnarray*}
c^* \leq h(t_0) 
&=& h(t_0) - \dfrac{1}{4}h'(t_0)t_0 \\
&=& \dfrac{1}{2}M(\left\|t_0u_0\right\|^2) - \dfrac{1}{4}m(\left\|t_0u_0\right\|^2)\left\|t_0u_0\right\|^2 \\
&& + \ \dfrac{1}{4}\int_{\mathbb{R}^2} A(x)\left(f(t_0u_0)t_0u_0 - 4F(t_0u_0)\right) \\
&<& \dfrac{1}{2}M(\left\|u_0\right\|^2) - \dfrac{1}{4}m(\left\|u_0\right\|^2)\left\|u_0\right\|^2 \\
&& + \ \dfrac{1}{4}\int_{\mathbb{R}^2} A(x)\left(f(u_0)u_0 - 4F(u_0)\right).
\end{eqnarray*}
From this inequality, the lower semicontinuity of the norm, Fatou's Lemma and \eqref{seq. P-S nivel P.M.}, it follows that
\begin{eqnarray*}
c^* 
&<& \liminf_{n\rightarrow\infty} \left(\dfrac{1}{2}M(\left\|u_n\right\|^2) - \dfrac{1}{4}m(\left\|u_n\right\|^2)\left\|u_n\right\|^2\right) \\
&& + \ \dfrac{1}{4}\liminf_{n\rightarrow\infty} \int_{\mathbb{R}^2} A(x)\left(f(u_n)u_n - 4F(u_n)\right) \\
&\leq& \liminf_{n\rightarrow\infty} \left(I(u_n) - \dfrac{1}{4}I'(u_n) \right)=c^*,
\end{eqnarray*} 
which is absurd. Therefore, inequality \eqref{I(u_0) nao-negativo} holds. 

Now we will show that $I'(u_0) = 0$ and $I(u_0) = c^*$. Let $\rho_0 \geq 0$ such that $\left\|u_n\right\| \rightarrow \rho_0$. Clearly $\left\|u_0\right\| \leq \rho_0$ and we shall prove that the equality holds. Suppose, by contradiction, that $\left\|u_0\right\| < \rho_0$. Defining $v_n := u_n/\left\|u_n\right\|$ and $v_0 := u_0/\rho_0$, we have that $v_n \rightharpoonup v_0$ weakly in $H$ and $\left\|v_0\right\|<1$. So, by Corollary \ref{Corolario 1}, it follows that  
\begin{equation}\label{versao des. Tru-Mos para seq. v_n}
\sup_n\int_{\mathbb{R}^2} A(x)|u_n - u_0|^q(e^{pv_n^2} - 1) < \infty, \ \ \forall \ q>0, \ \ \forall \ p < \dfrac{4\pi\zeta^2}{1-\left\|v_0\right\|^2}.
\end{equation}
On the other hand, by using \eqref{seq. P-S nivel P.M.}, Proposition \ref{Proposicao seq. de Palais-Smale}$(ii)$, Proposition \ref{estim. nivel PM caso zeta<1}, \eqref{I(u_0) nao-negativo} and hypothesis $(m_2)$, we have that
\begin{eqnarray*}
M(\rho_0^2) &=& \displaystyle \lim_{n\rightarrow\infty} M(\left\|u_n\right\|^2) = \displaystyle \lim_{n\rightarrow\infty} 2\left(I(u_n) + \int_{\mathbb{R}^2} A(x)F(u_n)\right) \\
			&=& \displaystyle 2c^* + 2\int_{\mathbb{R}^2} A(x)F(u_0) = 2c^* + M(\left\|u_0\right\|^2) - 2I(u_0) \\
			&<& M\left(\dfrac{4\pi\zeta^2}{\alpha_0}\right) + M(\left\|u_0\right\|^2) 
			\leq M\left(\dfrac{4\pi\zeta^2}{\alpha_0} + \left\|u_0\right\|^2\right).
\end{eqnarray*}
Since $M$ is increasing, it follows that $\rho_0^2 < (4\pi\zeta^2/\alpha_0) + \left\|u_0\right\|^2$. Hence, by observing that $\rho_0^2 = (\rho_0^2 - \left\|u_0\right\|^2)/(1 - \left\|v_0\right\|^2)$, we get
$$\alpha_0\rho_0^2 < \dfrac{4\pi\zeta^2}{1 - \left\|v_0\right\|^2}.$$
Then, there exists $\eta>0$ such that $\alpha_0\left\|u_n\right\|^2 < \eta < 4\pi\zeta^2/(1 - \left\|v_0\right\|^2)$ for any $n$ large enough. Thus, we can choose $r \in (1,2)$ close to 1 and $\alpha>\alpha_0$ close to $\alpha_0$ such that we still have $r\alpha\left\|u_n\right\|^2 < \eta < 4\pi\zeta^2/(1 - \left\|v_0\right\|^2)$ and, by \eqref{versao des. Tru-Mos para seq. v_n},
\begin{eqnarray*}
\int_{\mathbb{R}^2} A(x)|u_n - u_0|^{2-r}(e^{r\alpha u_n^2} - 1) 
&=& \int_{\mathbb{R}^2} A(x)|u_n - u_0|^{2-r}(e^{r\alpha\left\|u_n\right\|^2 v_n^2} - 1) \\ 
&\leq& \int_{\mathbb{R}^2} A(x)|u_n - u_0|^{2-r}(e^{\eta v_n^2} - 1) \leq C_1,
\end{eqnarray*}
for any $n$ large. Therefore, by using inequality \eqref{cresc. F} with $q=1$, H\"older's inequality, $H \hookrightarrow L_A^2(\mathbb{R}^2)$, Lemma \ref{Lema 2}$(i)$ and \eqref{conv. seq. P-S nivel P.M.}, we obtain 
\begin{eqnarray*}
&& \left|\int_{\mathbb{R}^2} A(x)f(u_n)(u_n - u_0)\right| \leq \\
&& \leq C_2\int_{\mathbb{R}^2} A(x)|u_n||u_n-u_0| + \ C_3\int_{\mathbb{R}^2} A(x)|u_n - u_0|(e^{\alpha u_n^2} - 1) \\ 
&& = C_2\int_{\mathbb{R}^2} \sqrt{A(x)}|u_n|\sqrt{A(x)}|u_n-u_0| \\ 
&& \ \ \ \ + \ C_3\int_{\mathbb{R}^2} (A(x)|u_n - u_0|^2)^{(r-1)/r}(A(x)|u_n - u_0|^{2-r})^{1/r}(e^{\alpha u_n^2} - 1) \\
&& \leq C_4\left\|u_n\right\|\left\|u_n-u_0\right\|_{L_A^2(\mathbb{R}^2)} \\ 
&& \ \ \ \ + \ C_3\left\|u_n-u_0\right\|_{L_A^2(\mathbb{R}^2)}^{2(r-1)/r}\left(\int_{\mathbb{R}^2}A(x)|u_n - u_0|^{2-r}(e^{r\alpha u_n^2} - 1)\right)^{1/r} \\
&& \leq C_5\left\|u_n-u_0\right\|_{L_A^2(\mathbb{R}^2)} + C_6\left\|u_n-u_0\right\|_{L_A^2(\mathbb{R}^2)}^{2(r-1)/r} \longrightarrow 0,
\end{eqnarray*}
as $n \rightarrow \infty$. Since $I'(u_n)(u_n-u_0) \rightarrow 0$ as $n \rightarrow \infty$, we conclude that 
\begin{eqnarray*}
0 &=& \lim_{n \rightarrow \infty} \left(I'(u_n)(u_n-u_0) + \int_{\mathbb{R}^2} A(x)f(u_n)(u_n - u_0)\right) \\
  &=& \lim_{n \rightarrow \infty} m(\left\|u_n\right\|^2)\left\langle u_n,u_n - u_0\right\rangle_H \\
  &=& m(\rho_0^2)(\rho_0^2 - \left\|u_0\right\|^2) \\
  &>& 0,
\end{eqnarray*}
which does not make sense. Thus, we have that $\left\|u_0\right\| = \rho_0 = \lim_{n\rightarrow\infty} \left\|u_n\right\|$ and therefore $u_n \rightarrow u_0$ strongly in $H$. Since $I \in C^1(H,\mathbb{R})$, from \eqref{seq. P-S nivel P.M.} we conclude that $I(u_0) = c^* \neq 0$ and $I'(u_0) = 0$. Recalling that $f(s)=0$, for $s \leq 0$, we can use Lemma \ref{Lema 8} to conclude that $u_0 \geq 0$ is a ground state solution. \hfill $\Box$ \\

%%%%%%%%%%%%%%%%%%%%%%%%%%%%%%%%%%%%% Demonstração Teorema existência de solução caso zeta=1 
\noindent \textit{Proof of Theorem \ref{Teorema exist. sol. caso zeta=1}.} It is sufficient to argue as in the the proof of Theorem \ref{Teorema exist. sol. caso zeta<1}, considering now $\zeta=1$ and using Proposition \ref{estim. nivel PM} instead of Proposition \ref{estim. nivel PM caso zeta<1}. \hfill $\Box$ \bigskip

From now on we suppose that $m \equiv 1$. Hence, the equation in $(P)$ becomes the Schr\"odinger equation
$$
-\Delta u + b(x)u = A(x)f(u) \ \ \ \textrm{in} \ \ \ \mathbb{R}^2.
\leqno{(\widehat{P})}
$$
The energy functional associated to this problem is given by
\begin{equation}\label{func. energia eq. Schr.}
J(u) := \dfrac{1}{2}\left\|u\right\|^2 - \int_{\mathbb{R}^2} A(x)F(u), \ \ u \in H.
\end{equation}
Under hypotheses $(f_1)$, $(\widehat{f_3})$ and $(f_4^*)$, we can prove  that $J \in C^1(H,\mathbb{R})$,
\begin{equation}\label{deriv. func. energia eq. Schr.}
J'(u)v = \int_{\mathbb{R}^2}(\nabla u \cdot \nabla v + b(x)uv) - \int_{\mathbb{R}^2} A(x)f(u)v, \ \ \forall \ u,v \in H,
\end{equation}
and $J$ has the geometry of Mountain Pass Theorem. This  ensure the existence of a sequence $(u_n) \subset H$ such that
\begin{equation}\label{seq. P-S nivel P.M. func. J}
J(u_n) \longrightarrow c^{**} \ \ \textrm{and} \ \ J'(u_n) \longrightarrow 0
\end{equation}
as $n \rightarrow \infty$, where
$$c^{**} := \inf_{\lambda\in\Lambda}\max_{t\in[0,1]} J(\lambda(t)) > 0$$
and $\Lambda := \{\lambda \in C([0,1],H) : \lambda(0)=0 \ \textrm{and} \ J(\lambda(1))<0\}$.

Evidently, estimates for the minimax level $c^{**}$ analogous to that of Section \ref{estim. minimax} are valid, with hypotheses $(\widehat{f_3})-(\widehat{f_6})$ instead of $(f_3)-(f_6)$, where necessary. Under hypotheses $(f_1)$, $(f_2)$, $(\widehat{f_3})$ and $(f_4^*)$, we also obtain the same conclusions of Proposition \ref{Proposicao seq. de Palais-Smale} for the functional $J$. 

\bigskip

%%%%%%%%%%%%%%%%%%%%%%%%%%%%%%%%%%%%% Demonstração Teorema existência de solução eq. Schr. caso zeta<1 
\noindent \textit{Proof of Theorem \ref{Teo. exist. sol. eq. Schr. caso zeta<1}.} Let $(u_n) \subset H$ be the sequence given in \eqref{seq. P-S nivel P.M. func. J}. As in the proof of Theorem \ref{Teorema exist. sol. caso zeta<1}, the boundedness of $(u_n)$ in $H$ implies on the existence of $u_0 \in H$ such that, up to a subsequence, 
\begin{equation}\label{conv. seq. P-S nivel P.M. func. J}
u_n \rightharpoonup u_0 \textrm{ weakly in } H, \ \ u_n \rightarrow u_0 \textrm{ in } L_A^2(\mathbb{R}^2).
\end{equation}
Moreover, as we learned from the proof of Proposition \ref{Proposicao seq. de Palais-Smale}, we have that $Af(u_n) \rightarrow Af(u_0)$ in $L^1(\Omega)$, for any bounded domain $\Omega \subset \mathbb{R}^2$. By this, by the weak convergence in \eqref{conv. seq. P-S nivel P.M. func. J} and the convergence $J'(u_n) \rightarrow 0$, we get
$$J'(u_0)\phi = \left\langle u_0,\phi \right\rangle_H - \int_{\mathbb{R}^2} A(x)f(u_0)\phi = 0, \ \ \forall \ \phi \in C_0^{\infty}(\mathbb{R}^2).$$
By the same arguments of \cite[Theorem 3.22]{AdaFou}, we can verify that $C_0^{\infty}(\mathbb{R}^2)$ is dense in $H$. Hence $J'(u_0)u_0 = 0$. Since, by $(\widehat{f_3})$, we have $J(u_0) \geq (1/\widehat{\theta_0})J'(u_0)u_0$, it follows that $J(u_0) \geq 0$. Hence, we can use the estimate $c^{**} < (2\pi\zeta^2)/\alpha_0$ and proceed as in the proof of Theorem \ref{Teorema exist. sol. caso zeta<1}. \hfill $\Box$ \\

%%%%%%%%%%%%%%%%%%%%%%%%%%%%%%%%%%%%% Demonstração Teorema existência de solução eq. Schr. caso zeta=1 
\noindent \textit{Proof of Theorem \ref{Teo. exist. sol. eq. Schr. caso zeta=1}.} It is sufficient to argue as in the the proof of Theorem  Theorem \ref{Teo. exist. sol. eq. Schr. caso zeta<1}, considering now $\zeta=1$ and using the estimate $c^{**} < (2\pi)/\alpha_0.$ \hfill $\Box$

\end{document}